\newcommand{\abs}[1]{\left| #1 \right|}
\newcommand{\norme}[1]{\left\| #1 \right\|}
\newcommand{\acc}[1]{\left\{ #1 \right\}}
\newcommand{\croch}[1]{\left[ #1 \right]}
\newcommand{\R}{\mathbb{R}}
\newcommand{\N}{\mathbb{N}}
\newcommand{\Z}{\mathbb{Z}}
\newcommand{\Proba}{\mathbb{P}}
\newcommand{\Probapq}{\phi_{n,\,p,\,q}^1}
\newcommand{\Probapqxi}{\phi_{n,\,p,\,q}^{\xi}}
\newcommand{\Probap}{\phi_{n,\,p,\,2}^1}
\newcommand{\Probapqf}{\phi_{n,\,p,\,q}^0}
\newcommand{\Probapn}{\phi_{n,\,p_n,\,2}^1}
\newcommand{\Ed}{\mathbb{E}^2}
\newcommand{\En}{\mathbb{E}_n}
\newcommand{\Ent}[1]{\left\lfloor #1\right\rfloor}
\newcommand{\Ceil}[1]{\left\lceil #1\right\rceil}
\newcommand{\souspreuve}[1]{\paragraph{#1}}
\newcommand{\suitepreuve}[1]{\paragraph{#1}}
\newcommand{\Czero}{\mathcal{C}_0}
\newcommand{\Cun}{\mathcal{C}_1}
\newcommand{\Clusdec}{\mathcal{C}_n^-}
\newcommand{\restrict}[1]{\raisebox{-.5ex}{$|$}_{#1}} 
\newcommand{\demi}{\frac{1}{2}}
\newcommand{\cvninfty}{\stackrel{n\rightarrow\infty}{\longrightarrow}}
\newcommand{\limn}{\lim\limits_{n\rightarrow\infty}}
\newcommand{\limsupn}{\limsup\limits_{n\rightarrow\infty}}
\newcommand{\liminfn}{\liminf\limits_{n\rightarrow\infty}}
\newcommand{\eqninfty}{\stackrel{n\rightarrow\infty}{\sim}}
\newcommand\numberthis{\addtocounter{equation}{1}\tag{\theequation}}
\newcommand{\diam}{{\rm diam}}
\newcommand{\quadet}{\quad\text{and}\quad}
\newcommand{\qquadet}{\qquad\text{and}\qquad}
\newcommand{\quadou}{\quad\text{where}\quad}
\newcommand{\qquadou}{\qquad\text{where}\qquad}
\newcommand{\quadavec}{\quad\text{with}\quad}
\newcommand{\qquadavec}{\qquad\text{with}\qquad}
\newcommand{\quadimplique}{\quad\Rightarrow\quad}
\newcommand{\qquadimplique}{\qquad\Rightarrow\qquad}
\newcommand{\modulo}[1]{\,\left[\text{mod. }#1\right]}
\newcommand{\connecte}{\stackrel{\omega}{\longleftrightarrow}}
\newcommand{\Edge}[1]{\mathbb{E}\croch{#1}}
\newcommand{\guillemets}[1]{``#1''}
\newcommand{\Tns}{T_n^\star}
\newcommand{\Mn}{\mathcal{M}_n}
\newcommand{\omegaH}{\omega_{H(\omega)}}
\newcommand*{\biggg}[1]{{\hbox{$\left#1\vbox to20.5\p@{}\right.\n@space$}}}
\newcommand*{\Biggg}[1]{{\hbox{$\left#1\vbox to23.5\p@{}\right.\n@space$}}}
\newtheorem{theorem}{Theorem}
\newtheorem{proposition}{Proposition}
\newtheorem{lemma}{Lemma}
\newtheorem{definition}{Definition}
\newcommand{\correct}[1]{#1}
\begin{document}

\title{A planar Ising model of self-organized criticality}

\author{Nicolas Forien
	\thanks{\'Ecole normale sup\'erieure, CNRS, PSL University, 75005, Paris, France}\ 
	\thanks{Universit\'e Paris-Saclay, CNRS, Laboratoire de math\'ematiques d'Orsay, 91405, Orsay, France}\ 
	\thanks{Aix Marseille Univ, CNRS, Centrale Marseille, I2M, Marseille, France}\ 
	\thanks{I wish to thank Rapha\"el Cerf for suggesting this problem, and for fruitful discussions.}}

\maketitle

\begin{abstract}
We consider the planar Ising model in a finite square box and we replace the temperature parameter with a function depending on the magnetization. This creates a feedback from the spin configuration onto the parameter, which drives the system towards the critical point. Using the finite-size scaling results of~\cite{CerfMessikh2011}, we show that, when the size of the box grows to infinity, the temperature concentrates around the critical temperature of the planar Ising model on the square lattice.
\end{abstract}

\tableofcontents

\section{Introduction}

\subsection{Definition of the model and convergence result}

In this article, we build a simple variant of the two-dimensional Ising model which presents a phenomenon of \guillemets{self-organized criticality}. To define this model, we consider square boxes~$\Lambda(n)\subset\Z^2$ of side~$n$, we choose a real parameter~$a>0$, and we set, for any spin configuration~$\sigma:\Lambda(n)\rightarrow\acc{-,+}$,
$$T_n(\sigma)\ =\ \frac{\big(m(\sigma)\big)^2}{n^{2a}}\ =\ \frac{1}{n^{2a}}\Bigg(\,\sum_{x\in\Lambda(n)}{\sigma(x)}\,\Bigg)^2\,.$$
We then define the probability distribution
$$\mu_n\ :\ \sigma\in\acc{-,+}^{\Lambda(n)}\ \longmapsto\ \frac{1}{Z_n}\mu_{n,\,T_n(\sigma)}^+(\sigma)\,,$$
where~$Z_n$ is the appropriate normalization constant and~$\mu_{n,\,T}^+$ is the standard Ising measure at temperature~$T$ \correct{with}~$+$ boundary conditions on the box~$\Lambda(n)$ \correct{and} no external magnetic field (see section~\ref{section_def_Ising} for the precise definition). In our model, the fixed temperature~$T$ of the Ising model is replaced with this function~$T_n$ of the configuration itself, creating a feedback from the configuration onto the temperature parameter. The goal is to obtain a model whose temperature concentrates around the critical temperature~$T_c$ of the Ising model when the size of the box grows to infinity, without having to tune a parameter to a precise critical value. We prove, in dimension~$2$, the following convergence result:

\begin{theorem}
\label{Ising_thm_CV}
If the parameter~$a$ is chosen such that~$81/41<a<2$, then the law of~$T_n$ under~$\mu_n$ converges to~$\delta_{T_c}$ when~$n\rightarrow\infty$, and we have the following estimate on the convergence speed:
\begin{equation}
\label{estimee_vitesse}
\forall\varepsilon>0\qquad
\limsupn\,\frac{1}{n}\,\ln\mu_n\Big(\,\abs{T_n-T_c}\geqslant\varepsilon\,\Big)\ <\ 0\,.
\end{equation}
\end{theorem}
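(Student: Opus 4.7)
The key observation is that $T_n(\sigma)$ depends on $\sigma$ only through $\abs{m(\sigma)}$, so for every admissible value $t$ in the range of $T_n$,
\[
\mu_n(T_n = t) \;=\; \frac{1}{Z_n}\, \mu_{n,t}^+\!\bigl(\abs{m(\sigma)} = n^a\sqrt{t}\bigr).
\]
Hence the concentration of $T_n$ under $\mu_n$ reduces to asking, for each temperature $t$, how likely it is that the standard $+$-Ising model at temperature $t$ hits the prescribed magnetization value $n^a\sqrt{t}$. The plan is to combine an exponential upper bound away from $T_c$ with a subexponential lower bound at a single well-chosen temperature $t_n$ near $T_c$, and to conclude by taking ratios.

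For the upper bound I would show that, for every $\varepsilon>0$, there is $c(\varepsilon)>0$ such that $\mu_{n,t}^+\bigl(\abs{m}=n^a\sqrt{t}\bigr)\leq e^{-c(\varepsilon)n}$ uniformly for $\abs{t-T_c}\geq\varepsilon$. If $t\leq T_c-\varepsilon$, the spontaneous magnetization $m^*(t)$ is strictly positive, so $\abs{m}$ is typically of order $n^2$, whereas $n^a\sqrt{t}=o(n^2)$ since $a<2$; the classical surface-order large-deviation estimates for the $+$ phase give the required bound. If $t\geq T_c+\varepsilon$, exponential decay of correlations makes $\abs{m}$ concentrate on scale $n$, and since $a>1$ the event $\{\abs{m}=n^a\sqrt{t}\}$ is an atypically large deviation, again controlled by $e^{-c(\varepsilon)n}$ through standard random-cluster large-deviation estimates.

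The delicate point is the matching lower bound. I would set $t_n=T_c-\delta_n$ with $\delta_n\sim n^{-8(2-a)}$, so that the critical scaling $m^*(T_c-\delta)\asymp \delta^{1/8}$ makes the typical magnetization $m^*(t_n)n^2$ match the target $n^a\sqrt{t_n}$ up to constants. Provided $a>81/41$, the deviation $\delta_n$ remains above the threshold below which the finite-size scaling estimates of~\cite{CerfMessikh2011} cease to be sharp, putting us in a near-critical but still ordered regime which their results describe precisely: the distribution of $\abs{m}$ concentrates around $m^*(t_n)n^2$ with fluctuations of width polynomial in $n$ (heuristically of order $n\,\chi(t_n)^{1/2}$ with $\chi(\delta)\sim \delta^{-7/4}$). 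The probability of hitting any prescribed integer within one standard deviation of the mean is then at least $1/\mathrm{poly}(n)=e^{-o(n)}$.

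Combining the two estimates, the total mass $\mu_n(\abs{T_n-T_c}\geq\varepsilon)\cdot Z_n$ is bounded above by $O(n^2)\cdot e^{-c(\varepsilon)n}$ (summing over the at most $n^2$ admissible values of $t$), while $Z_n$ itself is bounded below by the single contribution at $t_n$, hence by $e^{-o(n)}$. The resulting ratio is $e^{-c(\varepsilon)n+o(n)}$, which yields~(\ref{estimee_vitesse}). The main obstacle is the lower bound: one must convert the ensemble-level scaling statements of~\cite{CerfMessikh2011} into a pointwise lower bound on the probability of hitting one specific integer value of $\abs{m}$, uniformly in $n$, and the numerical threshold $a>81/41$ reflects exactly how far from $T_c$ one must stand so that those finite-size scaling estimates remain strong enough to absorb the polynomial losses.
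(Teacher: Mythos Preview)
Your overall architecture matches the paper's: decompose by the value of $|m|$, bound the numerator exponentially for $|T-T_c|\geqslant\varepsilon$, and bound $Z_n$ from below at one well-chosen near-critical temperature. Your scaling $\delta_n\sim n^{-8(2-a)}$ is correct, and the origin of the threshold $81/41$ is identified correctly.

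The lower-bound step, however, contains a genuine gap. You assert that ``the probability of hitting any prescribed integer within one standard deviation of the mean is then at least $1/\mathrm{poly}(n)$,'' but this is exactly what must be proved, and no local-CLT statement of this kind follows from~\cite{CerfMessikh2011}. Those results only say that $|\Mn|$ (the number of vertices connected to the boundary in the FK model) lies in a window of width $O(n^a)$ around $\theta(p_n)|\Lambda(n)|$, together with a bound on the largest non-boundary cluster. Nothing there yields that a \emph{single} value of $|m|$ is hit with probability $\geqslant n^{-c}$; the fluctuation heuristic $n\,\chi(t_n)^{1/2}$ is not available as a rigorous local limit theorem in this near-critical regime.

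The paper closes this gap by a constructive surgical argument in the FK model rather than any CLT. Starting from a typical configuration with $b_n\leqslant|\Mn|\leqslant 4n^a$, it closes a set of $O(n^{a/2})$ edges (found via a pigeonhole argument on annuli plus the geometric lemma of~\cite{ModelsofSOCinPerco}) so as to force $|\Mn|$ to hit the midpoint $\lceil(|\Mn|+b_n)/2\rceil$. The ``halfway cut'' trick is essential: the disconnected piece is then forced to take minus spins through the Edwards--Sokal coupling, so that its contribution together with that of $\Mn$ equals exactly $b_n$. The remaining clusters are handled size-by-size, and a reservoir of at least $n^{33/2-8a}$ unit clusters absorbs the residual (with a parity adjustment in the choice of $b_n$). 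This produces a lower bound $Z_n\geqslant\exp\big(-K(\ln n)\,n^{\rho}\big)$ with $\rho=\max(a/2,\,33/2-8a)<1$, which is $e^{-o(n)}$ as required. Your heuristic skips this entire construction; without it the argument does not go through.

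A minor additional point: for the upper bound you need uniformity in $T$ over $[0,T_c-\varepsilon]$ and $[T_c+\varepsilon,\infty)$. Since $|m|$ is not monotone in $T$, pointwise large-deviation estimates do not directly yield this; the paper obtains the uniformity by passing to the FK representation, where stochastic monotonicity in $p$ is available.
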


We do not think that this constant~$81/41$ is optimal, since it comes from the hypotheses of~\cite{CerfMessikh2011}, which are not deemed to be optimal. We explain in paragraph~\ref{section_dependance_messikh} how the exponent~$81/41$ would evolve if the results of~\cite{CerfMessikh2011} were to be improved.

\subsection{Self-organized criticality}

Most lattice models which present a phase transition have a particularly interesting behaviour at their critical point, with (conjectured or proven) properties of conformal invariance, self-similarity and power-law correlations, which turn out to be common within a large class of similar models in statistical mechanics. The physicists Per Bak, Chao Tang and Kurt Wiesenfeld noted in~\cite{BTW} that these universal features can be observed in various physical or biological systems. But, these properties being very specific to the critical point in a phase transition, it seems surprising to meet them by chance in nature, since it should be very unlikely to have the parameters of a physical system precisely tuned to their critical value.

According to~\cite{BTW}, an explanation is that some physical systems tend to be naturally attracted towards a critical state: this phenomenon is called~\guillemets{self-organized criticality}. Examples of self-critical models include the sandpile model~\cite{JAR}, forest fires~\cite{ForestFires}, avalanche processes~\cite{BF}, biological evolution~\cite{BDF}, neural systems~\cite{Neural}, or sociology~\cite{SocialPerco}, and self-organized criticality was recently experimentally observed in an ultra-cold atomic gas~\cite{SOCNature}. But these systems turn out to be very difficult to analyze rigorously and there are few self-critical models which are simple enough to be amenable to mathematical study but complex enough to enclose the relevant features of self-organized criticality.

A natural idea to obtain a self-critical model, described in~\cite{Sornette}, is to start with a model which exhibits a phase transition, and to introduce a feedback by replacing the control parameter (e.g., temperature) with a function of the configuration. If this feedback function is well chosen, it can drive the system towards its critical point. In~\cite{CG}, a simple model of self-organized criticality was built from the generalized Curie-Weiss Ising model by using this technique. In~\cite{ModelsofSOCinPerco}, we have defined a similar model constructed from Bernoulli percolation. Therefore, it was natural to try to extend this result to the Ising model, using the random-cluster representation. But the general technique of~\cite{ModelsofSOCinPerco} turned out to be hard to apply in the more general setting of FK-percolation, because of the non-poissonian way the edges become open when the percolation parameter is raised to the critical point from below~\cite{NearCriticalFKIsing}.

Therefore, we adopt a slightly different approach, and we use the results of~\cite{CerfMessikh2011} about the near-critical regime of the planar FK-Ising model. Hence, this proof is very specific to the two-dimensional setting, and we are only able to study a very small window for the parameter~$a$. An improvement of the hypotheses of~\cite{CerfMessikh2011} would enlarge our window, but since it only deals with the slightly supercritical regime (when~$p-p_c$ tends to zero but is positive), a different method would be needed to study the case of lower parameters~$a$.

\correct{Although we have only studied our model in the planar case, we expect that the result could be generalized in other dimensions, but this would require other ingredients. In particular, the case of high dimensions could probably be investigated using lace expansion techniques~\cite{HS90,Sakai07}.}

\subsection{Outline of the paper}

To control the deviations of the random variable~$T_n$, we write, for~$\varepsilon>0$,
\begin{align*}
\mu_n\Big(\,T_n\geqslant T_c+\varepsilon\,\Big)
\ &=\ \frac{1}{Z_n}\sum_{\sigma\in\acc{-,+}^{\Lambda(n)}}{\mathbb{1}\big[\,T_n(\sigma)\geqslant T_c+\varepsilon\,\big]\mu_{n,\,T_n(\sigma)}^+(\sigma)}\\
\ &=\ \frac{1}{Z_n}\sum_{b=0}^{n^2}{\mathbb{1}\croch{\,b^2/n^{2a}\geqslant T_c+\varepsilon\,}\sum_{\sigma\;:\;\abs{m(\sigma)}=b}{\mu_{n,\,b^2/n^{2a}}^+(\sigma)}}\\
\ &=\ \frac{1}{Z_n}\sum_{b=0}^{n^2}{\mathbb{1}\croch{\,b\geqslant n^{a}\sqrt{T_c+\varepsilon}\,}\mu_{n,\,b^2/n^{2a}}^+\Big(\,\abs{m}=b\,\Big)}\\
\ &\leqslant\ \frac{n^2+1}{Z_n}\sup_{T\geqslant T_c+\varepsilon}\mu_{n,\,T}^+\Big(\,\abs{m}\geqslant n^a\sqrt{T_c+\varepsilon}\,\Big)\,.\numberthis\label{outline_majo_surcritique}
\end{align*}
Similarly, we have, for~$\varepsilon>0$,
\begin{equation}
\label{outline_majo_souscritique}
\mu_n\Big(\,T_n\leqslant T_c-\varepsilon\,\Big)
\ \leqslant\ \frac{n^2+1}{Z_n}\sup_{T\leqslant T_c-\varepsilon}\mu_{n,\,T}^+\Big(\,\abs{m}\leqslant n^a\sqrt{T_c-\varepsilon}\,\Big)\,.
\end{equation}
Therefore, our strategy \correct{consists of} proving exponential decay results for
\begin{equation}
\label{sups_a_majorer}
\sup_{T\geqslant T_c+\varepsilon}\mu_{n,\,T}^+\Big(\,\abs{m}\geqslant A n^a\,\Big)
\qquadet
\sup_{T\leqslant T_c-\varepsilon}\mu_{n,\,T}^+\Big(\,\abs{m}\leqslant An^a\,\Big)
\end{equation}
for fixed~$A>0$ and~$\varepsilon>0$, when the size~$n$ of the box tends to infinity. This is done in sections~\ref{bigsec_surcritique} and~\ref{bigsec_souscritique}, after some general definitions and notations are introduced in section~\ref{bigsec_defs}. These exponential estimates are quite standard for a fixed temperature~$T\neq T_c$, but since we do not have a \correct{monotonicity} property with respect to~$T$ for the magnetization, uniform estimates on the intervals~$[0,\,T_c-\varepsilon]$ and~$[T_c+\varepsilon,\,+\infty)$ cannot be deduced from the pointwise exponential decay. Therefore, we work with the random-cluster representation of the Ising model, whose \correct{monotonicity} property helps us to obtain uniform exponential bounds for~(\ref{sups_a_majorer}).

But these exponential decay results are not enough to prove theorem~\ref{Ising_thm_CV}, since it could be the case that the denominator~$Z_n$ in~(\ref{outline_majo_surcritique}) and~(\ref{outline_majo_souscritique}) also decays exponentially. Thus, we need to show at least that~$Z_n$ does not decay as fast as the two quantities in~(\ref{sups_a_majorer}). This is the key point of our proof, detailed in section~\ref{bigsec_minoZn}. In paragraph~\ref{section_heuristique_Zn} below, we present the strategy to obtain this lower bound on the partition function, which is close to the strategy followed in~\cite{ModelsofSOCinPerco}. But here, instead of building a monotone coupling of configurations and looking for a fixed point, we guess the value of this fixed point \correct{using} the results of finite-size scaling in~\cite{CerfMessikh2011}. \correct{These results, which are the crucial ingredient in our proof,} indicate which speed of convergence to~$p_c$ is required to obtain a given magnetization. \correct{But the estimates of~\cite{CerfMessikh2011} are very specific to the two-dimensional case, since they rely on the exact computations of Onsager~\cite{Onsager}.} Therefore, our method is very specific to the two-dimensional Ising model, and an extension to higher dimensions would require other ingredients.

In all three regimes (supercritical in section~\ref{bigsec_surcritique}, subcritical in section~\ref{bigsec_souscritique}, and near-critical in section~\ref{bigsec_minoZn}), we control the magnetization of the Ising model with the help of the random-cluster model, which is linked to the Ising model through the Edwards-Sokal coupling (see paragraph~\ref{thm_Edwards_Sokal}). In an Ising configuration obtained from a FK-percolation configuration with this coupling, the magnetization is the result of two factors: on the one hand, the number of vertices connected to the boundary of the box, and on the other hand, the fluctuations coming from the spins attributed to the clusters which do not touch the boundary of the box. Therefore, we have to monitor both factors in the random-cluster model to obtain a control on the magnetization in the related Ising model.

To control the dependence of our result on the hypotheses of~\cite{CerfMessikh2011}, we prove our lower bound on~$Z_n$ in section~\ref{bigsec_minoZn} under some finite-size scaling assumptions, which are proved in~\cite{CerfMessikh2011} but with hypotheses which are not deemed optimal. We show in section~\ref{bigsec_FSS} how these scaling assumptions follow from~\cite{CerfMessikh2011}. This allows us to discuss how our admissible range for the parameter~$a$ would improve if the postulates were extended to a broader window of percolation parameters.

\subsection{Improvement of our exponent under finite-size scaling assumptions}

\label{section_dependance_messikh}

Our proof relies on the results of~\cite{CerfMessikh2011}, which give information on the set of the sites connected to the boundary in the planar FK-Ising model in a joint regime where~$p\rightarrow p_c$ and~$n\rightarrow\infty$ simultaneously, with~\smash{$p-p_c\gg n^{-8/41}$}.
This value of~$8/41$ is not deemed optimal, since it is believed that the FK-percolation model should have a supercritical behaviour as long as~$n$ is much larger than the correlation length, which scales like~$(p-p_c)^{-1}$~\cite{NearCriticalFKIsing}.
As we will see in paragraph~\ref{section_heuristique_Zn}, we will study FK-percolation in the regime~$p-p_c\ \sim\ n^{8a-16}$, hence our assumption~$a>81/41$, in order to have~$8a-16>-8/41$. To keep track of the influence of this condition on our exponent~$a$, we quote below which of the results of~\cite{CerfMessikh2011} are needed for our proof.

\begin{definition}
We say that an exponent~$s>0$ satisfies the finite-size scaling assumptions, which we will denote by~$\mathcal{FSS}(s)$, if for all~$K,\,\delta>0$ and for any real sequence~$p_n\in[0,1]$, we have
$$p_n-p_c(2)\ \eqninfty\ \frac{K}{n^s}
\correct{\qquadimplique}
\limn\Probapn\big(\mathcal{F}_n\big)\ =\ 1\,,$$
$$\correct{\text{with}\qquad}
\mathcal{F}_n\ =\ \acc{\,\begin{array}{c}
\abs{\Mn}\leqslant(1+\delta)\theta(p_n)\abs{\Lambda(n)},\\
\abs{\Mn\cap\Lambda(n_1)}\geqslant(1-\delta)\theta(p_n)\abs{\Lambda(n_1)}\\
\max\big\{\abs{C(x)}\,:\,x\centernot\longleftrightarrow\partial\Lambda(n)\big\}\leqslant n^{s+1/2}
\end{array}\,}\,,$$
where~$n_1=\Ent{5n/6}$,~$\phi_{n,\,p,\,q}^1$ is the finite-volume random-cluster measure with wired boundary conditions on the box~$\Lambda(n)$ and~$\Mn$ denotes the set of vertices connected to the boundary~$\partial\Lambda(n)$ of the box.
\end{definition}

We translate the results of~\cite{CerfMessikh2011} into the following proposition:

\begin{proposition}
\label{prop_messikh}
We have~$\mathcal{FSS}(s)$ for all~$s<8/41$.
\end{proposition}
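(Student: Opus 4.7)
The plan is to interpret each of the three events defining $\mathcal{F}_n$ as a direct translation of a result of~\cite{CerfMessikh2011}, valid for any $s<8/41$. The starting observation is that $s<8/41$ means that any sequence $p_n$ with $p_n-p_c(2)\sim K/n^s$ satisfies $p_n-p_c(2)\gg n^{-8/41}$, so we sit in the slightly supercritical window for which the near-critical finite-size scaling results of~\cite{CerfMessikh2011} are available under $\Probapn$. The probability of $\mathcal{F}_n$ converges to $1$ iff each of its three defining events does, so I would treat them separately and conclude by a union bound.

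For the bulk upper bound $\abs{\Mn}\leqslant(1+\delta)\theta(p_n)\abs{\Lambda(n)}$ and the sub-box lower bound $\abs{\Mn\cap\Lambda(n_1)}\geqslant(1-\delta)\theta(p_n)\abs{\Lambda(n_1)}$, I would invoke the quantitative law of large numbers established in~\cite{CerfMessikh2011} for the density of vertices connected to the boundary $\partial\Lambda(n)$. Their proof relies on a block argument (coarse graining at a scale proportional to the correlation length, which is of order $(p_n-p_c)^{-1}\sim n^s$, a scale much smaller than $n$ since $s<1$) together with the exact Onsager computation of the spontaneous magnetization, and it yields a density estimate that is uniform over the macroscopic sub-boxes sitting at linear distance from $\partial\Lambda(n)$. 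Since $n_1=\Ent{5n/6}$ is precisely such a centered sub-box, both density statements follow.

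For the control on finite clusters, the point is that in the slightly supercritical regime $p_n-p_c\sim K/n^s$ the correlation length $\xi(p_n)$ scales like $(p_n-p_c)^{-1}\sim n^s$, and the results of~\cite{CerfMessikh2011} (via a coarse graining / renormalization at this scale, combined with exponential decay in the dual model at the associated dual parameter) furnish a uniform bound of the form $\max\{\diam C(x):x\centernot\longleftrightarrow\partial\Lambda(n)\}\leqslant n^s(\log n)^{O(1)}$ with high probability under $\Probapn$. A finite cluster of diameter $d$ has cardinality at most $(2d+1)^2$, so this yields
\[
\max\big\{\abs{C(x)}\,:\,x\centernot\longleftrightarrow\partial\Lambda(n)\big\}\ \leqslant\ C\,n^{2s}(\log n)^{O(1)}\ \leqslant\ n^{s+1/2}
\]
for $n$ large enough, because $2s<s+1/2$ whenever $s<1/2$, which is certainly true under $s<8/41$. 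This gives the third event comfortably.

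The only real work is bookkeeping. I expect the main obstacle to be checking that the three estimates from~\cite{CerfMessikh2011} are all stated (or easily upgradable to a statement) uniform in the sequence $p_n$, and that the same window $p_n-p_c\sim K/n^s$ is admissible for all three simultaneously and for arbitrary $K,\delta>0$. Once that is verified, proposition~\ref{prop_messikh} is essentially a repackaging of the content of~\cite{CerfMessikh2011} into the form required downstream.
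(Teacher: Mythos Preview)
Your outline matches the paper's approach: each of the three events in $\mathcal{F}_n$ is derived from a corresponding result of~\cite{CerfMessikh2011}, and the condition $s<8/41$ is precisely what makes those results applicable (the paper makes this explicit by introducing an auxiliary exponent $a'$ with $5<a'<1/s-1/8$, which is possible iff $s<8/41$). The finite-cluster control is also handled as you suggest, via a diameter bound and squaring; the paper uses theorem~1 of~\cite{CerfMessikh2011} to produce a cluster crossing every sub-box of diameter $M=\Ent{n^{s/2+1/4}}$, whence every cluster in $\Clusdec$ has at most $M^2\leqslant n^{s+1/2}$ vertices.

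One step deserves more care than your sketch gives it. For the sub-box lower bound $\abs{\Mn\cap\Lambda(n_1)}\geqslant(1-\delta)\theta(p_n)\abs{\Lambda(n_1)}$, you assume that~\cite{CerfMessikh2011} supplies a density estimate for $\Mn$ uniform over centred macroscopic sub-boxes. But theorem~3 of~\cite{CerfMessikh2011}, as invoked in the paper, only yields a lower bound on $\max_{C\in\mathcal{C}_{n_1}}\abs{C}$ --- the size of the largest cluster \emph{inside} $\Lambda(n_1)$ --- and says nothing a priori about whether this cluster reaches $\partial\Lambda(n)$. The paper bridges this by first establishing the finite-cluster bound, then observing that $(1-\delta)\theta(p_n)\abs{\Lambda(n_1)}$ is of order $n^{2-s/8}\gg n^{s+1/2}$ (using $s<4/3$), so the large cluster in $\Lambda(n_1)$ is too big to belong to $\Clusdec$ and must therefore touch the boundary. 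Thus the finite-cluster bound feeds into the sub-box estimate; the three conditions are not proved independently, and this coupling is the one non-mechanical step you have glossed over.
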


The next theorem shows how the constant~$81/41$ would evolve if the results of~\cite{CerfMessikh2011} were to be improved. Together with proposition~\ref{prop_messikh}, it easily implies theorem~\ref{Ising_thm_CV}. We do not know whether the value~$31/16$ is optimal.

\begin{theorem}
\label{Ising_thm_CV_avec_dep_Messikh}
If~$a\in(31/16,\,2)$ is such that~$\mathcal{FSS}(16-8a)$ holds, then the law of~$T_n$ under~$\mu_n$ converges to~$\delta_{T_c}$ when~$n\to\infty$, and the estimate~(\ref{estimee_vitesse}) holds.
\end{theorem}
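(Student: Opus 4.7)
The plan is to follow the outline of section~1.3 and combine three ingredients: a uniform exponential upper bound on $\mu_{n,T}^+(\abs{m}\geqslant An^a)$ for $T\geqslant T_c+\varepsilon$, a uniform exponential upper bound on $\mu_{n,T}^+(\abs{m}\leqslant An^a)$ for $T\leqslant T_c-\varepsilon$, and a sub-exponential lower bound on the partition function $Z_n$. Once these three ingredients are in hand, the inequalities~(\ref{outline_majo_surcritique}) and~(\ref{outline_majo_souscritique}) immediately yield the announced estimate~(\ref{estimee_vitesse}) on $\mu_n(\abs{T_n-T_c}\geqslant\varepsilon)$, and the convergence in law follows at once.

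For the two uniform upper bounds I would go through the Edwards--Sokal coupling and the FK-Ising representation. The correspondence $p\mapsto T$ is monotone, so the supremum over $T\geqslant T_c+\varepsilon$ becomes a supremum over $p\leqslant p_c-\delta(\varepsilon)$, and FKG-type monotonicity of the random-cluster measure in $p$ reduces this supremum to a single sub-critical $p$; there, the classical exponential decay of connectivities bounds the number of sites connected to $\partial\Lambda(n)$, which in turn controls $\abs{m}$ since the non-boundary clusters carry mean-zero spin contributions. The sub-critical side is symmetric: at a single $p\geqslant p_c+\delta(\varepsilon)$, one shows that $\abs{\Mn}$ is close to $n^2$ outside an exponentially small event, which forces $\abs{m}\geqslant An^a$ on the complement. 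Going through the random-cluster model rather than directly through the Ising measure is essential here, since uniformity in $T$ follows from monotonicity in $p$, which is not available for $\mu_{n,T}^+$ as a function of $T$ on either interval.

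The real obstacle is the sub-exponential lower bound on $Z_n$. Since $Z_n=\sum_{b=0}^{n^2}\mu_{n,b^2/n^{2a}}^+(\abs{m}=b)$, it is enough to exhibit a single index $b^\star$ for which the corresponding term is of order $e^{-o(n)}$. Aiming for $T_n(\sigma)\approx T_c$, I would pick $b^\star$ of order $n^a\sqrt{T_c}$, so that $T^\star:=(b^\star)^2/n^{2a}$ lies within $O(n^{8a-16})$ of $T_c$ and the associated FK parameter satisfies $\abs{p^\star-p_c}\sim K\,n^{8a-16}$; this is exactly the scale covered by $\mathcal{FSS}(16-8a)$, and the hypothesis $a<2$ guarantees $16-8a>0$. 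Through Edwards--Sokal at $p^\star$, the magnetization decomposes as $m=\abs{\Mn}+X$, where $X=\sum_i\epsilon_i\abs{C_i}$ is a sum of independent symmetric $\pm\abs{C_i}$ random variables indexed by the clusters disjoint from $\partial\Lambda(n)$; under $\mathcal{FSS}(16-8a)$ the quantity $\abs{\Mn}$ concentrates at $\theta(p^\star)\abs{\Lambda(n)}$, which is of the required order $n^a$.

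It then remains to show that $\Proba\bigl(X=b^\star-\abs{\Mn}\bigr)\geqslant n^{-O(1)}$. Under $\mathcal{FSS}(16-8a)$ the clusters satisfy $\abs{C_i}\leqslant n^{s+1/2}$ with $s=16-8a$, so $X$ has variance at most $n^{s+1/2}\cdot\abs{\Lambda(n)}$ and fluctuation scale of order $n^{(s+5/2)/2}$. A Gaussian-type local central limit theorem, or a direct second-moment / anti-concentration estimate, should then yield the required polynomial lower bound, provided this fluctuation scale simultaneously dominates the granularity $\max_i\abs{C_i}$ and brackets the target value $b^\star-\abs{\Mn}$. Balancing these two constraints against the exponents coming from the exponential upper bounds is, I expect, precisely what forces the sharper threshold $a>31/16$, which improves on the $81/41$ of theorem~\ref{Ising_thm_CV} once proposition~\ref{prop_messikh} is fed in. Combining the three pieces gives $\mu_n(\abs{T_n-T_c}\geqslant\varepsilon)\leqslant n^2\,e^{-c(\varepsilon)\,n+o(n)}$, which yields both the convergence to $\delta_{T_c}$ and the speed (\ref{estimee_vitesse}).
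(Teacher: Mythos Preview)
Your high-level structure is correct and matches the paper: the three ingredients are exactly Lemmas~\ref{Ising_majo_surcritique}, \ref{Ising_majo_souscritique} and~\ref{Ising_lemme_minoration_Z_n}, and they are combined through~(\ref{outline_majo_surcritique}) and~(\ref{outline_majo_souscritique}) just as you describe. Your sketch of the two uniform upper bounds is also broadly in line with the paper, though a bit optimistic about monotonicity (the magnetization itself is not monotone in~$p$, so one really has to split~$m$ into~$\abs{\Mn}$ plus fluctuations and control each piece separately, as in sections~\ref{bigsec_surcritique} and~\ref{bigsec_souscritique}).

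The genuine gap is in your treatment of the lower bound on~$Z_n$. You propose to pick a single~$b^\star$ so that~$p^\star-p_c\sim K n^{8a-16}$, use~$\mathcal{FSS}(16-8a)$ to say that~$\abs{\Mn}$ concentrates near~$\theta(p^\star)n^2$, and then rely on a local CLT for the random sign sum~$X=\sum_C\varepsilon_C\abs{C}$ to hit the target~$b^\star-\abs{\Mn}$ with polynomial probability. But~$\mathcal{FSS}(16-8a)$ only pins~$\abs{\Mn}$ down to an interval of width~$\delta\,\theta(p^\star)n^2\asymp n^a$, so the target~$b^\star-\abs{\Mn}$ is typically of order~$n^a$. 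Meanwhile the variance of~$X$ is at most~$n^{s+1/2}\cdot n^2=n^{s+5/2}$ with~$s=16-8a$, giving a fluctuation scale~$n^{s/2+5/4}$. The inequality~$s/2+5/4\geqslant a$ rewrites as~$a\leqslant 37/20$, which is \emph{disjoint} from the range~$(31/16,2)$ under consideration. Thus~$X$ is asked to realize a large deviation of order~$n^a$, and the probability is exponentially small, not polynomial. Your balancing heuristic for the threshold~$31/16$ is therefore off: this exponent does not arise from matching the fluctuation scale of~$X$ to the target gap.

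This is precisely the obstacle discussed in section~\ref{section_heuristique_Zn}, and it is why the paper does \emph{not} rely on anti-concentration of~$X$ alone. Instead it performs a deterministic surgery on the percolation configuration (Lemma~\ref{lemme_proba_Rn}): starting from a typical~$\omega$, one closes~$O(n^{a/2})$ edges so as to disconnect exactly~$\Ceil{(\abs{\Mn}-b_n)/2}$ vertices from the boundary, then forces the disconnected piece to take spin~$-1$ (the ``halfway cut'' trick). This brings the gross contribution to exactly~$b_n$, and the remaining fluctuation~$X$ only needs to hit~$0$, which is handled by a careful size-by-size argument exploiting the~$n^{33/2-8a}$ bound on cluster sizes and the abundance of unit clusters (Lemma~\ref{Mn_FK_lemme_minoration_Z_n} and section~\ref{section_minoration_Zn}). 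The condition~$a>31/16$ then comes from requiring~$n^{33/2-8a}\ln n=o(n)$ in that last step, not from any fluctuation--target balance for~$X$.
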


\subsection{Heuristics for the lower bound on the partition function}

\label{section_heuristique_Zn}

We explain here the strategy to obtain a lower bound on~$Z_n$. This is the key step of our proof, detailed in section~\ref{bigsec_minoZn}. We take~$a\in(31/16,\,2)$ such that the assumptions~$\mathcal{FSS}(16-8a)$ hold, and we start by rewriting~$Z_n$ as
\begin{equation}
\label{reecriture_Zn}
Z_n\ =\ \sum_{\sigma\in\acc{-,+}^{\Lambda(n)}}{\mu_{n,\,T_n(\sigma)}^+(\sigma)}
\ =\ \sum_{b=-n^2}^{n^2}{\mu_{n,\,b^2/n^{2a}}^+\Big(\,m=b\,\Big)}\,.
\end{equation}
\correct{As can be seen in~(\ref{reecriture_Zn}), our partition function~$Z_n$ is in a way related to the \guillemets{steepness} of the phase transition in the Ising model.
If the phase transition were too brutal, then~$Z_n$ would be very small, since it would be very unlikely to observe a value of~$b$ with an intermediate magnetization, of order~$n^a$.

Since we bypass this study of the steepness of the phase transition by using results following from the exact computations of Onsager, one may wonder whether, in return, information about the phase transition of the Ising model could be deduced from our results.
This remains an open question, but it seems unlikely to us that our lower bound on~$Z_n$ could be useful as it is, because we do not think it to be optimal.
Nevertheless, the cutting and coloring technique that we develop may be useful for future applications in the study of the near-critical regime of the Ising or Potts models.}

\suitepreuve{Quest for the fixed point~$b_n$:}
To obtain our lower bound, we search for a value of~$b_n$ such that, at~$T=b_n^2/n^{2a}$, the magnetization is exactly~$b_n$ with probability high enough. If we choose~$b_n$ such that~$b_n^2/n^{2a}\centernot{\longrightarrow}T_c$, then we cannot obtain a slower decay than the decays proved in the subcritical regime (in section~\ref{bigsec_surcritique}) and in the supercritical regime (in section~\ref{bigsec_souscritique}). Thus, we will choose~$b_n$ such that~$b_n\sim n^a\sqrt{T_c}$. Thanks to the Edwards-Sokal coupling (see paragraph~\ref{thm_Edwards_Sokal}), the Ising model at temperature~$T=b_n^2/n^{2a}$ can be recovered from the random-cluster model with parameters~$q=2$ and~$p=\varphi_n(b_n)$, where~$\varphi_n$ is the function defined by
\begin{equation}
\label{definition_varphi}
\varphi_n\ :\ b\in\big\{\,-n^2,\,\ldots,\,n^2\,\big\}\ \longmapsto\ \left\{\begin{aligned}
&1\text{ if }b=0\,,\phantom{\left(\demi\right)}\\
&1-\exp\left(-\frac{2n^{2a}}{b^2}\right)\text{ otherwise.}
\end{aligned}\right.
\end{equation}
For any percolation configuration~$\omega:\En\rightarrow\acc{0,1}$, the set of vertices connected to the boundary is denoted by~$\Mn(\omega)$ (see section~\ref{bigsec_defs} for all these general definitions). The idea is to choose~$b_n$ such that, under the law~\smash{$\phi_{n,\,\varphi_n(b_n),\,2}^1$}, the number of vertices connected to the boundary is typically of the order of~$b_n$, and then to control the magnetization of the clusters which do not touch the boundary of the box. Therefore, a natural strategy \correct{consists of} proving a lower bound on the probability that, on the one hand,~$\abs{\Mn(\omega)}=b_n$, and on the other hand, the contribution of the other clusters to the magnetization cancels out, so as to attain a magnetization exactly equal to~$b_n$. To obtain a percolation configuration~$\omega$ such that~$\abs{\Mn(\omega)}=b_n$, it is simpler to require as a first step that~$\abs{\Mn}$ be between~$b_n$ and~$\lambda n^a$, with~$\lambda>\sqrt{T_c}$, and in a second step to close some edges to reach exactly~$b_n$. Therefore, we take~$b_n$ satisfying
$$\theta\big(\varphi_n(b_n)\big)\abs{\Lambda(n)}\ \eqninfty\ \mu n^a\,,
\qquadavec
\sqrt{T_c}\ <\ \mu\ <\ \lambda\,.$$
According to the asymptotics for~$\theta(p)$ given by the exact computations of Onsager~\cite{Onsager} and Yang~\cite{Yang}, we need that
$$\varphi_n(b_n)-p_c(2)\ \eqninfty\ \frac{p_c(2)}{8}\left(\frac{\mu}{n^{2-a}}\right)^8\,.$$
We will then have, under the assumptions~$\mathcal{FSS}(16-8a)$,
\begin{equation}
\label{encadrement_lambda}
\limn\phi_{n,\,\varphi_n(b_n),\,2}^1\Big(\,b_n\,\leqslant\,\abs{\Mn}\,\leqslant\,\lambda n^a\,\Big)
\ =\ 1\,.
\end{equation}
\begin{figure}[ht]
\begin{center}
\begin{tikzpicture}
\filldraw[fill=gray!30] (-5,0) --++ (3,0) --++ (0,3) --++ (-3,0) -- cycle;
\filldraw[fill=white] (-4,0.65) to[out=10, in=-110] (-2.2,1.5) to[out=70, in=45] (-3.7,2.6) to[out=-135, in=20] (-3.5,1.4) to[out=-160, in=40] (-4.6,1.3) to[out=-140, in=-170] (-4,0.65);
\draw (-3.5,0.35) node{$\abs{\Mn}=b_n+x$};

\draw[->,thick] (-1.25,1.5) --++ (1.5,0);
\draw (-0.5,1) node{\guillemets{Surgery}};

\filldraw[fill=gray!30] (1,0) --++ (3,0) --++ (0,3) --++ (-3,0) -- cycle;
\filldraw[fill=white] (2,0.65) to[out=10, in=-110] (3.8,1.5) to[out=70, in=45] (2.3,2.6) to[out=130, in=50] (1.3,2.7) to[out=-130, in=150] (1.4,1.3) to[out=-140, in=-170] (2,0.65);
\filldraw[fill=gray!15] (2.25,2.55) to[out=-135, , in=20] (2.5,1.4) to[out=-160, in=40] (1.45,1.35) to[out=150, in=-130] (1.38,2.64) to[out=50, in=130] (2.25,2.55);
\draw (2.5,0.35) node{$\abs{\Mn}=b_n$};
\draw (1.7,2) node{$x$};

\draw[->,thick] (4.75,1.5) --++ (1.5,0);
\draw (5.5,1) node{Edwards-Sokal};

\filldraw[fill=blue!5] (7,0) --++ (3,0) --++ (0,3) --++ (-3,0) -- cycle;
\foreach \x in {0,...,8}
{\foreach \y in {0,...,8}
{\draw[blue] (\x/3+7.16,\y/3+0.16) node{$+$};}
}
\filldraw[fill=white] (8,0.65) to[out=10, in=-110] (9.8,1.5) to[out=70, in=45] (8.3,2.6) to[out=130, in=50] (7.3,2.7) to[out=-130, in=150] (7.4,1.3) to[out=-140, in=-170] (8,0.65);
\draw[red] (8,1) node{$-$};
\draw[red] (9,2) node{$-$};
\draw[red] (8.5,1.6) node{$-$};
\draw[red] (8.7,1.3) node{$-$};
\draw[red] (7.9,1.8) node{$-$};
\draw[blue] (8.2,1.2) node{$+$};
\draw[blue] (8.4,1.8) node{$+$};
\draw[blue] (8.8,2.2) node{$+$};
\draw[blue] (9,1.16) node{$+$};
\filldraw[fill=red!10] (8.25,2.55) to[out=-135, , in=20] (8.5,1.4) to[out=-160, in=40] (7.45,1.35) to[out=150, in=-130] (7.38,2.64) to[out=50, in=130] (8.25,2.55);
\draw[red] (7.6,2.5) node{$-$};
\draw[red] (8.1,2) node{$-$};
\draw[red] (7.4,1.9) node{$-$};
\draw[red] (7.8,1.7) node{$-$};
\draw (8.8,0.35) node[fill=blue!5]{${\scriptstyle m\,=\,b_n}$};
\draw (9.2,1.7) node{${\scriptstyle m=0}$};
\draw (7.7,2.2) node{${\scriptstyle m=\pm x}$};
\draw (8.5,-0.3) node{$m(\sigma)=b_n\pm x$};
\end{tikzpicture}
\end{center}
\caption{A strategy which seems natural is, when there are~$b_n+x$ vertices connected to the boundary, to disconnect a piece of size~$x$ to obtain~$\abs{\Mn}=b_n$. But, doing so, the disconnected piece makes it harder to bound the magnetization in the corresponding Ising model, and hence we do not proceed this way.}
\end{figure}

\vspace{-\baselineskip}
\suitepreuve{Trick of the halfway cut:}
Starting from a configuration satisfying~(\ref{encadrement_lambda}), a natural idea is to close a certain set of edges to disconnect exactly~$\abs{\Mn}-b_n$ vertices from the boundary, in order to obtain~$\abs{\Mn}=b_n$.
But a problem arises when trying to control the fluctuations of the magnetization of the clusters which do not touch the boundary, because the set we have disconnected from the boundary has a size of order~$n^a$. Hence, even if this set is not necessarily connected, its contribution to the magnetization might be too high to be compensated. Therefore, we bypass this problem by disconnecting twice less vertices than needed to reach~$\abs{\Mn}=b_n$, and then forcing the set of the vertices thus disconnected to choose a negative spin. In that way, the overall magnetization resulting from the vertices connected to the boundary and from this set of disconnected vertices altogether will be~$b_n$, and the only thing left to do will be to force the magnetization of the other clusters to cancel out.
\begin{figure}[ht]
\begin{center}
\begin{tikzpicture}
\filldraw[fill=gray!30] (-5,0) --++ (3,0) --++ (0,3) --++ (-3,0) -- cycle;
\filldraw[fill=white] (-4,0.65) to[out=10, in=-110] (-2.2,1.5) to[out=70, in=45] (-3.7,2.6) to[out=-135, in=20] (-3.5,1.4) to[out=-160, in=40] (-4.6,1.3) to[out=-140, in=-170] (-4,0.65);
\draw (-3.5,0.35) node{$\abs{\Mn}=b_n+x$};

\draw[->,thick] (-1.25,1.5) --++ (1.5,0);
\draw (-0.5,1) node{\guillemets{Surgery}};

\filldraw[fill=gray!30] (1,0) --++ (3,0) --++ (0,3) --++ (-3,0) -- cycle;
\filldraw[fill=white] (2,0.65) to[out=10, in=-110] (3.8,1.5) to[out=70, in=45] (2.3,2.6) to[out=150, in=120] (1.4,1.3) to[out=-140, in=-170] (2,0.65);
\filldraw[fill=gray!15] (2.3,2.5) to[out=-135, , in=20] (2.5,1.4) to[out=-160, in=40] (1.5,1.3) to[out=120, in=150] (2.3,2.5);
\draw (2.5,0.35) node{$\abs{\Mn}=b_n+x/2$};
\draw (1.95,1.8) node{$x/2$};

\draw[->,thick] (4.75,1.5) --++ (1.5,0);
\draw (5.5,1) node{Edwards-Sokal};

\filldraw[fill=blue!5] (7,0) --++ (3,0) --++ (0,3) --++ (-3,0) -- cycle;
\foreach \x in {0,...,8}
{\foreach \y in {0,...,8}
{\draw[blue] (\x/3+7.16,\y/3+0.16) node{$+$};}
}
\filldraw[fill=white] (8,0.65) to[out=10, in=-110] (9.8,1.5) to[out=70, in=45] (8.3,2.6) to[out=150, in=120] (7.4,1.3) to[out=-140, in=-170] (8,0.65);
\draw[red] (8,1) node{$-$};
\draw[red] (9,2) node{$-$};
\draw[red] (8.5,1.6) node{$-$};
\draw[red] (8.7,1.3) node{$-$};
\draw[red] (7.9,1.8) node{$-$};
\draw[blue] (8.2,1.2) node{$+$};
\draw[blue] (8.4,1.8) node{$+$};
\draw[blue] (8.8,2.2) node{$+$};
\draw[blue] (9,1.16) node{$+$};
\filldraw[fill=red!10] (8.3,2.5) to[out=-135, , in=20] (8.5,1.4) to[out=-160, in=40] (7.5,1.3) to[out=120, in=150] (8.3,2.5);
\draw[red] (7.9,2.3) node{$-$};
\draw[red] (8.1,2) node{$-$};
\draw[red] (7.7,1.9) node{$-$};
\draw (8.8,0.35) node[fill=blue!5]{${\scriptstyle m\,=\,b_n+x/2}$};
\draw (9.2,1.7) node{${\scriptstyle m=0}$};
\draw (8,1.6) node{${\scriptstyle m=-\frac{x}{2}}$};
\draw (8.5,-0.3) node{$m(\sigma)=b_n$};
\end{tikzpicture}
\end{center}
\caption{Our strategy to prove the lower bound on~$Z_n$, with the surgery step implemented in section~\ref{section_chirurgie} and the forced colouring of section~\ref{section_minoration_Zn}.}
\end{figure}

\suitepreuve{Construction of the cutting:}
How can we find a set of edges whose closure disconnects from the boundary exactly~\smash{$\Ceil{(\abs{\Mn}-b_n)/2}$}
vertices, and which is not too big, for the closure of these edges not to be too \guillemets{expensive}? The idea is to consider a sub-box of side~$n_1=\Ent{5n/6}$, and to use again the hypotheses~$\mathcal{FSS}(16-8a)$, which ensure that
$$\limn\phi_{n,\,\varphi_n(b_n),\,2}^1\Big(\,\abs{\Mn\cap\Lambda\left(n_1\right)}\,\geqslant\,\nu n^a\,\Big)\ =\ 1\,,
\quadavec
\nu\ <\ \left(\frac{5}{6}\right)^2\mu\,.$$
If also~$\nu>(\lambda-\sqrt{T_c})/2$,
then we have, with probability converging to one,
$$\abs{\Mn\,\backslash\,\Lambda\left(n_1\right)}
\ \leqslant\ \frac{\abs{\Mn}}{2}+\frac{\lambda n^a}{2}-\nu n^a
\ \leqslant\ \frac{\abs{\Mn}}{2}+\frac{\sqrt{T_c}}{2}n^a
\ \simeq\ \frac{\abs{\Mn}+b_n}{2}\,,$$
meaning that it is enough to disconnect the smaller box~$\Lambda(n_1)$ in order to go half the way separating~$\abs{\Mn}$ from~$b_n$.
In what follows, we will take as parameters~$\lambda=4$,~$\mu=3$ and~$\nu=2$. Using the pigeonhole principle, we will show that it does not cost more than~$O\left(n^{a-1}\right)$ edges to disconnect this sub-box from the boundary (this will be done in lemma~\ref{lemme_proba_Rn}). But doing this, we may have cut too much, so we use the geometrical lemma of~\cite{ModelsofSOCinPerco} to adjust the cutting to obtain the desired result, for a cost of~$O(n^{a/2})$ edges.

\suitepreuve{Control of the fluctuations of the magnetization:}
We then construct an Ising configuration from the percolation configuration we have obtained with the cutting procedure. It does not cost more than~\smash{$2^{-O(n^{a/2})}$} to force the set of the vertices we have disconnected to choose a minus spin, because this set is made up of at most~$O(n^{a/2})$ clusters. Regarding the other clusters which were already not connected to the boundary of the box before the cutting operation, we have a bound~$N$ on their size given by the assumptions~$\mathcal{FSS}(16-8a)$. This allows us to show that, with sufficient probability, the magnetization resulting from these clusters will fall inside the range~$[-N,\,N]$. To compensate this contribution, we will force the spins of the clusters of size~$1$, after having shown that, with high probability, there are are at least~$N$ such unit clusters. All this will allow us to build an Ising configuration with a magnetization exactly equal to~$b_n$, with a small parity issue that we will dodge by choosing~$b_n$ with the right parity, the magnetization being constrained to have the same parity as~$\abs{\Lambda(n)}$.

\subsection{Naturalness of the model}

The probability density of our model is
\begin{equation}
\label{mun_ZZ}
\mu_n\ :\ \sigma\ \longmapsto\ 
\frac{1}{Z_n}\frac{1}{Z_{n,\,T_n(\sigma)}^+}\exp\left(-\frac{\mathcal{H}_n^+(\sigma)}{T_n(\sigma)}\right)\,,
\end{equation}
where~$\mathcal{H}_n^+$ is the Hamiltonian of the standard Ising model, defined in~(\ref{def_Hamiltonien_Ising}).
An other distribution which may look more natural to consider is
\begin{equation}
\label{natural_model}
\mu'_n\ :\ \sigma\ \longmapsto\ 
\frac{1}{Z'_n}\exp\left(-\frac{\mathcal{H}_n^+(\sigma)}{T_n(\sigma)}\right)\,,
\end{equation}
where~$Z'_n$ is the required normalization constant. Most surprisingly, simulations seem to indicate that this other model might be too simple to exhibit the self-critical behaviour we are looking for. We give here a simple heuristics to understand the difference between these two models. When simulating the Ising model with the Glauber dynamics (see section~8.2 of~\cite{GrimmettFK}), the configuration is updated one spin after another, by looking at the impact of the spin flip on the energy. In the standard Ising model, the \guillemets{energy} function writes~$E_{Ising}=\mathcal{H}_n^+/T$.
Therefore, when flipping one spin, the variation of energy is~$\Delta E_{Ising}=\Delta \mathcal{H}_n^+/T$,
and the spin flip is more likely if it leads to a lower value for the Hamiltonian~$\mathcal{H}_n^+$. However, in the model~(\ref{natural_model}), due to the fact that the temperature is no longer a constant, a spin flip leads to a change in energy given by
\begin{equation}
\label{deltaE}
\Delta E_{\mu'_n}\ =\ \Delta\left(\frac{\mathcal{H}_n^+}{T_n}\right)
\ \approx\ \frac{\Delta \mathcal{H}_n^+}{T_n}-\frac{\mathcal{H}_n^+\Delta T_n}{T_n^2}
\ =\ \frac{\mathcal{H}_n^+}{T_n}\left(\frac{\Delta\mathcal{H}_n^+}{\mathcal{H}_n^+}-\frac{\Delta T_n}{T_n}\right)\,.
\end{equation}
There is a competition between the two terms in~(\ref{deltaE}) to influence the spin flip. Therefore, the dynamics of the Hamiltonian is perturbed by the dynamics on the temperature. For our feedback temperature function to create a self-organized behaviour, we need the dynamics on the Hamiltonian to outweigh the drift force in temperature. If this is the case, then the configuration has the time to reach a typical configuration at fixed temperature, and only once this equilibrium is reached, on a longer time scale, the temperature evolves and slowly drives the system towards criticality. Computer simulations confirm this idea that the dynamics on the Hamiltonian and on temperature must not compete but should occur on different time scales, in order to reach a self-critical state. Indeed, if one simulates the model~(\ref{natural_model}) with the Glauber dynamics but without taking into account the change in temperature when deciding the spin flip, one gets a much more promising output, which is improved if the temperature parameter is updated only once in a while, leaving the configuration some time to reach equilibrium at fixed temperature before evaluating a new temperature parameter.

One may wonder why there is not the same unfortunate competition phenomenon with our model~$\mu_n$ given by equation~(\ref{mun_ZZ}). In this model, if we take into account the influence of the partition function in the denominator, then the energy function writes
$$E_{\mu_n}\ =\ \ln Z_{n,\,T_n}^+ + \frac{\mathcal{H}_n^+}{T_n}\,.$$
Therefore, a \correct{spin flip} leads to a change in energy given by
\begin{align*}
\Delta E_{\mu_n}
\ =\ \Delta\left(\ln Z_{n,\,T_n}^++\frac{\mathcal{H}_n^+}{T_n}\right)
\ \approx\ &\frac{\Delta\mathcal{H}_n^+}{T_n}+\frac{\partial\ln Z_{n,\,T}^+}{\partial T}\Delta T_n-\frac{\mathcal{H}_n^+\Delta T_n}{T_n^2}\\
\ \correct{\approx}\ &\frac{\Delta\mathcal{H}_n^+}{T_n}+\left(\mu_{n,\,T_n}^+\big(\mathcal{H}_n^+\big)-\mathcal{H}_n^+\right)\frac{\Delta T_n}{T_n^2}\,.
\end{align*}
Hence, the factor in front of~$\Delta T_n$ is tailored to be smaller than in~(\ref{deltaE}), thanks to the compensation coming from the term~\smash{$\mu_{n,\,T_n}^+\big(\mathcal{H}_n^+\big)$}. This \correct{may} explain why, in our model, the effect of the Hamiltonian overcomes the temperature effect, which ensures that our self-tuning of the temperature parameter is delicate enough to preserve the equilibrium properties of the model.

\correct{Since it involves a global interaction through a feedback from the whole configuration to the temperature parameter, our model may seem somewhat unrealistic from a physical point of view.
But, even in models where the interaction mechanism is only local, it seems that self-organized criticality appears as the result of large-scale instabilities which are created dynamically.
For example, in the abelian sandpile model~\cite{Dhar99ASM}, long-range interactions are hidden behind the fact that the addition of one sand grain can trigger a large avalanche.
Therefore, integrated over a long time, these interactions lead to a global interdependence, with long-range correlations.

In fact, to obtain self-organized criticality, it seems that one cannot avoid either having this sort of dynamics, or defining a global static interaction, like the one in our model.
Thus, our model may be considered not very natural, but we think that it would be hard to define a much more natural static model presenting the phenomenon of self-organized criticality.}

\section{Definitions and notations}

\label{bigsec_defs}

\subsection{Edges and boxes}

The entire article takes place in~$d=2$. The square box of side~$n$ centered at~$0$ is denoted~$\Lambda(n)=[-n/2,\,n/2[^2\cap\Z^2$.
We say that two points~$x,\,y\in\Z^2$ are neighbours if~$\norme{x-y}_1=1$, which is denoted~$x\sim y$. For any~$V\subset\Z^2$, we write
$$\Edge{V}\ =\ \Big\{\acc{x,\,y}\subset V\ :\ x\sim y\,\Big\}\,.$$
We define in this way~$\Ed=\Edge{\Z^2}$ \correct{and}~$\En=\Edge{\Lambda(n)}$.
\correct{For~$V\subset\Z^2$, we define the interior and exterior boundary of~$V$ to be, respectively:}
\begin{align*}
\partial V\ &=\ \Big\{\,x\in V\ :\ \exists\,y\in\Z^2\backslash V\quad x\sim y\,\Big\}\,,\\
\partial^e V\ &=\ \Big\{\,\acc{x,\,y}\in\Ed\ :\ x\in V\quadet y\in\Z^2\backslash V\,\Big\}\,.
\end{align*}
\correct{For~$V\subset\Z^2$ finite and non-empty, we write}~\smash{$\diam\,V=\max\limits_{x,y\in V}\norme{x-y}_\infty$}.

\subsection{Percolation configurations}

An element~$\omega:\En\rightarrow\acc{0,1}$ is called a percolation configuration. Edges~$e\in\En$ such that~$\omega(e)=1$ are \correct{said to be open} in~$\omega$, while the other edges are \correct{said to be closed} in~$\omega$.
The space of configurations is endowed with a natural partial order defined by~$\omega_1\leqslant\omega_2$ if~$\omega_1(e)\leqslant\omega_2(e)$ for all edges~$e\in\En$. If~$\mu_1$ and~$\mu_2$ are two probability measures on~\smash{$\acc{0,1}^{\En}$}, \correct{we write}~$\mu_1\preceq\mu_2$ if~$\mu_1(X)\leqslant\mu_2(X)$ for every increasing~\smash{$X:\acc{0,1}^{\En}\rightarrow\R$}.
\correct{For~$x,\,y\in\Lambda(n)$, we write~\smash{$x \connecte y$}
when there exists an open path from~$x$ to~$y$ in~$\omega$, and if~$Y\subset\Lambda(n)$, we write~$x \connecte Y$ if there exists~$y\in Y$ such that~$x \connecte y$.
If~$x\in\Lambda(n)$, the cluster of~$x$ in~$\omega$ is}
$$C(x)\ =\ C(x,\,\omega)\ =\ \acc{\,y\in\Lambda(n)\ :\ x \connecte y\,}\,.$$
For any set of edges~$H\subset\En$, we define the configuration~$\omega_H$ obtained from~$\omega$ by closing all the edges of~$H$:
$$
\omega_H\ :\ e\in\En\ \longmapsto\ \left\{
\begin{aligned}
&0\text{ if }e\in H\,,\\
&\omega(e)\text{ otherwise.}
\end{aligned}
\right.
$$
The set of the vertices connected to the boundary in~$\omega$ is written
$$\Mn(\omega)\ =\ \Big\{\,x\in\Lambda(n)\ :\ x\stackrel{\omega}{\longleftrightarrow}\partial\Lambda(n)\,\Big\}\,.$$
The set of the open clusters in the configuration~$\omega$ will be denoted
$$\mathcal{C}_n(\omega)\ =\ \Big\{\,C(x)\ :\ x\in\Lambda(n)\,\Big\}\,,$$
while~$\Clusdec(\omega)$ \correct{and~$\Clusdec(k,\,\omega)$ indicate, respectively,} the set of the open clusters which do not touch the boundary \correct{and the subset of these clusters which contain exactly~$k$ vertices, namely}
\begin{align*}
\Clusdec(\omega)
\ &=\ \Big\{\,C(x)\ :\ x\in\Lambda(n)\text{ such that }x\centernot\longleftrightarrow\partial\Lambda(n)\,\Big\}\,,\\
\Clusdec(k)\ &=\ \Clusdec(k,\,\omega)\ =\ \Big\{\,C\in\Clusdec(\omega)\ :\  \abs{C}=k\,\Big\}\,.
\end{align*}
Finally, let us note~$k^0(\omega)=\abs{\mathcal{C}_n(\omega)}$ and~$k^1(\omega)=\abs{\Clusdec(\omega)}+1$.

\subsection{The Ising model}

\label{section_def_Ising}

Fix~$n\geqslant 1$ and~$T>0$. The Ising model in the box~$\Lambda(n)$ at temperature~$T$ and with boundary condition~$+$ is defined as the probability measure
$$\mu_{n,\,T}^+\ :\ \sigma\in\acc{-,+}^{\Lambda(n)}\ \longmapsto\ \frac{1}{Z_{n,\,T}^+}\exp\left(-\frac{\mathcal{H}_n^+(\sigma)}{T}\right)$$
where~$Z_{n,\,T}^+$ is the normalization constant, and the Hamiltonian is given by
\begin{equation}
\label{def_Hamiltonien_Ising}
\mathcal{H}_n^+(\sigma)\ =\ -\sum_{\acc{x,y}\in\En}{\sigma(x)\sigma(y)}\ =\ -\demi\sum_{\substack{x,y\in\Lambda(n)\\ x\sim y}}{\sigma(x)\sigma(y)}
\end{equation}
if~$\sigma(x)=+$ for all~$x\in\partial\Lambda(n)$, and~$\mathcal{H}_n^+(\sigma)=+\infty$ otherwise.
We extend the above definition to the case of zero-temperature by setting
$$\mu_{n,\,0}^+\ :\ \sigma\in\acc{-,+}^{\Lambda(n)}\ \longmapsto\ \left\{\begin{aligned}
&1\quad\text{if }\sigma(x)=+\text{ for all }x\in\Lambda(n)\,,\\
&0\quad\text{otherwise.}
\end{aligned}\right.$$
The magnetization of a configuration~$\sigma:\Lambda(n)\rightarrow\acc{-,+}$ is defined by
$$m(\sigma)\ =\ \sum_{x\in\Lambda(n)}{\sigma(x)}\,.$$
We write~$T_c=2/\ln\left(\sqrt{2}+1\right)$ for the critical temperature of the Ising model.

\subsection{The random-cluster model}

The random-cluster model on~$\Lambda(n)$ with parameters~$p\in[0,1]$ and~$q>0$ and with boundary conditions~$\xi\in\acc{0,1}$ is defined as:
$$\phi_{n,\,p,\,q}^\xi(\omega)\ :\ \omega\in\acc{0,1}^{\En}\ \longmapsto\ \frac{1}{Z_{n,\,p,\,q}^\xi}\,q^{k^\xi(\omega)}\prod_{e\in\En}{p^{\omega(e)}(1-p)^{1-\omega(e)}}$$
where ~$Z_{n,\,p,\,q}^\xi$ is the appropriate normalization constant.
If~$p\in[0,1]$,~$q\geqslant 1$ and~$\xi\in\acc{0,1}$, then when~$n\correct{\to\infty}$, the measure~\smash{$\phi_{n,\,p,\,q}^\xi$} weakly converges to a probability distribution~$\phi_{p,\,q}^\xi$ on~\smash{$\acc{0,1}^{\Ed}$} (see theorem~4.19 in~\cite{GrimmettFK}). We write, for~$p\in[0,1]$,~$q\geqslant 1$ and~$\xi\in\acc{0,1}$,
$$\theta^\xi(p,\,q)\ =\ \phi_{p,\,q}^\xi\Big(\,\abs{C(0)}=\infty\,\Big)\,,$$
\correct{and the critical point of the model is defined as}
$$p_c(q)\ =\ \inf\Big\{\,p\in[0,1]\ :\ \theta^1(p,\,q)>0\,\Big\}\,.$$
We will sometimes omit the parameter~$q$ in the notation, which will mean that~$q=2$. In this particular case of~$q=2$, the measures~$\phi_{p,\,2}^0$ and~$\phi_{p,\,2}^1$ turn out to be equal (by corollary~3 in~\cite{Raoufi2017translationinvariant}), thus we will just write~$\theta(p)=\theta^1(p,\,2)$.

\subsection{Edwards-Sokal coupling}

\label{thm_Edwards_Sokal}
The random-cluster model with~$q=2$ \correct{can be coupled with} the Ising model \correct{(see section~1.4 of~\cite{GrimmettFK}), with the following consequence :

\begin{proposition}
\label{lemme_Edwards_Sokal}
Let~$T\geqslant 0$ and~$p=1-e^{-2/T}$ (with the convention~$e^{-2/0}=0$). If~$\omega\sim\Probap$, and if we let~$\sigma(x)=+$ for all~$x\in\Mn(\omega)$ and we assign constant spins on the other clusters in~$\omega$, independent between different clusters, each spin being equally distributed on~$\acc{-,+}$, then we have~$\sigma\sim\mu_{n,\,T}^+$.
\end{proposition}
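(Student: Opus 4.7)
The plan is to prove this by the standard Edwards--Sokal joint-measure construction: introduce a coupling measure~$\nu$ on~$\acc{-,+}^{\Lambda(n)}\times\acc{0,1}^{\En}$, check that its two marginals are respectively~$\mu_{n,\,T}^+$ and~$\Probap$, and then read off the conditional law of~$\sigma$ given~$\omega$ under~$\nu$. I would first handle the case~$T>0$ and dispose of the boundary case~$T=0$ separately.

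Concretely, define
$$\nu(\sigma,\omega)\ \propto\ \mathbb{1}\bigl[\sigma\equiv+\text{ on }\partial\Lambda(n)\bigr]\prod_{e=\acc{x,y}\in\En}\Bigl[(1-p)\mathbb{1}_{\omega(e)=0}+p\,\mathbb{1}_{\omega(e)=1}\mathbb{1}_{\sigma(x)=\sigma(y)}\Bigr],$$
and perform the two marginalizations. Summing over~$\omega$ edge by edge gives a factor~$1$ on agreeing edges and~$1-p=e^{-2/T}$ on disagreeing ones; writing the Hamiltonian as~$\mathcal{H}_n^+(\sigma)=2D(\sigma)-\abs{\En}$ with~$D(\sigma)$ the number of disagreeing edges, the~$\sigma$-marginal is seen to be proportional to~$\exp(-\mathcal{H}_n^+(\sigma)/T)$ with the~$+$ boundary constraint, i.e.~$\mu_{n,\,T}^+$. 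Summing over~$\sigma$ at fixed~$\omega$, the compatibility constraint forces~$\sigma$ to be constant on each cluster of~$\omega$ and the plus boundary condition pins every cluster meeting~$\partial\Lambda(n)$ to~$+$, leaving~$2^{\abs{\Clusdec(\omega)}}$ compatible spin configurations; the~$\omega$-marginal is therefore proportional to~$2^{\abs{\Clusdec(\omega)}}\prod_{e\in\En}p^{\omega(e)}(1-p)^{1-\omega(e)}$, which agrees with~$\Probap$ since~$k^1(\omega)=\abs{\Clusdec(\omega)}+1$.

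With both marginals identified, the conditional law of~$\sigma$ given~$\omega$ under~$\nu$ follows directly from the definition of~$\nu$: it is uniform on spin configurations that are constant on each cluster and equal to~$+$ on every cluster contained in~$\Mn(\omega)$. Sampling~$\omega\sim\Probap$ first and then~$\sigma$ according to this rule therefore produces a pair~$(\sigma,\omega)\sim\nu$, so that~$\sigma\sim\mu_{n,\,T}^+$, which is the claim.

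I do not anticipate a serious obstacle; the only delicate points are keeping track of normalization constants and, in particular, the wired-boundary convention~$k^1(\omega)=\abs{\Clusdec(\omega)}+1$, which merges all boundary-touching clusters into a single cluster and explains the factor~$2^{k^1(\omega)-1}=2^{\abs{\Clusdec(\omega)}}$ of compatible spin choices on the Ising side. The degenerate case~$T=0$ is then immediate: here~$p=1$, the measure~$\Probap$ concentrates on the all-open configuration,~$\Mn(\omega)=\Lambda(n)$, and every spin is forced to~$+$, matching the definition of~$\mu_{n,\,0}^+$.
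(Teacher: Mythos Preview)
Your argument is correct: this is exactly the standard Edwards--Sokal joint-measure computation, and the bookkeeping with~$k^1(\omega)=\abs{\Clusdec(\omega)}+1$ and the degenerate case~$T=0$ are both handled properly. The paper itself does not prove this proposition; it simply states it as a consequence of the coupling and refers to section~1.4 of~\cite{GrimmettFK}, which is precisely the construction you have written out in detail.
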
}

\subsection{Duality}

\label{section_dualite}

The planar random-cluster model enjoys a useful duality property that we \correct{recall} here. \correct{For}~$n\geqslant 2$, we define the dual of the box~$\Lambda(n)$ to be
$$\Lambda^\star(n)\ =\ \Lambda(n-1)+(-1)^{n+1}\left(\demi,\,\demi\right)\,,$$
in such a way that all the vertices of~$\Lambda^\star(n)$ lie in the middle of the faces of the graph~$(\Lambda(n),\,\En)$.
Let us also define the dual edges to be
$$\En^\star
\ =\ \mathbb{E}_{n-1}+(-1)^{n+1}\left(\demi,\,\demi\right)
\ =\ \Big\{\,\acc{x,\,y}\subset\Lambda^\star(n)\ :\ \norme{x-y}_1=1\,\Big\}\,.$$
The interior edges of the box~$\Lambda(n)$ are
\begin{equation}
\label{definition_aretes_interieures}
\En^{int}\ =\ \Big\{\,\acc{x,\,y}\in\En\ :\ \acc{x,\,y}\not\subset\partial\Lambda(n)\,\Big\}\,.
\end{equation}
For any interior edge~$e\in\En^{int}$, we write~$e^\star$ for the edge of~$\En^\star$ which intersects~$e$ perpendicularly in its middle\correct{, and if~$F\subset\En^{int}$ is a set of interior edges, we define~$F^\star=\acc{e^\star\,:\,e\in F}$.}
We then have~\smash{$\En^\star=\big(\En^{int}\big)^\star$}.
To any configuration~\smash{$\omega\in\acc{0,1}^{\En}$} \correct{is associated} a dual configuration~\smash{$\omega^\star\in\acc{0,1}^{\En^\star}$} given by~$\omega^\star(e^\star)=1-\omega(e)$.
If the graph~$(\Lambda^\star(n),\,\En^\star)$ is identified with the box~$(\Lambda(n-1),\,\mathbb{E}_{n-1})$, we obtain a configuration~\smash{$\omega^\star\in\acc{0,1}^{\mathbb{E}_{n-1}}$}. Following equation~6.12 in~\cite{GrimmettFK}, we have:

\begin{proposition}
\label{thm_dualite}
Take~$p\in[0,1]$,~$q\geqslant 1$ and~$n\geqslant 2$. Let~\smash{$\omega\in\acc{0,1}^{\En}$} be distributed according to~$\Probapq$. Then, the associated dual configuration~\smash{$\omega^\star$} is distributed according to~$\phi_{n-1,\,p^\star,\,q}^0$, where~$pp^\star=q(1-p)(1-p^\star)$.
\end{proposition}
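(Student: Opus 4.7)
The plan is to verify the density identity $\Probapq(\omega) \propto \phi^0_{n-1,p^\star,q}(\omega^\star)$ as functions of~$\omega$, up to a multiplicative constant independent of~$\omega$, from which the claim follows since both sides are probability measures supported on sets in natural bijection via $\omega \mapsto \omega^\star$. First I would set up the combinatorial bookkeeping. Writing $o(\omega)$ for the number of open interior edges of~$\omega$, the relation $\omega^\star(e^\star) = 1 - \omega(e)$ for $e \in \En^{int}$ immediately gives $o(\omega^\star) = |\En^{int}| - o(\omega)$. The boundary edges of $\En \setminus \En^{int}$ are not paired with any dual edge and their states do not affect $k^1(\omega)$ (since all of $\partial\Lambda(n)$ lies in the wired cluster), so they contribute only a constant factor to the wired weight that can be absorbed into the normalization.

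Next I would use Euler's formula to obtain the key identity relating cluster counts. Viewing the open-edge subgraph of $(\Lambda(n), \omega)$ with $\partial\Lambda(n)$ collapsed to a single super-vertex (which is exactly what wired boundary conditions encode), its bounded faces are in bijection with the connected components of $\omega^\star$ on $\Lambda^\star(n)$ identified with $\Lambda(n-1)$. Applying Euler's formula to this collapsed planar subgraph, keeping track that the outer face of the primal corresponds to no cluster on the dual (free boundary, with the dual outer face discarded), one obtains an identity of the form
$$k^1(\omega) \;=\; k^0(\omega^\star) \;+\; o(\omega) \;-\; C_n\,,$$
for some constant $C_n$ depending only on~$n$.

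The concluding step is purely algebraic. Substituting the Euler identity into
$$\Probapq(\omega) \;\propto\; q^{k^1(\omega)}\, p^{o(\omega)}\, (1-p)^{|\En^{int}| - o(\omega)}$$
and using the relation $qp^\star(1-p) = p(1-p^\star)$, which is equivalent to $pp^\star = q(1-p)(1-p^\star)$, rewrites the weight, up to a factor depending only on~$n$,~$p$,~$q$, as
$$q^{k^0(\omega^\star)}\, (p^\star)^{o(\omega^\star)}\, (1-p^\star)^{|\En^{int}| - o(\omega^\star)}\,,$$
which is proportional to $\phi^0_{n-1,p^\star,q}(\omega^\star)$ under the identification $(\Lambda^\star(n), \En^\star) \simeq (\Lambda(n-1), \mathbb{E}_{n-1})$.

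The main obstacle is the careful accounting of the boundary in Euler's formula. The wired boundary on the primal collapses the unbounded region into a single vertex and removes what would otherwise contribute an extra face, and this corresponds precisely, on the dual side, to free boundary conditions with the outer dual face discarded. Making sure that every boundary edge, every Euler correction, and every face at infinity is absorbed into a constant factor rather than contributing an $\omega$-dependent discrepancy is the crux of the verification; the result is a classical computation recorded as equation~6.12 in~\cite{GrimmettFK}, and our role is simply to recall it in the notation adopted here.
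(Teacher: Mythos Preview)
Your proposal is correct and follows the classical Euler-formula approach; the paper itself gives no proof at all, simply citing equation~6.12 of~\cite{GrimmettFK}, which is exactly the computation you sketch. One minor point of phrasing: the boundary edges of~$\En\setminus\En^{int}$ do contribute an~$\omega$-dependent factor to~$\Probapq(\omega)$, but since~$k^1(\omega)$ is unaffected by them, their states are independent Bernoulli$(p)$ and independent of the interior edges, so marginalizing over them yields a constant---which is what you need for the law of~$\omega^\star$.
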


\section{Exponential decay for supercritical temperatures}
\label{bigsec_surcritique}

The aim of this section is to prove the following result:

\begin{lemma}
\label{Ising_majo_surcritique}
For all~$a>3/2$, we have
$$\forall\,T_0>T_c\quad
\forall A>0\qquad
\limsupn\,\frac{1}{n}\,\sup_{T\geqslant T_0}\ln\mu_{n,\,T}^+\Big(\,\abs{m}\geqslant An^a\,\Big)\ <\ 0\,.$$
\end{lemma}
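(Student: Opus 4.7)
The plan is to pass to the random-cluster representation via Edwards--Sokal (Proposition~\ref{lemme_Edwards_Sokal}), exploiting monotonicity in $p$ to get uniformity in $T$. Fix $T_0>T_c$ and set $p_0=1-e^{-2/T_0}$, so $p_0<p_c(2)$. For any $T\geqslant T_0$ and $p=1-e^{-2/T}\leqslant p_0$, sample $\omega\sim\Probap$, put $\sigma\equiv+$ on $\Mn(\omega)$, and assign i.i.d.\ uniform signs $\epsilon_C\in\{-,+\}$ to each cluster $C\in\Clusdec(\omega)$. Then $\sigma\sim\mu_{n,T}^+$ and
\begin{equation*}
m(\sigma)=\abs{\Mn(\omega)}+S,\qquad S:=\sum_{C\in\Clusdec(\omega)}\epsilon_C\abs{C},
\end{equation*}
so $\{\abs{m}\geqslant An^a\}\subset\{\abs{\Mn}\geqslant An^a/2\}\cup\{\abs{S}\geqslant An^a/2\}$; the two pieces will be handled separately.

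The event $\{\abs{\Mn}\geqslant An^a/2\}$ is increasing in $\omega$, so stochastic monotonicity in $p$ reduces the question to $\phi_{n,p_0,2}^1(\abs{\Mn}\geqslant An^a/2)$. Proposition~\ref{thm_dualite} sends this to a supercritical dual measure at $p_0^\star>p_c(2)$, and standard RSW-type supercritical crossing estimates produce, with probability at least $1-e^{-cn}$, a dual circuit contained in the thin annulus $\Lambda(n)\setminus\Lambda(n-2k_0)$ of some fixed width $k_0=k_0(p_0)$. Such a circuit blocks every primal path from $\Lambda(n-2k_0)$ to $\partial\Lambda(n)$, so $\Mn\subset\Lambda(n)\setminus\Lambda(n-2k_0)$ and $\abs{\Mn}\leqslant 8k_0 n\ll An^a/2$ as soon as $a>1$.

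For the second event, condition on $\omega$: since $S$ is a Rademacher sum with weights $(\abs{C})_{C\in\Clusdec}$,
\begin{equation*}
\mathbb{E}_\epsilon e^{tS}=\prod_{C\in\Clusdec}\cosh(t\abs{C})\leqslant\exp\Bigl(\tfrac{t^2}{2}V(\omega)\Bigr),\qquad V(\omega):=\sum_{C\in\Clusdec}\abs{C}^2.
\end{equation*}
Using $V\leqslant\sum_{x\in\Lambda(n)}\abs{C(x,\omega)}$ together with the finiteness of the susceptibility $\chi(p_0)=\mathbb{E}\abs{C(0,\omega)}$ at subcritical $p_0$ (sharpness of the phase transition), one shows that $\log\mathbb{E} e^{tS}\leqslant C_0 n^2 t^2$ for all $\abs{t}$ in a fixed neighborhood of $0$, uniformly in $p\leqslant p_0$. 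Chernoff's inequality then yields $\Pr(\abs{S}\geqslant An^a/2)\leqslant 2\exp(-cn^{2a-2})$, which is $\leqslant e^{-cn}$ precisely when $a>3/2$. Combining with the bound on $\{\abs{\Mn}\geqslant An^a/2\}$ proves the lemma.

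The main obstacle is this Gaussian-type MGF bound for $S$: the crude estimate $V\leqslant n^2\max_{C\in\Clusdec}\abs{C}$ combined with exponential cluster-size tails only produces $\exp(-cn^{2a-3})$, which fails to beat $e^{-cn}$ when $a<2$. One must instead use that $V$ is controlled on average by the susceptibility via exponential decay of connection probabilities, and the threshold $a=3/2$ is exactly where the Gaussian fluctuations of $S$, of order $\sqrt{\chi(p_0)}\,n$, become smaller than the deviation scale $n^a$ with exponential confidence in $n$.
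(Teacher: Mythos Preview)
Your overall strategy---Edwards--Sokal coupling, monotonicity in~$p$ to reduce to~$p_0$, and the split into $\{\abs{\Mn}\geqslant An^a/2\}$ and $\{\abs{S}\geqslant An^a/2\}$---is the same as the paper's, but both of the arguments you give for the two pieces contain genuine errors.

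For~$\abs{\Mn}$: a dual circuit in an annulus of \emph{fixed} width~$k_0$ does not exist with probability $1-e^{-cn}$; its probability in fact tends to~$0$. Such a circuit requires an open dual crossing of a $k_0\times n$ rectangle in the long direction, but for any fixed~$k_0$ each of the~$\sim n$ disjoint columns of~$k_0$ horizontal dual edges is entirely closed with probability bounded below (e.g.\ by the comparison with Bernoulli percolation at the same parameter, under which these columns are independent), so the long-crossing probability is at most $(1-\epsilon)^n\to 0$. RSW only delivers crossings at bounded aspect ratio, not in strips of bounded width. The paper instead bounds $\abs{\Mn}$ directly from the exponential decay of cluster sizes (Lemma~\ref{ExpDecayClusterSize}): embedding the wired box into a slightly larger box with free boundary conditions merges all boundary vertices into a single cluster, after which $\phi^0\big(\abs{C(v_0)}\geqslant An^a\big)\leqslant e^{-\psi An^a}$ applies, the conditioning costing only a harmless factor (Lemma~\ref{Mn_FK_majo_souscritique}).

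For~$S$: the bound $\log\mathbb{E}\,e^{tS}\leqslant C_0n^2t^2$ for~$|t|$ in a \emph{fixed} neighbourhood of~$0$ does not follow from finiteness of the susceptibility, and is false. By finite energy, with probability at least $e^{-c'n^2}$ the box $\Lambda(n-2)$ forms a single cluster disconnected from~$\partial\Lambda(n)$, on which event $V\geqslant(n-2)^4$; hence $\mathbb{E}_\omega\big[e^{t^2V/2}\big]\geqslant e^{t^2(n-2)^4/2-c'n^2}$, which for any fixed~$t\neq 0$ eventually exceeds $e^{C_0n^2t^2}$. Susceptibility controls only~$\mathbb{E}[V]$, not exponential moments of~$V$. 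The paper circumvents this by conditioning on $\big\{V\leqslant 2n^{a+1/2}\big\}$ and applying Hoeffding \emph{on that event}, which yields rate $n^{2a}/n^{a+1/2}=n^{a-1/2}>n$ precisely when $a>3/2$; the complementary event $\big\{V>2n^{a+1/2}\big\}$ is controlled via a BK-type negative-correlation inequality between disjoint clusters (Lemma~\ref{Mn_FK_lemme_ansatz_BK}), leading to Lemma~\ref{Mn_FK_controle_carres}. This decorrelation between distinct clusters is exactly the ingredient you flag as missing in your final paragraph but do not supply.
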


We will use estimates for the subcritical random-cluster model \correct{about} the number of vertices connected to the boundary and the size of the clusters. We will then show in section~\ref{section_Ising_majo_surcritique} how to deduce from these estimates a control of the magnetization in the Ising model with supercritical temperatures.

\subsection{Exponential decay of cluster sizes}

The \correct{next result follows from theorem~1.2 of~\cite{SharpPhaseTransition} and theorem~5.86 in~\cite{GrimmettFK}.}

\begin{lemma}
\label{ExpDecayClusterSize}
For any~$q\geqslant 1$ and~$p<p_c(q)$, there exists~$\psi(p,\,q)>0$ such that
$$\forall n,\,k\geqslant 1\quad
\forall v\in\Lambda(n)\qquad
\phi_{n,\,p,\,q}^0\Big(\,\abs{C(v)}\geqslant k\,\Big)\ \leqslant\ e^{-\psi k}\,.$$
\end{lemma}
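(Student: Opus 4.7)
The strategy is to combine the sharp phase transition result with a standard upgrade from radius to volume, plus a simple comparison between the finite- and infinite-volume measures. As a first step, I invoke theorem~1.2 of~\cite{SharpPhaseTransition}, which provides, for every~$q\geqslant 1$ and every~$p<p_c(q)$, a constant~$c=c(p,q)>0$ such that the infinite-volume free measure satisfies
$$\phi_{p,\,q}^0\Big(\,0\connecte\partial\Lambda(N)\,\Big)\ \leqslant\ e^{-cN}\qquad\text{for every }N\geqslant 1.$$

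Next, I reduce the finite-volume statement to an infinite-volume one. The event~$\acc{\abs{C(v)}\geqslant k}$ is increasing in~$\omega$, and for~$q\geqslant 1$ the usual comparison between boundary conditions (the infinite-volume free measure is obtained as the increasing weak limit of the finite-volume free measures as the box grows) yields~$\phi_{n,\,p,\,q}^0\preceq\phi_{p,\,q}^0$ on increasing cylinder events supported in~$\En$. Combining this with the translation invariance of~$\phi_{p,\,q}^0$, it is enough to establish that there exists~$\psi=\psi(p,q)>0$ such that
$$\phi_{p,\,q}^0\Big(\,\abs{C(0)}\geqslant k\,\Big)\ \leqslant\ e^{-\psi k}\qquad\text{for every }k\geqslant 1.$$

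The last step, which is the only nontrivial one, consists of upgrading exponential decay of the radius to exponential decay of the volume. This is precisely the content of theorem~5.86 in~\cite{GrimmettFK}, which I invoke directly. Heuristically, exponential decay of one-arm probabilities yields a finite exponential moment for~$\abs{C(0)}$ through a block/renewal argument, and exponential decay of the cluster size then follows by a Markov-type estimate. The principal obstacle, were one to redo this step by hand, would be that for FK-percolation the BK inequality is unavailable, so the comparison between volume and radius has to be carried out using only positive association and a decoupling at a mesoscopic scale chosen so that the one-arm bound dominates an entropy-of-position factor. Since~\cite{GrimmettFK} is available as a black box, the proof amounts to a short assembly of these three ingredients.
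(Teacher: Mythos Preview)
Your proposal is correct and follows essentially the same approach as the paper: the paper's proof of this lemma is nothing more than the sentence ``The next result follows from theorem~1.2 of~\cite{SharpPhaseTransition} and theorem~5.86 in~\cite{GrimmettFK}'', and you have simply spelled out how these two citations fit together (radius decay from the sharp phase transition, volume upgrade via Grimmett's theorem, plus the standard stochastic domination~$\phi_{n,p,q}^0\preceq\phi_{p,q}^0$ and translation invariance to reduce to the infinite-volume statement for the origin).
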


\subsection{Control of the number of vertices connected to the boundary}

\begin{lemma}
\label{Mn_FK_majo_souscritique}
For all~$a>1$, we have the following upper bound:
$$\forall q\geqslant 1\quad
\forall p<p_c(q)\quad
\forall A>0\qquad
\limsupn\,\frac{1}{n^a}\ln\Probapq\Big(\,\abs{\Mn}\geqslant An^a\,\Big)\ <\ 0\,.$$
\end{lemma}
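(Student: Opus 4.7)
I would combine the exponential decay of cluster sizes from Lemma~\ref{ExpDecayClusterSize} with an exponential Chebyshev inequality. The starting observation is that every vertex in $\Mn$ lies in the open cluster of some boundary vertex, so
$$\abs{\Mn}\ \leqslant\ \sum_{x\in\partial\Lambda(n)}\abs{C(x)}\,,$$
and $\abs{\partial\Lambda(n)}=O(n)$. Lemma~\ref{ExpDecayClusterSize} is stated only for the free measure $\Probapqf$, and the comparison $\Probapqf\preceq\Probapq$ goes in the wrong direction; however, for $p<p_c(q)$, sharpness of the random-cluster phase transition yields uniform exponential decay of connection probabilities over all boundary conditions, and in particular a constant $\psi'=\psi'(p,q)>0$ such that, for all $n$, $v\in\Lambda(n)$ and $k\geqslant 1$,
$$\Probapq\big(\abs{C(v)}\geqslant k\big)\ \leqslant\ Ce^{-\psi'k}\,.$$

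The next step is to show that, for some small $\lambda>0$ and some constant $K=K(\lambda,\,p,\,q)$, the exponential moment of $\abs{\Mn}$ under $\Probapq$ is at most $e^{Kn}$. Because the variables $\abs{C(x)}$ are positively correlated under FKG, this expectation does not factorize over boundary vertices; instead, I would explore the boundary clusters one edge at a time, using the fact that conditionally on the already-revealed edges, the probability that any frontier edge is open is uniformly bounded. This stochastically dominates the growth of each boundary-touching cluster by a subcritical exploration process whose total progeny has exponentially decaying tail, thanks to the estimate above, and summing over the at most $\abs{\partial\Lambda(n)}=O(n)$ boundary seeds yields the required moment bound. Chebyshev's inequality then gives
$$\Probapq\Big(\abs{\Mn}\geqslant An^a\Big)\ \leqslant\ e^{-\lambda An^a+Kn}\,,$$
and since $a>1$, dividing the logarithm by $n^a$ and letting $n\to\infty$ yields the desired $\limsup\leqslant-\lambda A<0$.

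The main obstacle lies in establishing the exponential moment bound: the positive FKG correlations prevent a direct product bound over boundary vertices, and the exploration argument needs to combine uniform control on conditional edge-open probabilities in the random-cluster model (for arbitrary $q\geqslant 1$) with the subcritical exponential decay of individual cluster sizes, in order to ensure that the exploration remains effectively subcritical and its total progeny stays integrable at the exponential scale.
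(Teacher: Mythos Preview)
Your proposal has the right overall shape but leaves the key step—the exponential moment bound for~$\abs{\Mn}$—as an acknowledged gap, and the exploration argument you sketch is genuinely delicate here. When exploring edge by edge in the random-cluster model, the conditional probability that a frontier edge is open can be as large as~$p$; but~$p<p_c(q)$ does not force~$p<p_c(1)=1/2$ when~$q>1$ (for instance~$p_c(2)=\sqrt{2}/(1+\sqrt{2})>1/2$), so the exploration is not dominated by a subcritical Bernoulli process, and the claim of a ``subcritical exploration process whose total progeny has exponentially decaying tail'' is not justified as stated. Your appeal to sharpness for a wired version of Lemma~\ref{ExpDecayClusterSize} is legitimate, but it does not by itself deliver the joint exponential moment over all boundary seeds that your Chebyshev step requires.

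The paper sidesteps all of this with a short trick. Embed~$\Lambda(n)$ inside~$\Lambda(n+2)$ with \emph{free} boundary condition, and condition on the event~$\mathcal{E}_n$ that all edges of~$\mathbb{E}_{n+2}\setminus\En$ are open. By the nesting property of random-cluster measures, the conditional law on~$\En$ is exactly~$\Probapq$. On~$\mathcal{E}_n$, every vertex of~$\partial\Lambda(n)$ lies in a single cluster of~$\Lambda(n+2)$, so~$\abs{\Mn}\leqslant\abs{C(v_0)}$ for any fixed~$v_0\in\partial\Lambda(n)$, and Lemma~\ref{ExpDecayClusterSize}---which is stated for the free measure, precisely what is available here---gives
\[
\phi_{n+2,\,p,\,q}^0\Big(\abs{C(v_0)}\geqslant An^a\Big)\ \leqslant\ e^{-\psi An^a}\,.
\]
The finite-energy property bounds~$\phi_{n+2,\,p,\,q}^0(\mathcal{E}_n)$ from below by a quantity of order~$c^n$, and since~$a>1$ this correction is negligible at scale~$n^a$. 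The whole problem thus reduces to a single cluster, and no exponential moment argument is needed at all.
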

\begin{proof}
Let~$a>1$,~$q\geqslant 1$,~$p<p_c(q)$,~$A>0$ and~$n\geqslant 1$. By the nesting property of random-cluster measures (lemma~4.13 in~\cite{GrimmettFK}), we have
\begin{gather*}
\Probapq\Big(\,\abs{\Mn}\geqslant An^a\,\Big)
\ =\ \phi_{n+2,\,p,\,q}^0\Big(\,\abs{\Mn}\geqslant An^a\ \Big|\ \mathcal{E}_n\,\Big)\,,\numberthis\label{decay_Mn_cond}\\
\text{where}\qquad\mathcal{E}_n\ =\ \Big\{\,\forall e\in\mathbb{E}_{n+2}\backslash\En\quad \omega(e)=1\,\Big\}\,.
\end{gather*}
Yet, all the vertices of the boundary~$\partial\Lambda(n)$ are connected by paths using only edges of~$\mathbb{E}_{n+2}\backslash\En$. Therefore, fixing an arbitrary vertex~$v_0\in\partial\Lambda(n)$, we have
$$\phi_{n+2,\,p,\,q}^0\Big(\,\big\{\abs{\Mn}\geqslant An^a\big\}\cap\mathcal{E}_n\,\Big)
\ \leqslant\ \phi_{n+2,\,p,\,q}^0\Big(\,\abs{C(v_0)}\geqslant An^a\,\Big)
\ \leqslant\ e^{-\psi An^a}\,,$$
where~$\psi=\psi(p,\,q)$ is the constant given by lemma~\ref{ExpDecayClusterSize}.
\correct{Combining this with~(\ref{decay_Mn_cond}) and using the finite-energy property (see theorem~3.1 in~\cite{GrimmettFK}), we get}
$$\correct{\Probapq\Big(\,\abs{\Mn}\geqslant An^a\,\Big)
\ \leqslant\ \frac{e^{-\psi An^a}}{\phi_{n+2,\,p,\,q}^0\big(\,\mathcal{E}_n\,\big)}}
\ \leqslant\ \left(\frac{p+q(1-p)}{p}\right)^{8n+4}e^{-\psi An^a}\,.$$
Given that~$a>1$, this \correct{implies the proclaimed result}.
\end{proof}

\subsection{The cost of a change of boundary conditions}

\begin{lemma}
\label{lemme_change_BC}
\correct{Let~$q\geqslant 1$,~$p\in[0,1]$,~$\xi\in\acc{0,1}$,~$n\geqslant 1$ and}~\smash{$\mathcal{A}\subset\acc{0,1}^{\En}$}. Then
$$\phi_{n,\,p,\,q}^\xi\big(\mathcal{A}\big)\ \leqslant\ q^{4n}\phi_{n,\,p,\,q}^{1-\xi}\big(\mathcal{A}\big)\,.$$
\end{lemma}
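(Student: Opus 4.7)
My approach is to compare the densities of $\phi_{n,\,p,\,q}^0$ and $\phi_{n,\,p,\,q}^1$ pointwise, since both measures share the same edge-weight factors and differ only through the cluster-counting exponent and the partition function. For every configuration $\omega$, the ratio of densities reads
$$\frac{\phi_{n,\,p,\,q}^\xi(\omega)}{\phi_{n,\,p,\,q}^{1-\xi}(\omega)}\ =\ q^{k^\xi(\omega)-k^{1-\xi}(\omega)}\cdot\frac{Z_{n,\,p,\,q}^{1-\xi}}{Z_{n,\,p,\,q}^\xi}\,,$$
so it will be enough to bound this ratio by $q^{4n}$ uniformly in $\omega$, and then sum over~$\omega\in\mathcal{A}$.

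The key combinatorial identity is that $k^0(\omega)-k^1(\omega)=b(\omega)-1$ for every~$\omega$, where $b(\omega)$ denotes the number of distinct open clusters of~$\omega$ that intersect~$\partial\Lambda(n)$: passing from free to wired boundary conditions amounts to merging precisely those $b(\omega)$ clusters into a single one. Since $b(\omega)\geqslant 1$ and $b(\omega)\leqslant\abs{\partial\Lambda(n)}\leqslant 4n$, and since $q\geqslant 1$, this produces the uniform pointwise control
$$1\ \leqslant\ q^{k^0(\omega)-k^1(\omega)}\ \leqslant\ q^{4n}\,.$$

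Injecting this bound term by term into the defining sum of each partition function immediately yields the companion estimate $Z_{n,\,p,\,q}^1\leqslant Z_{n,\,p,\,q}^0\leqslant q^{4n}Z_{n,\,p,\,q}^1$. It then remains to combine the cluster-count inequality and the partition-function inequality in each of the two cases $\xi\in\acc{0,1}$: when $\xi=0$ the density ratio is at most $q^{4n}\cdot 1$, and when $\xi=1$ it is at most $1\cdot q^{4n}$. Summing over $\omega\in\mathcal{A}$ concludes.

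There is essentially no obstacle here; the argument reduces to the observation that boundary conditions only modify the measure through the merging of at most $\abs{\partial\Lambda(n)}$ boundary clusters, which explains why the cost is exponential in the boundary size $4n$ rather than in the volume $n^2$. The only point requiring some minor care is to treat the two cases $\xi=0$ and $\xi=1$ separately, so as to exploit the cluster-count bound in one direction and the partition-function bound in the other.
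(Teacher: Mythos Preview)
Your proof is correct and follows essentially the same approach as the paper: both rest on the observation that $k^0(\omega)-k^1(\omega)$ equals the number of boundary clusters minus one, hence lies between $0$ and $4n$. The paper's proof simply states the inequality $k^1(\omega)\leqslant k^0(\omega)\leqslant k^1(\omega)+4n$ and leaves the partition-function comparison implicit, whereas you spell out the density ratio and treat the two cases $\xi=0,1$ explicitly; the underlying argument is the same.
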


\begin{proof}
\correct{This inequality is a consequence of the fact that the number of clusters touching the boundary is bounded by~$\abs{\partial\Lambda(n)}\leqslant 4n$, which entails that}
$$\forall\omega\in\acc{0,1}^{\En}\qquad
k^1(\omega)\ \leqslant\ k^0(\omega)\ \leqslant\ k^1(\omega)+4n\,.$$
\correct{implying the result.}
\end{proof}

\subsection{Negative correlation between cluster sizes}

\label{section_ansatz_BK}

We prove here a correlation inequality between the cardinalities of pairwise disjoint clusters, which is in a way a surrogate of the BK inequality, which is missing in the random-cluster model. \correct{Following an approach similar to that of~\cite{BC96} or~\cite{BG13}, the idea is that, once an open cluster is explored, the rest of the box behaves like a FK model with free boundary condition on the closed border of the explored cluster. This allows us to derive a correlation inequality for increasing events occurring in disjoint clusters.}
\correct{Let~$n\geqslant 1$ and~$N\in\acc{1,\,\ldots,\,n^2}$. For any~$k_1,\,\ldots,\,k_N\in\N$ and any pairwise disjoint~$v_1,\,\ldots,\,v_N\in\Lambda(n)$, let}
$$\mathcal{Q}_N\big(v_1,\,\ldots,\,v_N,\,k_1,\,\ldots,\,k_N\big)
\ =\ \acc{\,\begin{array}{c}
\forall i\leqslant N\quad \abs{C(v_i)}\geqslant k_i\,,\\
\forall i\neq j\quad v_i\centernot\longleftrightarrow v_j
\end{array}\,}\,.$$

\begin{lemma}
\label{Mn_FK_lemme_ansatz_BK}
Let~$q\geqslant 1$,~$p\in[0,1]$,~$\xi\in\acc{0,1}$,~$n\geqslant 1$ and~$N\in\acc{1,\,\ldots,\,n^2}$. For any pairwise distinct~$v_1,\,\ldots,\,v_N\in\Lambda(n)$ and for any~$k_1,\,\ldots,\,k_N\in\N$, we have
$$\Probapqxi\Big(\,\mathcal{Q}_N\big(v_1,\,\ldots,\,v_N,\,k_1,\,\ldots,\,k_N\big)\,\Big)
\ \leqslant\ \prod_{i=1}^N{\,\Probapqxi\Big(\,\abs{C\left(v_i\right)}\geqslant k_i\,\Big)}\,.$$
\end{lemma}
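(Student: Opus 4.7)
The plan is to prove the inequality by induction on~$N$, after strengthening the statement to apply to any random-cluster measure on any finite subgraph of~$\Z^2$ with arbitrary boundary conditions, so that the inductive hypothesis can be applied to the conditional measures that arise. The base case~$N=1$ is an equality.

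For the inductive step from~$N-1$ to~$N$, I would condition on the cluster of~$v_1$. Writing
$$\Probapqxi\bigl(\mathcal{Q}_N\bigr)\ =\ \sum_{A}{\Probapqxi\bigl(C(v_1)=A\bigr)\,\Probapqxi\bigl(\mathcal{Q}_N\,\big|\,C(v_1)=A\bigr)}\,,$$
where the sum runs over the finite connected~$A\subset\Lambda(n)$ containing~$v_1$, disjoint from~$\{v_2,\ldots,v_N\}$ and with~$|A|\geqslant k_1$. The event~$\{C(v_1)=A\}$ forces every edge of~$\partial^e A$ to be closed, and by the product form of the random-cluster density together with the fact that closing~$\partial^e A$ decouples the cluster factor between~$A$ and~$\Lambda(n)\setminus A$, the conditional restriction of~$\omega$ to~$\Edge{\Lambda(n)\setminus A}$ is distributed as a random-cluster measure~$\mu_A$ on the subgraph~$(\Lambda(n)\setminus A,\Edge{\Lambda(n)\setminus A})$, with free boundary conditions along~$\partial A$ and the original boundary condition~$\xi$ along~$\partial\Lambda(n)$. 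Under this conditioning, the residual event coincides with~$\mathcal{Q}_{N-1}(v_2,\ldots,v_N,k_2,\ldots,k_N)$ in the subgraph, so the inductive hypothesis applied to~$\mu_A$ gives
$$\Probapqxi\bigl(\mathcal{Q}_N\,\big|\,C(v_1)=A\bigr)\ \leqslant\ \prod_{i=2}^N{\mu_A\bigl(\abs{C(v_i)}\geqslant k_i\bigr)}\,.$$

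The last step is to replace each~$\mu_A(\abs{C(v_i)}\geqslant k_i)$ by the unconditional probability~$\Probapqxi(\abs{C(v_i)}\geqslant k_i)$. Since, on the event~$\{C(v_1)=A\}$, the cluster of~$v_i$ in the full graph coincides with its cluster in the subgraph, one has
$$\mu_A\bigl(\abs{C(v_i)}\geqslant k_i\bigr)\ =\ \Probapqxi\Bigl(\abs{C(v_i)}\geqslant k_i\,\Big|\,\forall e\in\partial^e A\quad\omega(e)=0\Bigr)\,,$$
and I would apply the FKG inequality to~$\Probapqxi$: the event~$\{\abs{C(v_i)}\geqslant k_i\}$ is increasing whereas~$\{\omega(e)=0\ \forall e\in\partial^e A\}$ is decreasing, so this conditioning cannot raise its probability. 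Summing over~$A$ and using the bound~$\sum_A\Probapqxi(C(v_1)=A)\leqslant\Probapqxi(\abs{C(v_1)}\geqslant k_1)$ yields the desired product inequality.

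The delicate point is the Markov-type description of~$\Probapqxi(\cdot\,|\,C(v_1)=A)$ on~$\Edge{\Lambda(n)\setminus A}$: one must check carefully, from the explicit density of the random-cluster model, that this conditional restriction really is a standard FK measure on the subgraph, so that the inductive hypothesis is actually applicable. Once this decoupling is in place, the rest of the argument is a simple assembly of the FKG inequality with the induction, requiring no geometric or analytic input.
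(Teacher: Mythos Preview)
Your proposal is correct and follows the same strategy as the paper: induction on~$N$, conditioning on explored clusters via their closed edge-boundary (the domain Markov property you flag as the ``delicate point'' is standard), and then FKG to remove the conditioning. The paper organizes the induction slightly differently---it conditions on the first~$N$ clusters $C(v_1),\ldots,C(v_N)$ and peels off the $(N{+}1)$-th factor directly in the original box~$\Lambda(n)$, thereby avoiding the need to strengthen the statement to general subgraphs and boundary conditions---but the underlying idea is identical.
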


\begin{proof}
Having fixed~$q\geqslant 1$,~$p\in[0,1]$,~$\xi\in\acc{0,1}$ and~$n\geqslant 1$, we proceed by induction on~$N$.
The result is straightforward for~$N=1$. Let~$N\in\acc{1,\,\ldots,\,n^2-1}$ be such that the inequality holds for~$N$. Let~$v_1,\,\ldots,\,v_{N+1}\in\Lambda(n)$ be pairwise disjoint vertices and let~$k_1,\,\ldots,\,k_{N+1}\in\N$. Let us consider pairwise disjoint connected subsets~$C_1,\,\ldots,\,C_{N+1}\subset\Lambda(n)$ such that~$v_i\in C_i$ for every~$i\in\acc{1,\,\ldots,\,N+1}$. We consider the set
$$F\ =\ \partial^e\Bigg(\,\bigcup_{i=1}^N{C_i}\,\Bigg)\ =\ \acc{\,\acc{x,\,y}\in\En\ :\ x\in\bigcup_{i=1}^N{C_i}\ \text{ and }\ y\in\Lambda(n)\backslash\bigcup_{i=1}^N{C_i}\,}\,.$$
If~$C(v_i)=C_i$ for all~$i\leqslant n$, then all the edges of~$F$ must be closed, \correct{whence}
\begin{align*}
&\Probapqxi\Big(\,\forall i\leqslant N+1\quad C(v_i)=C_i\,\Big)
\ =\ \Probapqxi\Big(\,\forall i\leqslant N\quad C(v_i)=C_i\,\Big)\\
&\qquad\times\Probapqxi\Big(\,C(v_{N+1})=C_{N+1}\ \Big|\ \omega\restrict{F}=0\ \,\text{and}\ \,\forall i\leqslant N\quad C(v_i)=C_i\,\Big)\\
&\ =\ \Probapqxi\Big(\,\forall i\leqslant N\quad C(v_i)=C_i\,\Big)
\times\Probapqxi\Big(\,C(v_{N+1})=C_{N+1}\ \Big|\ \omega\restrict{F}=0\,\Big)\,,
\end{align*}
\correct{where we have used that, conditionally on the fact that~$\omega\equiv 0$ on~$F$, the events~$\big\{C(v_{N+1})=C_{N+1}\big\}$ and~$\big\{\forall i\leqslant N\quad C(v_i)=C_i\big\}$
are independent under~\smash{$\Probapqxi$} (in the vocabulary of~\cite{BC96}, the event~\smash{$\big\{\omega\restrict{F}=0\big\}$} is a decoupling event for these two events).}
Summing over all the connected sets~$C_{N+1}$ such that
$$v_{N+1}\ \in\ C_{N+1}\ \subset\ \Lambda(n)\,\backslash\,\bigcup_{i=1}^N{C_i}
\qquadet
\abs{C_{N+1}}\ \geqslant\ k_{N+1}\,,$$
\correct{and then using the FKG inequality, one gets}
\begin{align*}
&\Probapqxi\Big(\,\abs{C(v_{N+1})}\geqslant k_{N+1}\quadet \forall i\leqslant N\quad C(v_i)=C_i\,\Big)\\
&\quad \leqslant\ \Probapqxi\Big(\,\forall i\leqslant N\quad C(v_i)=C_i\,\Big)\,\Probapqxi\Big(\,\abs{C(v_{N+1})}\geqslant k_{N+1}\ \Big|\ \omega\restrict{F}=0\,\Big)\\
&\quad \leqslant\ \Probapqxi\Big(\,\forall i\leqslant N\quad C(v_i)=C_i\,\Big)\,\Probapqxi\Big(\,\abs{C(v_{N+1})}\geqslant k_{N+1}\,\Big)\,.
\end{align*}
\correct{Summing over all connected and pairwise disjoint~$C_1,\,\ldots,\,C_{N}\subset\Lambda(n)$ such that~$v_i\in C_i$ and~$\abs{C_i}\geqslant k_i$ for every~$i\leqslant N$ concludes the induction step.}
\end{proof}

\subsection{Control of the size of the clusters}

We obtain here an exponential inequality on the tail of the distribution of the cardinalities of the clusters, uniformly on any segment included in~$[0,\,p_c)$.

\begin{lemma}
\label{lemme_intermediaire_tailles_clusters}
For~$A,\,b,\,c\geqslant 0$ and~$n\geqslant 1$, we have the upper bound
\begin{gather*}
\forall q\geqslant 1\quad
\forall p_0<p_c(q)\quad
\forall A>0\quad
\forall b> 0\quad
\forall c\geqslant 0\\
\limsupn\,\frac{1}{n^{c}}\,\sup\limits_{p\leqslant p_0}\,\ln\Probapqf\Big(\,\mathcal{D}_n(A,\,b,\,c)\,\Big)\ <\ 0\,,\\
\text{where}\qquad
\mathcal{D}_n(A,\,b,\,c)
\ =\ \Bigg\{\,\sum_{C\in\mathcal{C}_n\,:\,\abs{C}\geqslant n^b}{\abs{C}}\geqslant A n^c\,\Bigg\}\,.
\end{gather*}
\end{lemma}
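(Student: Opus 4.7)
The plan is to combine three ingredients: the exponential decay of cluster sizes given by Lemma~\ref{ExpDecayClusterSize}, the negative-correlation inequality of Lemma~\ref{Mn_FK_lemme_ansatz_BK}, and the stochastic monotonicity in~$p$ of~$\Probapqf$ (which holds for~$q\geqslant 1$) applied to the increasing event~$\big\{\abs{C(v)}\geqslant k\big\}$. Setting~$\psi_0=\psi(p_0,\,q)>0$, these three tools together yield the uniform pointwise tail bound
\[
\forall\,p\leqslant p_0\quad\forall v\in\Lambda(n)\quad\forall k\geqslant 1\qquad
\Probapqf\Big(\,\abs{C(v)}\geqslant k\,\Big)\ \leqslant\ e^{-\psi_0 k}\,.
\]

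On the event~$\mathcal{D}_n(A,\,b,\,c)$, there exist pairwise disjoint clusters~$C_1,\,\ldots,\,C_N\in\mathcal{C}_n$ with~$\abs{C_i}\geqslant K_n:=\Ceil{n^b}$ and~$\sum_i\abs{C_i}\geqslant L_n:=\Ceil{An^c}$. Picking an arbitrary vertex~$v_i\in C_i$ and setting~$k_i=\abs{C(v_i)}$, the event~$\mathcal{Q}_N(v_1,\,\ldots,\,v_N,\,k_1,\,\ldots,\,k_N)$ holds. A union bound over~$N$, over the choice of pairwise distinct~$v_1,\,\ldots,\,v_N\in\Lambda(n)$, and over the integer tuples~$(k_i)$ satisfying~$k_i\geqslant K_n$ and~$\sum_ik_i\geqslant L_n$, combined with Lemma~\ref{Mn_FK_lemme_ansatz_BK} and the uniform tail bound above, gives
\[
\Probapqf\Big(\,\mathcal{D}_n(A,\,b,\,c)\,\Big)
\ \leqslant\ \sum_{N\geqslant 1}n^{2N}\sum_{\substack{k_1,\,\ldots,\,k_N\geqslant K_n\\ k_1+\cdots+k_N\geqslant L_n}}\prod_{i=1}^N e^{-\psi_0 k_i}\,.
\]

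The remaining task is to estimate this combinatorial sum, which is where the real work lies. The trick is to split~$\prod_i e^{-\psi_0 k_i}=e^{-\psi_0 S/2}\cdot e^{-\psi_0 S/2}$ with~$S=\sum_ik_i$: using the lower bound~$S\geqslant\max(NK_n,\,L_n)\geqslant(NK_n+L_n)/2$ in the first factor and summing the second factor over unconstrained~$k_i\geqslant K_n$, one obtains a bound of~$C^Ne^{-\psi_0(NK_n+L_n)/4}$ with~$C=(1-e^{-\psi_0/2})^{-1}$. Inserting this back and summing over~$N$ produces a geometric series with ratio~$n^2Ce^{-\psi_0K_n/4}$, which tends to~$0$ since~$b>0$. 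The resulting bound is of the form~$\Probapqf(\mathcal{D}_n)\leqslant C'n^2e^{-\psi_0(K_n+L_n)/4}$, which implies the claimed exponential decay at rate~$n^c$ uniformly in~$p\leqslant p_0$. The main obstacle is precisely this union-bound step: since no genuine BK inequality is available for the random-cluster model, one must invoke the weaker Lemma~\ref{Mn_FK_lemme_ansatz_BK} and absorb the combinatorial overhead of the~$n^{2N}$ vertex choices into the geometric series, which is feasible only because the factor~$e^{-\psi_0n^b}$ dominates any polynomial in~$n$ as soon as~$b>0$.
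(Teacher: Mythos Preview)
Your proof is correct and follows the same strategy as the paper: decompose~$\mathcal{D}_n$ as a union of the events~$\mathcal{Q}_N$, apply Lemma~\ref{Mn_FK_lemme_ansatz_BK} to factorize, and use the exponential tail from Lemma~\ref{ExpDecayClusterSize} (made uniform in~$p\leqslant p_0$ via stochastic monotonicity). The only difference lies in the combinatorial bookkeeping: the paper observes that one needs at most~$M=\Ceil{An^{c-b}}$ clusters to witness the event (since each contributes at least~$n^b$ to a sum reaching~$An^c$), so the union bound has at most~$M(n^2)^{2M}$ terms, each contributing~$e^{-\psi An^c}$; you instead sum over all~$N\geqslant 1$ and control the resulting series by the splitting~$e^{-\psi_0 S}=e^{-\psi_0 S/2}\cdot e^{-\psi_0 S/2}$, which yields a convergent geometric series in~$N$. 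Your version is slightly more robust (it covers the edge case~$c=0$ without further comment, whereas the paper's final estimate~$O(\ln n/n^b)$ tacitly assumes~$c>0$), while the paper's truncation~$N\leqslant M$ is a bit more direct.
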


\begin{proof}
Let~$q\geqslant 1$,~$p\leqslant p_0<p_c(q)$,~$A>0$,~$b> 0$,~$c\geqslant 0$ and~$n\geqslant 1$. Defining~\smash{$M=\Ceil{An^{c-b}}$}, \correct{we have}
$$\mathcal{D}_n(A,\,b,\,c)\subset\bigcup_{N=1}^M{\bigcup_{\substack{v_1,\,\ldots,\,v_{N}\in\Lambda(n)\\ \text{pairwise distinct}}}{\bigcup_{\substack{n^b\leqslant k_1,\,\ldots,\,k_N\leqslant n^2\\ k_1+\cdots+k_N\geqslant A n^c}}{\mathcal{Q}_N\big(v_1,\,\ldots,\,v_{N},\,k_1,\,\ldots,\,k_N\big)}}}\,,$$
where~$\mathcal{Q}_N$ is the event defined in section~\ref{section_ansatz_BK}. Using lemma~\ref{Mn_FK_lemme_ansatz_BK}, it follows that
\begin{align*}
\Probapqf\Big(\,&\mathcal{D}_n(A,\,b,\,c)\,\Big)\\
\ &\leqslant\ \sum_{N=1}^M{\,\sum_{\substack{v_1,\,\ldots,\,v_{N}\in\Lambda(n)\\ \text{pairwise distinct}}}{\,\sum_{\substack{n^b\leqslant k_1,\,\ldots,\,k_N\leqslant n^2\\ k_1+\cdots+k_N\geqslant A n^c}}{\ \prod_{i=1}^{N}{\,\Probapqf\Big(\,\abs{C(v_i)}\geqslant k_i\,\Big)}}}}\\
\ &\leqslant\ M\left(n^2\right)^{2M}\exp\Big(\,-\psi A n^c\,\Big)\,,
\end{align*}
\correct{where~$\psi(p_0,\,q)$ comes from lemma~\ref{ExpDecayClusterSize}.}
This being true for all~$p\leqslant p_0$, we get
$$\frac{1}{n^c}\sup\limits_{p\leqslant p_0}
\ln\Probapqf\Big(\mathcal{D}_n(A,\,b,\,c)\Big)
\,\leqslant\,\frac{\ln M+4M\ln n}{n^c}-\psi A
\,=\,-\psi A+O\left(\frac{\ln n}{n^b}\right)\,.$$
\correct{Given that~$b>0$, this concludes our proof.}
\end{proof}

This inequality allows us to obtain a uniform control on the sum of the squares of the cardinalities of the clusters in the subcritical regime, when~$p$ does not get too close to~$p_c$. 
In practice, we could do without uniformity since this variable is an increasing variable (opening an edge connecting two clusters~$C_1$ and~$C_2$ increases this sum by~\smash{$(\abs{C_1}+\abs{C_2})^2-\abs{C_1}^2-\abs{C_2}^2\geqslant 0$}), but the uniformity in lemma~\ref{lemme_intermediaire_tailles_clusters} will be needed for the regime~$T<T_c$.

\begin{lemma}
\label{Mn_FK_controle_carres}
For~$a>3/2$, we have the following control in the subcritical regime:
$$\forall q\geqslant 1\ 
\forall p_0<p_c(q)\quad
\limn\frac{1}{n}\sup\limits_{p\leqslant p_0}\ln\Probapq\Bigg(\sum_{C\in\mathcal{C}_n}{\abs{C}^2}>2n^{a+1/2}\Bigg)\,=\,-\infty\,.$$
\end{lemma}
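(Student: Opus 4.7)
The plan is to decompose $\sum_C\abs{C}^2$ by splitting clusters at a threshold $n^b$ with $0<b<a-3/2$ (this interval is nonempty because $a>3/2$). The contribution from small clusters is controlled deterministically: since $\sum_C\abs{C}=\abs{\Lambda(n)}=n^2$, one has
$$\sum_{C\in\mathcal{C}_n\,:\,\abs{C}\leqslant n^b}\abs{C}^2\ \leqslant\ n^b\sum_C\abs{C}\ =\ n^{b+2}\,,$$
which is strictly less than $n^{a+1/2}$ for all $n$ large enough, since $b+2<a+1/2$. Consequently, on the event $\big\{\sum_C\abs{C}^2>2n^{a+1/2}\big\}$, the large clusters alone must contribute more than $n^{a+1/2}$.

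The key reduction is a quadratic-to-linear bound: writing $S=\sum_{C\,:\,\abs{C}>n^b}\abs{C}$, each large cluster has size at most $S$, hence
$$\sum_{C\,:\,\abs{C}>n^b}\abs{C}^2\ \leqslant\ \Big(\max_{C\,:\,\abs{C}>n^b}\abs{C}\Big)\,S\ \leqslant\ S^2\,.$$
Therefore if the large-cluster sum of squares exceeds $n^{a+1/2}$ then $S>n^{a/2+1/4}$, which is precisely the event $\mathcal{D}_n(1,b,a/2+1/4)$ appearing in Lemma~\ref{lemme_intermediaire_tailles_clusters}. Applying that lemma with $c=a/2+1/4$ yields some $\delta>0$ such that, for $n$ large,
$$\sup_{p\leqslant p_0}\Probapqf\big(\mathcal{D}_n(1,b,a/2+1/4)\big)\ \leqslant\ e^{-\delta n^{a/2+1/4}}\,.$$

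To transfer from free to wired boundary conditions, I invoke Lemma~\ref{lemme_change_BC}, which costs a factor $q^{4n}$. Combining everything gives
$$\sup_{p\leqslant p_0}\Probapq\Bigg(\sum_{C\in\mathcal{C}_n}\abs{C}^2>2n^{a+1/2}\Bigg)\ \leqslant\ q^{4n}\,e^{-\delta n^{a/2+1/4}}\,.$$
Taking logarithms and dividing by $n$, the right-hand side is $4\ln q-\delta n^{a/2-3/4}+o(1)$, which tends to $-\infty$ exactly because $a/2-3/4>0$, i.e.\ $a>3/2$. The only delicate point is this quadratic-to-linear reduction $\sum\abs{C}^2\leqslant S^2$ together with the choice of $b$: it is what converts the quadratic observable into a quantity that Lemma~\ref{lemme_intermediaire_tailles_clusters} can control, and the resulting exponent $c=a/2+1/4$ must strictly exceed $1$ in order to dominate the linear overhead $q^{4n}$ coming from the boundary-condition swap—which is precisely the hypothesis $a>3/2$.
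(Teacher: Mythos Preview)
Your proof is correct and follows essentially the same approach as the paper's: split at a threshold $n^b$, control the small clusters deterministically via $\sum_C\abs{C}=n^2$, reduce the large-cluster contribution to the linear quantity $S$ via $\sum\abs{C}^2\leqslant S^2$, invoke Lemma~\ref{lemme_intermediaire_tailles_clusters} with $c=a/2+1/4$, and pay $q^{4n}$ to switch boundary conditions. The only difference is cosmetic: the paper takes $b=a-3/2$ exactly (so the small-cluster bound is $n^{a+1/2}$ on the nose and the total is $\leqslant 2n^{a+1/2}$), whereas you take any $b\in(0,\,a-3/2)$ (so the small-cluster bound is $o(n^{a+1/2})$, forcing the large clusters to exceed $n^{a+1/2}$).
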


\begin{proof}
Let~$a>3/2$,~$q\geqslant 1$,~$p\leqslant p_0<p_c(q)$ and~$n\geqslant 1$. \correct{Using the notation of lemma~\ref{lemme_intermediaire_tailles_clusters}, if the event~$\mathcal{D}_n(1,\,a-3/2,\,a/2+1/4)$ does not occur, then we have}
\begin{align*}
\sum_{C\in\mathcal{C}_n}{\abs{C}^2}
\ &=\ \sum_{C\in\mathcal{C}_n\,:\,\abs{C}<n^{a-3/2}}{\abs{C}^2}\,+\,\sum_{C\in\mathcal{C}_n\,:\,\abs{C}\geqslant n^{a-3/2}}{\abs{C}^2}\\
\ &\leqslant\ n^{a-3/2}\times n^2\,+\,\left(n^{a/2+1/4}\right)^2\phantom{\demi}
\ =\ 2n^{a+1/2}\,.
\end{align*}
We can deduce that
\begin{align*}
\Probapqf\Bigg(\,\mathcal{D}_n\left(1,\ a-\frac{3}{2},\ \frac{a}{2}+\frac{1}{4}\right)\,\Bigg)
\ &\geqslant\ \Probapqf\Bigg(\,\sum_{C\in\mathcal{C}_n}{\abs{C}^2}\,>\,2n^{a+1/2}\,\Bigg)\\
\ &\geqslant\ \frac{1}{q^{4n}}\Probapq\Bigg(\,\sum_{C\in\mathcal{C}_n}{\abs{C}^2}\,>\,2n^{a+1/2}\,\Bigg)\,,
\end{align*}
\correct{where we have used lemma~\ref{lemme_change_BC} to change the boundary conditions.
Besides, given that~$a/2+1/4>1$, lemma~\ref{lemme_intermediaire_tailles_clusters} ensures that}
$$\limn\,\frac{1}{n}\,\sup\limits_{p\leqslant p_0}\,\ln\Probapqf\Bigg(\,\mathcal{D}_n\left(1,\,a-\frac{3}{2},\,\frac{a}{2}+\frac{1}{4}\right)\,\Bigg)\ =\ -\infty\,.$$
\correct{yielding the desired result.}
\end{proof}

\subsection{Moving to the Ising model}

\label{section_Ising_majo_surcritique}

We are now in a position to prove the exponential decay result above~$\correct{T_c}$

\begin{proof}[Proof of lemma~\ref{Ising_majo_surcritique}]
Let~$a>3/2$,~$A>0$ and~$T\geqslant T_0>T_c$. To handle the magnetization in the Ising model at temperature~$T$, we use the Edwards-Sokal coupling (see \correct{proposition~\ref{lemme_Edwards_Sokal}}). Thus, we set~\smash{$p=1-e^{-2/T}$} and~\smash{$p_0=1-e^{-2/T_0}$}, so that~$p\leqslant p_0<p_c(2)$, \correct{and we take~$\omega\sim\Probap$}. For each~$C\subset\Lambda(n)$, we draw~$\varepsilon_C$ equally distributed on~$\acc{-,+}$, the variables~$(\varepsilon_C)_{C\subset\Lambda(n)}$ being mutually independent and independent of~$\omega$. This represents many more variables than necessary, but it makes notations more concise. We write~$\Proba$ for the joint law of~$\omega$ and~$(\varepsilon_C)_{C\subset\Lambda(n)}$, \correct{and we define the spin configuration}
$$\sigma\ :\ x\in\Lambda(n)\ \longmapsto\ \left\{\begin{aligned}
& +\text{ if }x\in\Mn(\omega)\,,\\
&\ \varepsilon_C\text{ if }x\in C\in\Clusdec(\omega)\,.
\end{aligned}\right.$$
The configuration~$\sigma$ is then distributed according to~$\mu_{n,\,T}^+$, and \correct{we have}
$$m(\sigma)\ =\ \abs{\Mn(\omega)}+\sum_{C\in\Clusdec(\omega)}{\abs{C}\varepsilon_C}\,.$$
Therefore, \correct{by conditioning on~$\sum_{C\in\Clusdec}{\abs{C}^2}$, we can write}
\begin{align*}
\mu_{n,\,T}^+&\Big(\,\abs{m}\geqslant An^a\,\Big)
\ \leqslant\ \Probap\left(\,\abs{\Mn}\,\geqslant\,\frac{A}{2}n^a\,\right)\\
&+\Probap\Bigg(\,\sum_{C\in\Clusdec}{\abs{C}^2}\,>\,2n^{a+1/2}\,\Bigg)\\
&+\Proba\left(\,\Bigg|\sum_{C\in\Clusdec(\omega)}{\abs{C}\varepsilon_C}\Bigg|\,\geqslant\,\frac{A}{2}n^a\ \biggg|\ \sum_{C\in\Clusdec}{\abs{C}^2}\,\leqslant\,2n^{a+1/2}\,\right)\,.\numberthis\label{eq8448}
\end{align*}
It follows from Hoeffding's inequality (see~\cite{Hoeffding}) that
$$\Proba\left(\,\Bigg|\sum_{C\in\Clusdec(\omega)}{\abs{C}\varepsilon_C}\Bigg|\,\geqslant\,\frac{A}{2}n^a\ \biggg|\ \sum_{C\in\Clusdec}{\abs{C}^2}\,\leqslant\,2n^{a+1/2}\,\right)
\ \leqslant\ 2\exp\left(-\frac{A^2}{16}n\right)\,,$$
where we have used the fact that~$a>3/2$.
\correct{Plugging this in~(\ref{eq8448}) and taking the supremum over~$T\geqslant T_0$, we obtain}
\begin{multline*}
\sup_{T\geqslant T_0}\mu_{n,\,T}^+\Big(\,\abs{m}\geqslant An^a\,\Big)
\ \leqslant\ \phi_{n,\,p_0,\,2}^1\left(\,\abs{\Mn}\,\geqslant\,\frac{A}{2}n^a\,\right)\\
+\sup\limits_{p\leqslant p_0}\Probap\Bigg(\,\sum_{C\in\Clusdec}{\abs{C}^2}\,>\,2n^{a+1/2}\,\Bigg)
+2\exp\left(-\frac{A^2}{16}n\right)\,.
\end{multline*}
Combining this with lemmas~\ref{Mn_FK_majo_souscritique} and~\ref{Mn_FK_controle_carres} yields the desired result.
\end{proof}

\section{Exponential decay for subcritical temperatures}
\label{bigsec_souscritique}

The goal of this section is to prove the following estimate:

\begin{lemma}
\label{Ising_majo_souscritique}
For all~$a<2$, we have
$$\forall\,T_0<T_c\quad
\forall A>0\qquad
\limsupn\,\frac{1}{n}\,\sup_{T\leqslant T_0}\,\ln\mu_{n,\,T}^+\Big(\,m\leqslant An^a\,\Big)\ <\ 0\,.$$
\end{lemma}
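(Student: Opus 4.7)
Set $p = 1-e^{-2/T}$ and $p_0 = 1-e^{-2/T_0}$, so that $p \geq p_0 > p_c(2)$. By planar duality (proposition \ref{thm_dualite}), $\omega^\star \sim \phi_{n-1,p^\star,2}^0$ is subcritical with $p^\star \leq p_0^\star < p_c(2)$, so the uniform subcritical estimates of section \ref{bigsec_surcritique} (lemmas \ref{ExpDecayClusterSize}, \ref{Mn_FK_majo_souscritique}, \ref{Mn_FK_controle_carres}) apply to $\omega^\star$. Mirroring the proof of lemma \ref{Ising_majo_surcritique}, the Edwards-Sokal coupling (proposition \ref{lemme_Edwards_Sokal}) gives
\begin{equation*}
m(\sigma)\ =\ |\mathcal{M}_n(\omega)|\,+\,S, \qquad S\,=\,\sum_{C \in \mathcal{C}_n^-(\omega)} \varepsilon_C |C|,
\end{equation*}
where $\omega\sim\Probap$ and $(\varepsilon_C)$ are independent uniform signs. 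Since $a<2$, for $n$ large we have $An^a\leq\alpha n^2$ for any constant $\alpha>0$ to be chosen depending only on $p_0$, so
\begin{equation*}
\{m\leq An^a\}\ \subset\ \{|\mathcal{M}_n|\leq 2\alpha n^2\}\,\cup\,\{|S|\geq\alpha n^2\}.
\end{equation*}

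For the $|S|$ term, conditioning on $\omega$ and applying Hoeffding's inequality yields $\mathbb{P}(|S|\geq\alpha n^2\mid\omega) \leq 2\exp(-\alpha^2 n^4/(2\sum_C|C|^2))$. Each finite primal cluster $C$ is enclosed by an open dual circuit contained in a single dual cluster $C^\star$, and by the isoperimetric inequality $|C|\leq|C^\star|^2/16$; combining the exponential decay of dual cluster sizes (lemma \ref{ExpDecayClusterSize} applied to $\omega^\star$) with the negative-correlation inequality of lemma \ref{Mn_FK_lemme_ansatz_BK}, one obtains $\sum_C|C|^2=O(n^2)$ with probability $\geq 1-e^{-cn}$, uniformly in $p\geq p_0$. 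Hoeffding then yields a super-exponential bound of order $e^{-cn^2}$ on the $|S|$-event, which is far more than we need.

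The main step is the lower bound $\phi_{n,p_0,2}^1(|\mathcal{M}_n|\geq 2\alpha n^2) \geq 1-e^{-cn}$, after reduction to $p=p_0$ via the stochastic monotonicity of $\Probap$ in $p$ (the event $\{|\mathcal{M}_n|\leq K\}$ being decreasing in $\omega$). The typical value of $|\mathcal{M}_n|$ is $\theta(p_0)n^2$ with $\theta(p_0)>0$, so this is a surface-order large deviation below the mean. Choosing $\alpha < \theta(p_0)/2$, the complement event forces more than $(1-2\alpha)n^2$ vertices to be cut off from $\partial\Lambda(n)$ by a family of open dual circuits, which requires either a dual cluster of linear size in~$n$ (excluded with probability $1-e^{-cn}$ by the uniform exponential decay of lemma \ref{ExpDecayClusterSize}) or a total enclosed area exceeding the subcritical expectation by a macroscopic amount (controllable via lemmas \ref{Mn_FK_majo_souscritique}--\ref{Mn_FK_controle_carres} applied to $\omega^\star$, together with the isoperimetric estimate $|\mathrm{Int}(\gamma^\star)|\leq\ell(\gamma^\star)^2/16$ relating enclosed area to circuit length). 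Making this geometric comparison quantitative, so as to yield the required $e^{-cn}$ decay, will be the main technical obstacle of the proof.
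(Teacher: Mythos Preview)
Your overall architecture matches the paper's: Edwards--Sokal coupling, monotonicity in~$p$ to reduce the $|\mathcal{M}_n|$-estimate to the single value~$p_0$, Hoeffding for the sign fluctuations, and duality to transfer supercritical cluster-size control to subcritical estimates for~$\omega^\star$. The differences are in how the two main terms are actually handled.

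\textbf{The fluctuation term $|S|$.} You aim to prove $\sum_{C\in\Clusdec}|C|^2=O(n^2)$ with probability $1-e^{-cn}$. This is stronger than needed and your sketch for it is incomplete: the isoperimetric bound $|C|\leqslant|C^\star|^2/16$ together with lemmas~\ref{ExpDecayClusterSize} and~\ref{Mn_FK_lemme_ansatz_BK} on the dual does not immediately yield a uniform $O(n^2)$ bound on $\sum|C|^2$, because the map $C\mapsto C^\star$ is many-to-one and you would need to control $\sum|C^\star|^4$ over dual clusters. The paper bypasses this entirely by splitting the sum according to whether $|C|<n$ or $|C|\geqslant n$. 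For small clusters one has the trivial deterministic bound $\sum_{|C|<n}|C|^2\leqslant n\cdot|\Lambda(n)|=n^3$, so Hoeffding already gives decay $e^{-cn}$. For large clusters one simply bounds $\big|\sum_{|C|\geqslant n}|C|\varepsilon_C\big|\leqslant\sum_{|C|\geqslant n}|C|$ and invokes lemma~\ref{Mn_FK_controle_carres_surcritique}, whose proof via duality is exactly the circuit-length argument you outline but applied to $\sum|C|$ rather than $\sum|C|^2$.

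\textbf{The $|\mathcal{M}_n|$ term.} You correctly identify the surface-order large deviation $\phi_{n,\,p_0,\,2}^1\big(|\mathcal{M}_n|\leqslant\alpha n^2\big)\leqslant e^{-cn}$ as the crux, and propose to derive it from scratch by a dichotomy on dual-circuit geometry. This is a known and non-trivial theorem: the paper does not reprove it but simply quotes it as lemma~\ref{Mn_FK_majo_surcritique}, a direct consequence of theorem~5.5 in~\cite{Wulff}. Your dichotomy (``one linear-size dual cluster'' versus ``many small dual circuits enclosing macroscopic area'') is the right intuition, but turning the second branch into a rigorous $e^{-cn}$ bound is essentially the content of the Wulff-type analysis and cannot be obtained just from lemmas~\ref{Mn_FK_majo_souscritique}--\ref{Mn_FK_controle_carres} applied to~$\omega^\star$; those lemmas control connected dual clusters, not the total area their circuits enclose. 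So what you flag as ``the main technical obstacle'' is in fact an input the paper takes off the shelf.
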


For a fixed temperature~$T<T_c$, this follows from theorem~5.2 in~\cite{Wulff}, but we need a control which is uniform on any segment included in~$[0,\,T_c)$.

\subsection{Control of the number of vertices connected to the boundary}

\correct{The following} counterpart of lemma~\ref{Mn_FK_majo_souscritique} in the supercritical regime \correct{is a direct consequence of theorem~5.5 of~\cite{Wulff} \correct{and} lemma~\ref{ExpDecayClusterSize}.}
\begin{lemma}
\label{Mn_FK_majo_surcritique}
We have the following upper bound:
$$\forall q\geqslant 1\quad
\forall p>p_c(q)\qquad
\limsupn\,\frac{1}{n}\,\ln\Probapq\left(\,\abs{\Mn}\leqslant\frac{\theta(p)n^2}{2}\,\right)\ <\ 0\,.$$
\end{lemma}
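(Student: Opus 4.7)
The plan is to combine the surface-order concentration estimate of theorem~5.5 of~\cite{Wulff} with the dual-side exponential decay of cluster sizes provided by lemma~\ref{ExpDecayClusterSize}.

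First, I would invoke the duality of the planar random-cluster model (proposition~\ref{thm_dualite}). Since~$p>p_c(q)$ and the critical point of the planar FK model is self-dual for~$q\geqslant 1$, the dual parameter~$p^\star$ satisfies~$p^\star<p_c(q)$. Hence lemma~\ref{ExpDecayClusterSize} applies to the dual measure~$\phi_{n-1,\,p^\star,\,q}^0$, yielding exponential decay for the sizes of dual open clusters. Geometrically, a vertex~$x\in\Lambda(n)$ fails to belong to~$\Mn$ precisely when it is surrounded by an open dual contour, so this dual decay controls the event that an interior vertex is disconnected from~$\partial\Lambda(n)$.

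Second, I would apply theorem~5.5 of~\cite{Wulff}, which, in the supercritical regime, provides a surface-order large-deviation estimate for the density of the boundary-connected cluster, of the form
$$\Probapq\left(\,\left|\,\frac{|\Mn|}{n^2}-\theta(p)\,\right|\,\geqslant\,\delta\,\right)\ \leqslant\ e^{-c(p,\,q,\,\delta)\,n}$$
for every fixed~$\delta>0$. The standard proof of such a concentration bound proceeds by a block coarse-graining, whose key input is precisely the exponential decay of finite clusters provided by the first step via lemma~\ref{ExpDecayClusterSize}; it rules out a macroscopic deficit of vertices connected to the boundary, which would otherwise require an atypically large number of dual open contours of total length of order~$n$.

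To conclude, I would take~$\delta=\theta(p)/2$, which is strictly positive since~$p>p_c(q)$. The event~$\{|\Mn|\leqslant\theta(p)n^2/2\}$ is then contained in~$\{|\Mn|/n^2-\theta(p)\leqslant-\theta(p)/2\}$, so the bound above gives the announced exponential decay in~$n$. The only delicate point is to verify that theorem~5.5 of~\cite{Wulff} applies verbatim to the random-cluster measure~$\Probapq$ with wired boundary conditions in the two-dimensional setting required here; once this is confirmed, the lemma follows by a direct combination of the two cited results.
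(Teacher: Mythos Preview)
Your proposal is correct and follows essentially the same approach as the paper, which simply states that the lemma is a direct consequence of theorem~5.5 of~\cite{Wulff} together with lemma~\ref{ExpDecayClusterSize}. You have spelled out the mechanism (duality to feed lemma~\ref{ExpDecayClusterSize} into the block argument underlying theorem~5.5, then specialization to~$\delta=\theta(p)/2$), which is exactly what the paper leaves implicit.
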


\subsection{Control of the size of the clusters which do not touch the boundary}

\correct{We want a uniform control on any segment included in~$[0,\,T_c)$ of the variable}
$$\sum_{C\in\Clusdec\,:\,\abs{C}\geqslant n}{\abs{C}}
\ =\ \abs{\Big\{\,x\in\Lambda(n)\,\backslash\,\Mn\ :\ \abs{C(x)}\geqslant n\,\Big\}}\,.$$
To this end, we use duality (see paragraph~\ref{section_dualite}) to convert large clusters which do not touch the boundary into large connected contours in the dual configuration. This will allow us to use the estimate given by lemma~\ref{lemme_intermediaire_tailles_clusters} on cluster sizes in the regime~$p<p_c(q)$. 
This is why lemma~\ref{lemme_intermediaire_tailles_clusters} was stated with~$\mathcal{C}_n$ rather than~$\Clusdec$, even though a control of~\correct{$\Clusdec$} would have been enough to obtain lemma~\ref{Ising_majo_surcritique}.

\begin{lemma}
\label{Mn_FK_controle_carres_surcritique}
We have the following estimate:
\begin{gather*}
\forall q\geqslant 1\quad
\forall p_0>p_c(q)\quad
\forall A>0\\
\limsupn\,\frac{1}{n}\,\sup\limits_{p\geqslant p_0}\,\ln\Probapq\biggg(\,\sum_{C\in\mathcal{C}_n^-\,:\,\abs{C}\geqslant n}{\abs{C}}\geqslant An^2\,\biggg)\ <\ 0\,.
\end{gather*}
\end{lemma}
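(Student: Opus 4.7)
The plan is to transfer the problem to the dual random-cluster model, where it becomes subcritical, and then to apply the tools already developed for the subcritical regime. By proposition~\ref{thm_dualite}, if~$\omega\sim\Probapq$ then the dual~$\omega^*$ is distributed according to $\phi_{n-1,\,p^*,\,q}^0$. Since the involution $p\mapsto p^*$ is decreasing on~$[0,1]$ and fixes the self-dual point $p_c(q)$ for every~$q\geqslant 1$, the assumption $p\geqslant p_0>p_c(q)$ entails $p^*\leqslant p_0^*<p_c(q)$. Hence the dual is subcritical, and the estimates of lemmas~\ref{ExpDecayClusterSize} and~\ref{lemme_intermediaire_tailles_clusters} apply uniformly in~$p^*\leqslant p_0^*$, the uniformity being obtained from the stochastic domination $\phi_{n-1,\,p^*,\,q}^0\preceq\phi_{n-1,\,p_0^*,\,q}^0$.

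The next step is the geometric passage from primal clusters to dual ones. For any~$x\in\Lambda(n)\setminus\Mn(\omega)$, planar duality provides an open dual circuit in~$\omega^*$ surrounding~$x$, and this circuit necessarily encloses the whole primal cluster~$C(x)$, since an open primal path from~$x$ cannot cross a cycle of closed primal edges. By the isoperimetric inequality in~$\Z^2$, any dual circuit enclosing~$k$ primal vertices has length at least $c_1\sqrt{k}$ for a universal constant~$c_1>0$; consequently, whenever $\abs{C(x)}\geqslant n$, the vertex~$x$ lies in the interior of some open dual cluster~$D^*$ of size at least~$c_1\sqrt{n}$. Conversely, an open dual cluster~$D^*$ has diameter at most~$\abs{D^*}$, so it encloses at most $C_0\abs{D^*}^2$ primal vertices for some universal constant~$C_0$. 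Summing over dual clusters yields
$$\sum_{C\in\Clusdec\,:\,\abs{C}\geqslant n}\abs{C}\ \leqslant\ C_0\sum_{D^*\,:\,\abs{D^*}\geqslant c_1\sqrt{n}}\abs{D^*}^2\,.$$

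It remains to bound the right-hand side by~$An^2$ with probability $1-e^{-\Omega(n)}$, uniformly in~$p^*\leqslant p_0^*$. I would fix~$K>0$ and split according to whether $M^*:=\max_{D^*}\abs{D^*}$ exceeds~$Kn$. A union bound over $x\in\Lambda(n-1)$ combined with lemma~\ref{ExpDecayClusterSize} (and stochastic domination to get uniformity) gives $\phi_{n-1,\,p^*,\,q}^0(M^*\geqslant Kn)\leqslant(n-1)^2 e^{-\psi(p_0^*,\,q)Kn}$, which decays at rate~$n$. On the complementary event $\{M^*\leqslant Kn\}$, one has $\sum_{\abs{D^*}\geqslant c_1\sqrt{n}}\abs{D^*}^2\leqslant Kn\sum_{\abs{D^*}\geqslant c_1\sqrt{n}}\abs{D^*}$, and the event that this sum exceeds~$An^2/C_0$ is then included, for~$n$ large, in the event $\mathcal{D}_{n-1}(A/(2C_0K),\,1/3,\,1)$ of lemma~\ref{lemme_intermediaire_tailles_clusters} (the exponent $b=1/3$ being chosen so that $(n-1)^{1/3}\leqslant c_1\sqrt{n}$ for~$n$ large enough), whose probability decays at rate~$n$ uniformly in~$p^*\leqslant p_0^*$. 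Adding the two contributions concludes. The main obstacle is the geometric bookkeeping step: translating a bound on a sum over large primal clusters into a bound on a sum of squares of dual cluster sizes, and then reducing this sum of squares, via a maximum-versus-sum split, to a plain sum of large dual cluster sizes which lemma~\ref{lemme_intermediaire_tailles_clusters} can directly handle.
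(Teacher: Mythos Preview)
Your proof is correct, and it follows the same global strategy as the paper: pass to the dual by proposition~\ref{thm_dualite}, exploit that~$p^\star\leqslant p_0^\star<p_c(q)$, and feed the resulting subcritical configuration into lemma~\ref{lemme_intermediaire_tailles_clusters}. The difference is in the combinatorial step linking large primal clusters to large open dual clusters.

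The paper works directly with the external boundary contours: setting~$F=\bigcup_{C\in B}\partial^{ext}C$ and using~$\abs{\partial^{ext}C}\geqslant\sqrt{\abs{C}}$ together with the elementary inequality~$\sum_i\sqrt{a_i}\geqslant\sqrt{\sum_i a_i}$, one gets immediately
\[
\abs{F^\star}\ \geqslant\ \tfrac12\sum_{C\in B}\sqrt{\abs{C}}\ \geqslant\ \tfrac12\sqrt{\sum_{C\in B}\abs{C}}\ \geqslant\ \tfrac{\sqrt{A}}{2}\,n\,.
\]
Since each connected component of~$F^\star$ contains at least~$\sqrt{n}$ dual vertices and all edges of~$F^\star$ are open in~$\omega^\star$, this places~$\omega^\star$ directly in~$\mathcal{D}_{n-1}(\sqrt{A}/2,\,1/2,\,1)$, and a single call to lemma~\ref{lemme_intermediaire_tailles_clusters} finishes the proof.

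Your route instead bounds~$\sum_{C}\abs{C}$ by~$C_0\sum_{D^\star}\abs{D^\star}^2$, which forces a second reduction: a case split on~$M^\star=\max\abs{D^\star}$, handled by a union bound plus lemma~\ref{ExpDecayClusterSize}, before the remaining sum can be fed into lemma~\ref{lemme_intermediaire_tailles_clusters}. This is perfectly valid, but the subadditivity-of-square-root trick used in the paper sidesteps the sum of squares altogether and collapses the argument to a single event, yielding a shorter and slightly sharper proof.
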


\begin{proof}
Let~$q\geqslant 1$,~$p\geqslant p_0>p_c(q)$,~$A>0$,~$n\geqslant 2$. 
\correct{and~\smash{$B=\big\{C\in\Clusdec:\abs{C}\geqslant n\big\}$}.
For any cluster~$C\in B$, we define its \guillemets{external boundary}, denoted by~$\partial^{ext}C$, to be the set of the edges of~$\partial^e C$ which connect a vertex of~$C$ to a vertex in the infinite connected component of~$\Z^2\backslash C$.}
Because the clusters~$C\in B$ do not touch the boundary of the box, we have~$\partial^{ext} C\subset\En^{int}$, where~$\En^{int}$ was defined by~(\ref{definition_aretes_interieures}). \correct{The} dual~\smash{$\big(\partial^{ext}C\big)^\star$} of this external boundary \correct{then} forms a connected closed contour surrounding~$C$, \correct{implying} that
\begin{equation}
\label{dextCstar}
\abs{\big(\partial^{ext}C\big)^\star}
\ =\ \abs{\partial^{ext}C}
\ \geqslant\ 1+\diam\, C
\ \geqslant\ \sqrt{\abs{C}}
\ \geqslant\ \sqrt{n}\,.
\end{equation}
We now define the set of edges
$$F\ =\ \bigcup_{C\in B}{\partial^{ext} C}\,.$$
\correct{Recalling that one edge can connect at most two different clusters,~(\ref{dextCstar}) implies}
\begin{equation}
\label{cardF}
\abs{F}
\ \geqslant\ \demi\sum_{C\in B}{\abs{\partial^{ext} C}}
\ \geqslant\ \demi\sum_{C\in B}{\sqrt{\abs{C}}}
\ \geqslant\ \demi\sqrt{\sum_{C\in B}{\abs{C}}}\,.
\end{equation}
The connected components of~$F^\star$ are unions of~$(\partial^{ext} C)^\star$ for a certain number of clusters~$C\in B$, which implies, given~(\ref{dextCstar}), that they all contain at least~$\sqrt{n}$ vertices.
\correct{What's more, we have~$\omega=0$ on~$F$, implying that~$\omega^\star=0$ on~$F^\star$.
Given~(\ref{cardF}), }all this leads to
$$\sum_{C\in B}{\abs{C}}\ \geqslant\ An^2
\quadimplique
\abs{F^\star}
\ \geqslant\ \frac{\sqrt{A}}{2}n
\quadimplique
\omega^\star\in\mathcal{D}_{n-1}\left(\frac{\sqrt{A}}{2},\ \demi,\ 1\right)\,.$$
\correct{Therefore, the result follows from lemma~\ref{lemme_intermediaire_tailles_clusters}, using duality (proposition~\ref{thm_dualite}).} 
\end{proof}

\subsection{Moving to the Ising model}

We prove here the uniform exponential decay for the Ising model.

\begin{proof}[Proof of lemma~\ref{Ising_majo_souscritique}]
Let~$a<2$,~$T\leqslant T_0<T_c$ and~$A>0$. Let~$p=1-e^{-2/T}$ and~$p_0=1-e^{-2/T_0}$, so that~$p\geqslant p_0>p_c(2)$.
\correct{With~$\Proba$ as in section~\ref{section_Ising_majo_surcritique},} we have
\begin{align*}
\mu_{n,\,T}^+\Big(\,m\leqslant An^a\,\Big)
\ &\leqslant\ \Probap\left(\,\abs{\Mn}\,\leqslant\,\frac{\theta(p_0)n^2}{2}\,\right)\\
&\qquad\quad+\Proba\left(\,\sum_{C\in\Clusdec(\omega)}{\abs{C}\varepsilon_C}\,\leqslant\,-\frac{\theta(p_0)n^2}{2}+An^a\,\right)\\
\ &\leqslant\ \phi_{n,\,p,\,2}^1\left(\,\abs{\Mn}\,\leqslant\,\frac{\theta(p_0)n^2}{2}\,\right)\\
+\Proba\Biggg(\,\sum_{\substack{C\in\Clusdec(\omega)\\ \abs{C}< n}}{\abs{C}\varepsilon_C}&\,\leqslant\,-\frac{\theta(p_0)n^2}{8}\,\Biggg)
+\Proba\Biggg(\,\sum_{\substack{C\in\Clusdec(\omega)\\ \abs{C}\geqslant n}}{\abs{C}\varepsilon_C}\,\leqslant\,-\frac{\theta(p_0)n^2}{8}\,\Biggg)\,,
\end{align*}
\correct{for~$n$ large enough. Now note that}
$$\sum_{\substack{C\in\Clusdec(\omega)\\ \abs{C}< n}}{\abs{C}^2}
\ \leqslant\ n\times\sum_{\substack{C\in\Clusdec(\omega)\\ \abs{C}< n}}{\abs{C}}
\ \leqslant\ n\abs{\Lambda(n)}
\ =\ n^3\,,$$
which implies by Hoeffding's inequality that
$$\Proba\biggg(\,\sum_{C\in\Clusdec(\omega)\,:\,\abs{C}< n}{\abs{C}\varepsilon_C}\,\leqslant\,-\frac{\theta(p_0)n^2}{8}\,\biggg)
\ \leqslant\ \exp\left[\,-\frac{2}{4n^3}\left(\frac{\theta(p_0)n^2}{8}\right)^2\,\right]\,.$$
\correct{The result then follows by} taking the supremum over~$T\leqslant T_0$ \correct{and using} the exponential estimates of lemmas~\ref{Mn_FK_majo_surcritique} and~\ref{Mn_FK_controle_carres_surcritique}.
\end{proof}

\section{Finite-size scaling results for near-critical FK-percolation}

\label{bigsec_FSS}

This section is devoted to the proof of proposition~\ref{prop_messikh}, \correct{which states that, for every~$s\in(0,\,8/41)$ and}~$K,\,\delta>0$ and for any sequence~$p_n\in[0,1]$ \correct{such that~\smash{$p_n-p_c(2)\eqninfty K n^{-s}$},} we have
\begin{subequations}
\begin{eqnarray}
&\limn\Probapn\Big(\,\abs{\Mn}\leqslant(1+\delta)\theta(p_n)\abs{\Lambda(n)}\,\Big)\ =\ 1\,,\label{condition1}\\
&\limn\Probapn\left(\,\max\limits_{C\in\mathcal{C}_n^-}\abs{C}\leqslant n^{s+1/2}\,\right)\ =\ 1\,,\label{condition2}\\
&\limn\Probapn\Big(\,\abs{\Mn\cap\Lambda(n_1)}\geqslant(1-\delta)\theta(p_n)\abs{\Lambda(n_1)}\,\Big)\ =\ 1\,,\label{condition3}
\end{eqnarray}
\end{subequations}
where~$n_1=\Ent{5n/6}$.
From Onsager~\cite{Onsager} and Yang~\cite{Yang}, we know that
\begin{equation}
\label{equivalent_theta}
\theta(p)\ \stackrel{p\downarrow p_c(2)}{\sim}\ \left[\,8\left(\frac{p}{p_c(2)}-1\right)\,\right]^{1/8}\,.
\end{equation}

\begin{proof}[Proof of proposition~\ref{prop_messikh}]
Let~$s$ be such that~$0<s<8/41$, let~$K,\,\delta>0$ and~$p_n\in[0,1]$ such that~\smash{$p_n-p_c(2)\sim K n^{-s}$}. \correct{Let~$a'$ be} such that
\begin{equation}
\label{encadrement_a_prime2}
5\ <\ a'\ <\ \frac{1}{s}-\frac{1}{8}\,.
\end{equation}
\souspreuve{Proof of~(\ref{condition1}):}
\correct{It follows from} theorem~2 of~\cite{CerfMessikh2011} with~$a'$,~$p=p_n$ and~$\delta$ that
$$\limsupn\,\frac{1}{(p_n-p_c(2))^{2a'+1/4}n^2}\,\ln\Probapn\Big(\,\abs{\Mn}>(1+\delta)\theta(p_n)\abs{\Lambda(n)}\,\Big)\ <\ 0\,.$$
Yet, we know by~(\ref{encadrement_a_prime2}) that~$2a'+1/4<2/s$
which implies that
$$\big(p_n-p_c(2)\big)^{2a'+1/4}n^2
\ \eqninfty\ K^{2a'+1/4}\times n^{2-(2a'+1/4)s}
\ \cvninfty\ +\infty\,.$$
Therefore, we have \correct{proved}~(\ref{condition1}).

\suitepreuve{Proof of~(\ref{condition2}):}
A cluster~$C\subset\Lambda(n)$ is said to \guillemets{cross} a sub-box~$B\subset\Lambda(n)$ if there exists in~$C\cap B$ an open path from the bottom side to the top side of~$B$ and an open path from the left side to the right side of~$B$ (this condition is in fact stronger than the one of~\cite{CerfMessikh2011}, which only requests~$C$ to intersect all the faces of~$B$, but the same proof works with our definition).
\correct{Using theorem~1 of~\cite{CerfMessikh2011} with~$a'$ and~\smash{$M=\Ent{n^{s/2+1/4}}$}, we obtain}
\begin{equation}
\label{label4912}
\limn\phi_{m_n,\,p_n,\,2}^1\big(\mathcal{E}_n\big)
\ =\ 1\,,
\end{equation}
where~$m_n=\Ent{6n/5}$, and where~$\mathcal{E}_n$ is the event \guillemets{in the box~$\Lambda(n)$, there exists a cluster~$C_0$ crossing every sub-box of~$\Lambda(n)$ with diameter~$M$}.
\correct{By} the nesting property of random-cluster measures \correct{and the FKG inequality}, we have
\begin{equation}
\label{eq24796}
\Probapn\big(\mathcal{E}_n\big)
\ =\ \phi_{m_n,\,p_n,\,2}^1\left(\,\mathcal{E}_n\ \Big|\ \omega\restrict{\mathbb{E}_{m_n}\!\backslash\En}=1\,\right)
\ \geqslant\ \phi_{m_n,\,p_n,\,2}^1\big(\mathcal{E}_n\big)\,.
\end{equation}
If this event~$\mathcal{E}_n$ occurs, then every open path of diameter at least~$M$ must be included into~$C_0$. Yet the cluster~$C_0$ touches the boundary~$\partial\Lambda(n)$, so all open paths of diameter larger than~$M$ are connected to~\correct{$\partial\Lambda(n)$}, \correct{whence}
$$\mathcal{E}_n
\quad\Rightarrow\quad
\max\limits_{C\in\Clusdec}\abs{C}
\correct{\ \leqslant\ \max\limits_{C\in\Clusdec}\big(1+\diam\,C\big)^2}
\ \leqslant\ M^2\ \leqslant\ n^{s+1/2}\,.$$
\correct{Combining this with~(\ref{label4912}) and~(\ref{eq24796}), we obtain}~(\ref{condition2}).

\suitepreuve{Proof of~(\ref{condition3}):}
\correct{We now} choose~$\alpha$ such that
$$a's\ <\ \alpha\ <\ \frac{8a'}{8a'+1}\,.$$
By theorem~3 of~\cite{CerfMessikh2011} applied with~$a'$,~$\alpha$~and~$\delta$, we have
$$\Probapn\left(\,\max_{C\in\mathcal{C}_{n_1}}\abs{C}\geqslant(1-\delta)\theta(p_n)\abs{\Lambda(n_1)}\,\right)\ \cvninfty\ 1\,.$$
Combining this with~(\ref{condition2}), we get
\begin{equation}
\label{Mn_FK_eq_Cmax2}
\Probapn\left(\,\max_{C\in\mathcal{C}_{n_1}}\abs{C}\geqslant(1-\delta)\theta(p_n)\abs{\Lambda(n_1)}
\ \text{and}\ \max\limits_{C\in\Clusdec}\abs{C}\leqslant n^{s+1/2}\,\right)\,\cvninfty\,1\,.
\end{equation}
Yet, according to the asymptotics for~$\theta(p)$ given by~(\ref{equivalent_theta}), we know that
$$(1-\delta)\theta(p_n)\abs{\Lambda(n_1)}
\ \eqninfty\ (1-\delta)\left(\frac{5}{6}\right)^2
\left(\frac{8}{p_c(2)}\right)^{1/8}n^{2-s/8}\,.$$
As~$s<4/3$, we have~$2-s/8>s+1/2$ and thus, for~$n$ large enough, we have
~$(1-\delta)\theta(p_n)\abs{\Lambda(n_1)}
>n^{s+1/2}$.
Hence, if the event in~(\ref{Mn_FK_eq_Cmax2}) occurs, then there is in~$\Lambda(n_1)$ a cluster containing strictly more than~$n^{s+1/2}$ vertices, which must therefore be connected to~$\partial\Lambda(n)$. Thus, the result follows from~(\ref{Mn_FK_eq_Cmax2}).
\end{proof}

\section{Lower bound on the partition function}

\label{bigsec_minoZn}

The goal of this section is to show the following lower bound on the partition function of our model, \correct{following the strategy presented in paragraph~\ref{section_heuristique_Zn}.}

\begin{lemma}
\label{Ising_lemme_minoration_Z_n}
For every~$a\in(31/16,\,2)$ such that~$\mathcal{FSS}(16-8a)$ holds, we have
$$\liminfn\,\frac{\ln Z_n}{(\ln n)n^{\rho}}\ >\ -\infty
\qquadou
\rho\ =\ \max\left(\frac{a}{2},\ \frac{33}{2}-8a\right)\,.$$
\end{lemma}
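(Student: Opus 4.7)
The plan is to restrict the sum~(\ref{reecriture_Zn}) to a single term $b = b_n$ and to bound $\mu^+_{n,\,b_n^2/n^{2a}}(m = b_n)$ constructively from below, following the heuristic of paragraph~\ref{section_heuristique_Zn}. I take $b_n$ to be the integer closest to $n^a\sqrt{T_c}$ with the same parity as $\abs{\Lambda(n)}$, so that the event $\{m = b_n\}$ is not automatically empty. Then $b_n^2/n^{2a} \cvninfty T_c$, and the Onsager-type asymptotics~(\ref{equivalent_theta}) show that the Edwards-Sokal parameter $p_n := \varphi_n(b_n)$ satisfies $p_n - p_c(2) \eqninfty K n^{-(16-8a)}$ for an explicit $K>0$. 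Since $a > 81/41$, one has $16-8a \in (0,8/41)$, so the hypothesis $\mathcal{FSS}(16-8a)$ applies and yields an event of probability tending to one on which $b_n \leqslant \abs{\Mn} \leqslant \lambda n^a$, $\abs{\Mn \cap \Lambda(n_1)} \geqslant \nu n^a$, and $\max_{C \in \Clusdec}\abs{C} \leqslant N := n^{33/2-8a}$, with the prescribed choices $\lambda=4$, $\mu=3$, $\nu=2$.

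The first constructive step is the halfway-cut surgery. Set $k = \Ceil{(\abs{\Mn}-b_n)/2}$. Combining the upper bound on $\abs{\Mn}$ with the lower bound on $\abs{\Mn\cap\Lambda(n_1)}$ gives $\abs{\Mn\setminus\Lambda(n_1)} \leqslant (\abs{\Mn}+b_n)/2$, so it is enough to separate $\Lambda(n_1)$ from $\partial\Lambda(n)$. A pigeonhole argument on the $O(n)$ concentric rings of edges lying between $\partial\Lambda(n_1)$ and $\partial\Lambda(n)$ yields such a separation using $O(n^{a-1})$ open edges, and the geometric adjustment lemma of~\cite{ModelsofSOCinPerco} then refines the cut at the extra cost of $O(n^{a/2})$ edges so that \emph{exactly} $k$ vertices become disconnected. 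By finite energy, imposing the closure of these edges in the FK configuration costs a probabilistic factor of at most $\exp(-C_1 n^{a/2})$ (the $n^{a-1}$ contribution is dominated by $n^{a/2}$ for $a<2$).

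Next, using proposition~\ref{lemme_Edwards_Sokal}, I build the associated Ising configuration by assigning $+$ to $\Mn$ of the cut configuration and independent uniform spins to the other clusters. I then force every spin on the $O(n^{a/2})$ newly-disconnected clusters to be $-$, which costs $\exp(-C_2 n^{a/2})$ and makes the combined contribution of $\Mn$ and these clusters equal to $(b_n+k)-k=b_n$. It remains to force the magnetization $S$ of the clusters which were already not connected to $\partial\Lambda(n)$ to vanish. The bound $\max\abs{C}\leqslant N$ combined with Hoeffding's inequality places $\abs{S}$ in $[-N,N]$ with high probability, while a further consequence of $\mathcal{FSS}(16-8a)$ (via a counting argument on the typical number of unit clusters, using $\theta(p_n) \to 0$) guarantees at least $N$ singletons in the interior. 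Fixing the spins of $N$ such unit clusters so that their total contribution equals $-S$ has probability at least $2^{-N} = \exp(-C_3 n^{33/2-8a})$. Multiplying these three constructive factors and absorbing polynomial combinatorial overheads (indexing which edges are cut, which unit clusters are frozen, etc.) into a $\ln n$ factor yields $\ln Z_n \geqslant -C(\ln n) n^\rho$ with $\rho = \max(a/2,\,33/2-8a)$.

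I expect the principal difficulty to be organizing the surgery so that three partially conflicting requirements hold simultaneously: (i) the closed edges produce exactly $k$ disconnected vertices, (ii) the disconnected vertices split into at most $O(n^{a/2})$ distinct clusters, so that forcing their spins to $-$ only costs $\exp(-O(n^{a/2}))$, and (iii) the interior still contains at least $N$ unit clusters after the surgery, available for the final compensation of $S$. The numerical choices $\lambda=4$, $\mu=3$, $\nu=2$ are tuned to give just enough slack in the resulting geometric inequalities, and the threshold $a > 31/16$ is precisely what ensures that both $n^{a/2}$ and $n^{33/2-8a}$ remain strictly smaller than $\abs{\Lambda(n)}$, so that the constructive lower bound above is not overwhelmed by the $\mathcal{FSS}$ error terms.
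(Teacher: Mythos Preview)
Your overall architecture matches the paper's proof closely: restrict~(\ref{reecriture_Zn}) to a single $b_n$ with the right parity, use $\mathcal{FSS}(16-8a)$ to control $\abs{\Mn}$, $\abs{\Mn\cap\Lambda(n_1)}$ and $\max_{C\in\Clusdec}\abs{C}$, perform the halfway cut with $O(n^{a/2})$ edges, force the disconnected clusters to spin~$-$, and finally compensate the remaining magnetization with $N=n^{33/2-8a}$ unit clusters. The edge-closing cost and the $\ln n$ bookkeeping are also essentially as in the paper.

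There is, however, a genuine gap in the step where you control the magnetization $S$ of the clusters that were already disconnected before surgery. You write that ``the bound $\max\abs{C}\leqslant N$ combined with Hoeffding's inequality places $\abs{S}$ in $[-N,N]$ with high probability''. This is false. Hoeffding gives
\[
\Proba\big(\abs{S}>N\big)\ \leqslant\ 2\exp\Bigl(-\tfrac{N^2}{2\sum_{C}\abs{C}^2}\Bigr),
\]
but all you know is $\max\abs{C}\leqslant N$ together with $\sum\abs{C}\leqslant n^2$, which only yields $\sum\abs{C}^2\leqslant Nn^2$. Hence the exponent is at most $N/(2n^2)=n^{33/2-8a-2}/2\to 0$, since $33/2-8a<1$ for $a>31/16$. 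In fact the standard deviation of $S$ is of order $n\sqrt{N}\gg N$, so the event $\{\abs{S}\leqslant N\}$ is very \emph{unlikely}, not high-probability; a concentration inequality cannot help here.

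What is needed is a \emph{lower} bound on $\Proba(\abs{S}\leqslant N)$, and the paper obtains one not via Hoeffding but by an induction on the cluster size $j=1,\ldots,N$: writing $S_j$ for the contribution of clusters of size $\leqslant j$, one forces the size-$j$ clusters to (almost) cancel among themselves at each step, using the central binomial estimate $\binom{2k}{k}4^{-k}\geqslant K_2/\sqrt{2k}$. This yields
\[
\Proba\big(\abs{S_N}\leqslant N\big)\ \geqslant\ \Bigl(\tfrac{K_2}{2n}\Bigr)^{N},
\]
a bound of order $\exp\bigl(-O(N\ln n)\bigr)=\exp\bigl(-O(n^{33/2-8a}\ln n)\bigr)$, which is exactly the second term in $\rho$. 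Your proposal lacks this argument (or any substitute, such as a rigorous anticoncentration bound), so as written it does not establish the lemma.
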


\subsection{The price for closing edges}

We start by stating a useful lemma to estimate the probability for closing a random set of edges, \correct{which is a direct consequence of lemma~6.3 in~\cite{CerfPisztora2000}.}

\begin{lemma}
\label{lemme_fermeture_H}
Let~$n\geqslant 1$,~$p\in]0,1[$,~$q\geqslant 1$ and~\smash{$\mathcal{A}\subset\acc{0,1}^{\En}$}. Let~$H$ be an arbitrary application which associates to any configuration~$\omega\in\mathcal{A}$ a certain set of edges~$H(\omega)\subset\En$. We consider the application~\correct{$\psi:\omega\in\mathcal{A}\mapsto \omegaH$. Then
$$\Probapq\big(\psi(\mathcal{A})\big)\ \geqslant\ \Probapq\big(\mathcal{A}\big)\left[\frac{p\wedge(1-p)}{3p\abs{\Lambda(n)}}\right]^N
\quadou
N\ =\ \max_{\omega\in\mathcal{A}}\abs{H(\omega)}\,.
$$}
\end{lemma}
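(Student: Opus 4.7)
The plan is a standard two-step argument: first establish a pointwise probability comparison between $\omega$ and $\psi(\omega)$, then correct for the non-injectivity of $\psi$ by bounding the maximum cardinality of its fibers.

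For the pointwise step, I would fix $\omega\in\mathcal{A}$ and compare the FK densities directly. Since $\omega$ and $\psi(\omega)=\omegaH$ coincide outside $H(\omega)$, while $\psi(\omega)$ vanishes on $H(\omega)$, the explicit form of the density gives
$$\frac{\Probapq(\psi(\omega))}{\Probapq(\omega)}\ =\ \left(\frac{1-p}{p}\right)^{\abs{\acc{e\in H(\omega)\,:\,\omega(e)=1}}}q^{k^1(\psi(\omega))-k^1(\omega)}.$$
The cluster-count factor is at least $1$, since closing edges can only split clusters and $q\geqslant 1$. The $p$-factor is at least $1$ when $p\leqslant 1/2$, and, using $\abs{H(\omega)}\leqslant N$, at least $((1-p)/p)^N$ when $p>1/2$. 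Either way the ratio is bounded below by $((p\wedge(1-p))/p)^N$, and summing over $\omega\in\mathcal{A}$ yields
$$\sum_{\omega\in\mathcal{A}}\Probapq(\psi(\omega))\ \geqslant\ \Probapq(\mathcal{A})\left(\frac{p\wedge(1-p)}{p}\right)^N.$$

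For the fiber-counting step, the left-hand side above is a multiplicity-weighted sum over $\psi(\mathcal{A})$, hence at most $M\cdot\Probapq(\psi(\mathcal{A}))$ where $M=\max_{\omega'}\abs{\psi^{-1}(\omega')\cap\mathcal{A}}$. Any preimage $\omega$ of a given $\omega'$ is determined by the pair $(H(\omega),\omega\restrict{H(\omega)})$, with $H(\omega)\subset\En$ of cardinality at most $N$ contained in $\acc{e:\omega'(e)=0}$ and with arbitrary values on $H(\omega)$. A straightforward combinatorial estimate (think of each edge as being in one of three states, ``outside $H$'', ``in $H$ with value $0$'', or ``in $H$ with value $1$'', with at most $N$ edges in a non-default state, and use $\abs{\En}\leqslant 2\abs{\Lambda(n)}$) bounds $M$ by $(3\abs{\Lambda(n)})^N$. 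Combining the two steps gives the claimed inequality.

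The main obstacle is really just the careful combinatorial bookkeeping required to obtain the precise constant $3$ in the denominator; this is exactly what Lemma~6.3 of~\cite{CerfPisztora2000} handles, and is the reason the paper invokes it directly rather than redoing the count. The pointwise comparison in the first step is elementary and uses only the FKG-type monotonicity of cluster counts under edge closure.
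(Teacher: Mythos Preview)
Your two-step argument (pointwise density comparison followed by a fiber count for $\psi$) is correct and is exactly the content of Lemma~6.3 in~\cite{CerfPisztora2000}, which the paper simply invokes without reproducing any details. One small sharpening of your fiber count: since $\psi(\omega)\leqslant\omega$, a preimage $\omega$ of $\omega'$ is already determined by the set $\{e:\omega(e)=1,\ \omega'(e)=0\}\subset H(\omega)$ alone, of cardinality at most $N$; no separate ``value'' bit is needed. This gives directly
\[
M\ \leqslant\ \sum_{k=0}^{N}\binom{\abs{\En}}{k}\ \leqslant\ \sum_{k=0}^{N}\big(2\abs{\Lambda(n)}\big)^{k}\ \leqslant\ \big(3\abs{\Lambda(n)}\big)^{N},
\]
whereas your three-state heuristic, read literally (counting pairs $(H,\omega\restrict{H})$), carries an extra $2^{k}$ and lands nearer $(4\abs{\Lambda(n)})^{N}$. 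As you rightly note, this is precisely the bookkeeping the cited reference absorbs.
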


\subsection{Construction of the fixed point and preliminary estimates}

\label{section_quasi_surcritique}

\correct{Following the strategy presented in paragraph~\ref{section_heuristique_Zn}, we define}
$$b'_n\,=\,n^a\sqrt{2}\Bigg[-\ln\left(1-p_c(2)-\frac{3^8 p_c(2)}{8n^{16-8a}}\right)\Bigg]^{-1/2}
\ \text{and}\ 
b_n\,=\,\Ent{b'_n}-\mathbb{1}_{\Ent{b'_n}\centernot{\equiv} n^2\,[2]}\,,$$
so that we always have
\begin{equation}
\label{bn_mod2}
b_n\ \equiv\ n^2\ \modulo{2}\,.
\end{equation}
Our final aim is to get a lower bound on the probability that, at temperature~$T=b_n^2/n^{2a}$, the magnetization is exactly equal to~$b_n$. For this, we need~$b_n$ to have the same parity as~$\abs{\Lambda(n)}=n^2$, hence the above definition.
We also set~$p_n=\varphi_n(b_n)$, where the function~$\varphi_n$ is \correct{given} by~(\ref{definition_varphi}).
Recall that, if~$\omega$ is a percolation configuration in the box~$\Lambda(n)$, then~$\Clusdec(\omega)$ denotes the set of the open clusters in~$\omega$ which do not touch the boundary~$\partial\Lambda(n)$, and~$\Clusdec(1,\,\omega)$ denotes the subset of these clusters which contain only one vertex. The present section is devoted to the proof of the following statement, which is a direct outcome of the assumptions~$\mathcal{FSS}(16-8a)$.

\begin{lemma}
\label{Mn_FK_proba_Fn}
Let~$a\in(31/16,\,2)$. Setting~$n_1=\Ent{5n/6}$, we define the event
\begin{equation}
\label{Mn_FK_definition_Gn}
\mathcal{G}_n\ =\ \acc{\,\begin{array}{c}
\abs{\Mn}\leqslant 4n^a\,,\quad
\abs{\Mn\cap\Lambda\left(n_1\right)}\geqslant 2n^a\,,\\
\max\limits_{C\in\Clusdec}\abs{C}\leqslant n^{33/2-8a}\leqslant \abs{\Clusdec(1)}-1
\end{array}\,}\,.
\end{equation}
If the hypotheses~$\mathcal{FSS}(16-8a)$ hold, then we have~\smash{$\limn\Probapn\big(\mathcal{G}_n\big)=1$}.
\end{lemma}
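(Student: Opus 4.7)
The plan splits into a direct application of the finite-size-scaling hypothesis, which handles three of the four conditions defining $\mathcal{G}_n$, plus a separate stochastic-domination argument for the fourth one, which concerns the count of isolated interior vertices. First, I would translate the definition of $b_n$ into a precise asymptotic for $p_n = \varphi_n(b_n)$. A direct calculation from the definition of $b'_n$ gives
$$\varphi_n(b'_n)\ -\ p_c(2)\ =\ \frac{3^8\,p_c(2)}{8\,n^{16-8a}}\,,$$
and since $b'_n \sim n^a \sqrt{T_c}$ while the floor-and-parity adjustment moves $b_n$ by $O(1)$, the induced change in $\varphi_n(b_n)$ is $O(n^{-a})$, which is negligible relative to $n^{-(16-8a)}$ as soon as $a > 16/9$, in particular throughout the range $a > 31/16$. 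Consequently $p_n - p_c(2) \sim K\,n^{-(16-8a)}$ with $K = 3^8 p_c(2)/8 > 0$, and the Onsager-Yang asymptotic~(\ref{equivalent_theta}) yields $\theta(p_n) \sim 3\,n^{a-2}$, whence $\theta(p_n)|\Lambda(n)| \sim 3\,n^a$ and $\theta(p_n)|\Lambda(n_1)| \sim (25/12)\,n^a$. Applying the hypothesis $\mathcal{FSS}(16-8a)$ with $\delta < 1/25$ then yields, with probability tending to $1$, the three bounds $|\Mn| \leq 4n^a$, $|\Mn \cap \Lambda(n_1)| \geq 2n^a$ and $\max_{C \in \Clusdec}|C| \leq n^{33/2-8a}$, covering three of the four conditions in $\mathcal{G}_n$.

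It remains to show that $|\Clusdec(1)| \geq n^{33/2-8a}+1$ with probability tending to $1$. To this end, let $X(\omega)$ denote the number of strict-interior vertices all four of whose incident edges are closed in $\omega$; this equals $|\Clusdec(1)|$ and is a \emph{decreasing} function of $\omega$, since opening any edge can only destroy isolated vertices. For $q \geq 1$ the wired random-cluster measure $\Probapn$ is stochastically dominated by the Bernoulli product measure $\mathrm{Ber}_{p_n}$ on $\En$, so for every $t$ we have $\Probapn(X \geq t) \geq \mathrm{Ber}_{p_n}(X \geq t)$. Under $\mathrm{Ber}_{p_n}$, the expectation is $\mathbb{E}[X] = (n-2)^2(1-p_n)^4 \sim (1-p_c(2))^4 n^2$, and the variance is $O(n^2)$ because the isolatedness indicators at two vertices are independent unless those vertices are neighbours. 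Chebyshev's inequality then gives $\mathrm{Ber}_{p_n}(X \geq c\,n^2) \to 1$ for some $c > 0$. Since $a > 31/16$ forces $33/2 - 8a < 1$, the threshold $n^{33/2-8a}+1$ is much smaller than $n^2$, so the desired bound follows.

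The main obstacle is this fourth condition: while finite energy trivially gives a positive lower bound on the probability that a fixed interior vertex is isolated, concentration of the global count under $\Probapn$ is not immediate because FKG only provides positive correlation of these decreasing events, which is the wrong sign for a naive variance bound. Reducing to the independent Bernoulli product measure via the $q \geq 1$ stochastic domination cleanly sidesteps this issue.
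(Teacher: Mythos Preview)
Your argument is correct and follows essentially the same route as the paper: the asymptotics for~$p_n$ and~$\theta(p_n)$ are set up identically, the three~$\mathcal{FSS}$ conditions are invoked with suitable~$\delta$ to dispatch the first three constraints, and the isolated-vertex count is handled via the stochastic domination~$\Probapn\preceq\phi_{n,p_n,1}^1$. The only cosmetic difference is in the concentration step for~$\abs{\Clusdec(1)}$ under Bernoulli percolation: the paper restricts to the sublattice~$\{x:\norme{x}_1\equiv 0\,[2]\}$ so that the isolation indicators become genuinely independent and then applies Hoeffding, whereas you keep the full sum, bound the variance by~$O(n^2)$ via pairwise near-independence, and apply Chebyshev. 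Both arguments are elementary and yield the required~$\Proba(X\geqslant cn^2)\to 1$, which amply dominates the threshold~$n^{33/2-8a}$.
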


Before proving this lemma, we check that our definition of~$b_n$ leads to the right convergence speed towards the critical point.

\begin{lemma}
For any~$a>16/9$, we have the following estimates:
\begin{align}
b_n\ &\eqninfty\ n^a\sqrt{T_c}\,,\label{Mn_FK_eq_bn}\\
p_n-p_c(2)\ &\eqninfty\ \frac{3^8 p_c(2)}{8n^{16-8a}}\,,\label{Mn_FK_equivalent_pns}\\
\phantom{\Big]}\theta(p_n)\abs{\Lambda(n)}\ &\eqninfty\ 3n^a\,.\label{eq_theta_pns}
\end{align}
\end{lemma}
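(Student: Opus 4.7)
The three estimates are direct calculations from the definitions of $b'_n$ and $b_n = \lfloor b'_n\rfloor + O(1)$; no new idea is needed beyond the identity
\[
-\ln\bigl(1-p_c(2)\bigr)\ =\ \frac{2}{T_c}\,,
\]
which comes from the Edwards--Sokal relation $p=1-e^{-2/T}$ taken at criticality. The only nontrivial point is to keep the error introduced by passing from the real number $b'_n$ to the integer $b_n$ smaller than the leading correction in (\ref{Mn_FK_equivalent_pns}); this is exactly where the hypothesis $a>16/9$ is used.

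For (\ref{Mn_FK_eq_bn}), I would observe that since $a<2$ one has $16-8a>0$, so the perturbation $3^8p_c(2)/(8n^{16-8a})$ vanishes as $n\to\infty$. Hence $-\ln(1-p_c(2)-\frac{3^8 p_c(2)}{8n^{16-8a}})\to 2/T_c$, and directly from the definition,
\[
\frac{b'_n}{n^a}\ =\ \sqrt{2}\bigl[\,-\ln(\cdot)\,\bigr]^{-1/2}\ \longrightarrow\ \sqrt{T_c}\,.
\]
Because $b_n=b'_n+O(1)$ while $b'_n\to+\infty$, the same equivalence carries over from $b'_n$ to $b_n$.

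For (\ref{Mn_FK_equivalent_pns}), the construction of $b'_n$ is tailored so that $\exp(-2n^{2a}/b'^2_n)=1-p_c(2)-\frac{3^8 p_c(2)}{8n^{16-8a}}$ exactly. Writing $b_n^2 = b'^2_n\bigl(1+O(1/b'_n)\bigr) = b'^2_n\bigl(1+O(n^{-a})\bigr)$ and using that $2n^{2a}/b'^2_n$ is bounded (it converges to $2/T_c$), the exponent $2n^{2a}/b_n^2$ differs from $2n^{2a}/b'^2_n$ by an additive $O(n^{-a})$. Exponentiating then yields
\[
p_n-p_c(2)\ =\ \frac{3^8 p_c(2)}{8\,n^{16-8a}}\ +\ O\!\left(n^{-a}\right)\,.
\]
Since the inequality $a>16/9$ is equivalent to $a>16-8a$, one has $n^{-a}=o(n^{-(16-8a)})$, so the leading term is indeed $3^8 p_c(2)/(8n^{16-8a})$.

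For (\ref{eq_theta_pns}), I would plug the equivalent just obtained into the Onsager--Yang asymptotic (\ref{equivalent_theta}): a short computation gives
\[
\theta(p_n)\ \sim\ \left[\frac{8}{p_c(2)}\cdot\frac{3^8 p_c(2)}{8\,n^{16-8a}}\right]^{1/8}\ =\ \frac{3}{n^{2-a}}\,,
\]
and multiplying by $\abs{\Lambda(n)}=n^2$ produces $3n^a$. The apparently arbitrary constants $3^8$ and $8$ in the definition of $b'_n$ are chosen precisely so that the numerical prefactor here is $3$, matching the value of $\mu$ that was singled out in paragraph~\ref{section_heuristique_Zn}. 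The only point where real care is needed is the bookkeeping in the proof of (\ref{Mn_FK_equivalent_pns}): the floor-and-parity adjustment $b_n-b'_n=O(1)$ must not swamp the leading term, which is exactly the content of the threshold $a>16/9$.
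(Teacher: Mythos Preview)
Your proposal is correct and follows essentially the same route as the paper's proof: part~(\ref{Mn_FK_eq_bn}) is read off from the definition, part~(\ref{eq_theta_pns}) is obtained by plugging~(\ref{Mn_FK_equivalent_pns}) into the Onsager--Yang asymptotic~(\ref{equivalent_theta}), and part~(\ref{Mn_FK_equivalent_pns}) comes down to showing that the passage from~$b'_n$ to~$b_n$ introduces an~$O(n^{-a})$ error, which is negligible precisely when~$a>16/9$. The only cosmetic difference is that the paper bounds~$\varphi_n(b_n)-\varphi_n(b'_n)$ by a sandwich argument (using the monotonicity of~$\varphi_n$ for the lower bound and the explicit inequality~$e^{-x}-e^{-y}\leqslant y-x$ for the upper bound), whereas you reach the same~$O(n^{-a})$ via a Taylor-type expansion of the exponent; both are equivalent elementary calculations.
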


\begin{proof}
Equation~(\ref{Mn_FK_eq_bn}) is a consequence of the definition of~$b_n$.
To show~(\ref{Mn_FK_equivalent_pns}), note on the one hand that,~$\varphi_n$ being decreasing on~\smash{$\acc{0,\,\ldots,\,n^2}$}, we have
$$p_n\ =\ \varphi_n(b_n)\ \geqslant\ \varphi_n\left(b'_n\right)
\ =\ p_c(2)+\frac{3^8 p_c(2)}{8n^{16-8a}}\,.$$
On the other hand, we can write
\begin{multline*}
p_n-\left(p_c(2)+\frac{3^8 p_c(2)}{8n^{16-8a}}\right)
\ \leqslant\ \varphi_n\left(b'_n-2\right)-\varphi_n\left(b'_n\right)\\
=\ e^{-2n^{2a}/{b'_n}^2}-e^{-2n^{2a}/(b'_n-2)^2}
\ \leqslant\ \frac{2n^{2a}}{\left(b'_n-2\right)^2}-\frac{2n^{2a}}{{b'_n}^2}
\ =\ \frac{2n^{2a}\left(4b'_n-4\right)}{\left(b'_n-2\right)^2{b'_n}^2}\,.
\end{multline*}
Combining this with the asymptotics for~$b_n$ given by equation~(\ref{Mn_FK_eq_bn}), we get
$$p_n-p_c(2)-\frac{3^8 p_c(2)}{8n^{16-8a}}
\ =\ O\left(\frac{1}{n^a}\right)
\ =\ o\left(\frac{1}{n^{16-8a}}\right)\,,$$
\correct{where we have used the fact that~$a>16/9$.}
Therefore, equation~(\ref{Mn_FK_equivalent_pns}) is satisfied, and~(\ref{eq_theta_pns}) can be deduced from it, using the expansion of~$\theta(p)$ given by~(\ref{equivalent_theta}).
\end{proof}

We now prove that~\correct{$\mathcal{FSS}(16-8a)$ implies} the result about the event~$\mathcal{G}_n$.

\begin{proof}[Proof of lemma~\ref{Mn_FK_proba_Fn}]
Let~$a\in(31/16,\,2)$ such that~$\mathcal{FSS}(16-8a)$ holds.
\correct{Using the expansion given by~(\ref{eq_theta_pns}) and applying~(\ref{condition1}) with~$\delta=1/6$ leads to}
\begin{equation}
\label{Mn_FK_dev_sup_step14}
\Probapn\Big(\,\abs{\Mn}\leqslant 4n^a\,\Big)
\ \geqslant\ \Probapn\left(\,\abs{\Mn}\leqslant\frac{7}{6}\theta(p_n)\abs{\Lambda(n)}\,\right)\ \cvninfty\ 1\,.
\end{equation}
\correct{Using now~(\ref{condition3}) with~$\delta=1/50$ and again the expansion~(\ref{eq_theta_pns}), we get}
\begin{equation}
\label{Mn_FK_eq_Cmax_2}
\limn\Probapn\Big(\,\abs{\Mn\cap\Lambda(n_1)}\geqslant 2n^a\,\Big)
\ =\ 1\,.
\end{equation}
Eventually, we show a lower bound on the number of clusters of size~$1$ which do not touch the boundary of the box. These clusters will be useful when passing to the Ising model (in section~\ref{section_minoration_Zn}), in order to tune precisely the value of the magnetization by choosing the spins of these unit clusters. Following theorem~3.21 in~\cite{GrimmettFK}, the random-cluster measure is stochastically dominated by the corresponding Bernoulli percolation measure, that is to say~\smash{$\Probapn\preceq\phi_{n,\,p_n,\,1}^1$}. \correct{The variable~$\abs{\Clusdec(1)}$ being decreasing, this entails that}
\begin{equation}
\label{comp_stoch}
\Probapn\Big(\abs{\Clusdec(1)}\geqslant 1+n^{33/2-8a}\Big)
\,\geqslant\,\phi_{n,\,p_n,\,1}^1\Big(\abs{\Clusdec(1)}\geqslant 1+n^{33/2-8a}\Big)\,.
\end{equation}
Considering one half of the vertices inside the box, we define
\begin{equation}
\label{somme_clusters_1}
U_n\ =\ U_n(\omega)\ =\ \sum_{\substack{x\in\Lambda(n)\backslash\partial\Lambda(n)\\ \norme{x}_1\,\equiv\, 0\ \modulo{2}}}{\mathbb{1}_{C(x)=\acc{x}}}\,,
\end{equation}
which is such that~$\abs{\Clusdec(1)}\geqslant U_n$. Taking the expectation, we can deduce that
$$\phi_{n,\,p_n,\,1}^1\big(U_n\big)\ \geqslant\ \Ent{\frac{(n-2)^2}{2}}(1-p)^4
\ \eqninfty\ \frac{n^2(1-p)^4}{2}\,.$$
Therefore, using the fact that~$a>31/16>29/16$, we have
$$1+n^{33/2-8a}\ =\ o\left(n^2\right)
\ =\ o\Big(\phi_{n,\,p_n,\,1}^1\big(U_n\big)\Big)\,.$$
Yet, under the law~$\phi_{n,\,p_n,\,1}^1$, the variables appearing in the sum~(\ref{somme_clusters_1}) are mutually independent, whence by Hoeffding's inequality,
$$\limn\,\phi_{n,\,p_n,\,1}^1\Big(\,U_n< 1+n^{33/2-8a}\,\Big)
\ =\ 0\,.$$
\correct{Using that~$\abs{\Clusdec(1)}\geqslant U_n$ and recalling~(\ref{comp_stoch}), this leads to}
$$\limn\,\Probapn\Big(\,\abs{\Clusdec(1)}\geqslant 1+n^{33/2-8a}\,\Big)\ =\ 1\,.$$
\correct{Combining this with~(\ref{Mn_FK_dev_sup_step14}),~(\ref{Mn_FK_eq_Cmax_2}) and~(\ref{condition2})}, we obtain lemma~\ref{Mn_FK_proba_Fn}.
\end{proof}

\subsection{Surgery on the set of vertices connected to the boundary}

\label{section_chirurgie}

We explain here the surgery step on~$\Mn$, starting from~$\omega\in\mathcal{G}_n$.

\begin{lemma}
\label{lemme_proba_Rn}
Let~$K>0$, and let us consider the event
\begin{equation}
\label{definition_Rn}
\mathcal{R}_n=\acc{\,\begin{aligned}
&\exists\,H\subset\En,\ \ \abs{H}\leqslant Kn^{a/2},\ \ \abs{\Mn(\omega_H)}=\Ceil{\frac{\abs{\Mn(\omega)}+b_n}{2}}\\
&\quad\qquadet \max\limits_{C\in\Clusdec}\abs{C}\leqslant n^{33/2-8a}\leqslant \abs{\Clusdec(1)}-1
\end{aligned}\,}\,.
\end{equation}
For all~$a\in(31/16,\,2)$ such that the postulates~$\mathcal{FSS}(16-8a)$ hold, with~$p_n$ defined as in section~\ref{section_quasi_surcritique}, there exists~$K>0$ such that~\smash{$\limn\Probapn\big(\mathcal{R}_n\big)=1$}.
\end{lemma}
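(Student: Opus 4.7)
The plan is to work conditionally on the event $\mathcal{G}_n$ of Lemma~\ref{Mn_FK_proba_Fn}, which has $\Probapn$-probability tending to $1$; it then suffices to exhibit the required set $H$ deterministically for every configuration $\omega\in\mathcal{G}_n$, the second clause in the definition~(\ref{definition_Rn}) of $\mathcal{R}_n$ being already contained in the definition~(\ref{Mn_FK_definition_Gn}) of $\mathcal{G}_n$. Fix such an $\omega$ and set $\Delta=\lfloor(|\Mn(\omega)|-b_n)/2\rfloor$, which is the exact number of vertices of $\Mn(\omega)$ that must be severed from $\partial\Lambda(n)$ in order to reach the target value $\lceil(|\Mn(\omega)|+b_n)/2\rceil$. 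Using the asymptotics~(\ref{Mn_FK_eq_bn}) for $b_n$ together with the bounds in~(\ref{Mn_FK_definition_Gn}), we have $0\leq\Delta\leq 2n^a\leq|\Mn(\omega)\cap\Lambda(n_1)|$, so there are enough vertices in the smaller box $\Lambda(n_1)$ to accommodate the required disconnections.

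Next we perform a coarse pigeonhole cut. Slice the annulus $\Lambda(n)\setminus\Lambda(n_1)$ into approximately $(n-n_1)/2\sim n/12$ disjoint concentric square ``shells''. Any open edge joining two consecutive shells along some open path from $\Lambda(n_1)$ to $\partial\Lambda(n)$ has both endpoints in $\Mn(\omega)$, so the total number of such open crossing edges, summed over all shells, is at most $4|\Mn(\omega)\setminus\Lambda(n_1)|\leq 16n^a$. By the pigeonhole principle, there is a shell crossed by at most $C n^{a-1}$ open edges, for an absolute constant $C$; closing these edges disconnects $\Lambda(n_1)$ --- and every vertex strictly inside the chosen shell --- from $\partial\Lambda(n)$.

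Cutting this full shell generically over-removes, disconnecting all $\geq 2n^a$ vertices of $\Mn(\omega)\cap\Lambda(n_1)$, typically strictly more than the desired $\Delta$. The last step therefore consists of replacing part of this shell cut by a more delicate cut that leaves exactly $|\Mn(\omega)|-\Delta$ vertices of $\Mn(\omega)$ still connected to $\partial\Lambda(n)$. This is where the \emph{geometric lemma} of~\cite{ModelsofSOCinPerco} enters: it is a planar isoperimetric-type device which, given the pigeonhole cut and the fact that the over-removed count lies between $0$ and $O(n^a)$, produces the required refinement at the cost of $O(n^{a/2})$ additional edge modifications. Combining, the resulting set $H$ satisfies $|H|\leq Cn^{a-1}+C' n^{a/2}=O(n^{a/2})$ since $a<2$, which is exactly the bound claimed by the lemma.

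The pigeonhole step is elementary, and all the probabilistic input is already packaged in Lemma~\ref{Mn_FK_proba_Fn}; the main obstacle is the fine adjustment. Reaching the \emph{exact} count $\Delta$ --- rather than some value anywhere in $[0,|\Mn\setminus\Lambda(n_1)|]$ --- while keeping the extra cost at $O(n^{a/2})$ is delicate, and this is where the geometric lemma of~\cite{ModelsofSOCinPerco} must be carefully imported and adapted to our near-critical FK-Ising setting.
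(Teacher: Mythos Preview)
Your sketch is essentially the paper's proof: work deterministically on~$\mathcal{G}_n$, use the pigeonhole principle on the concentric square contours~$\partial^e\Lambda(j)$ in the annulus~$\Lambda(n)\setminus\Lambda(n_1)$ to find a cut~$H_0$ of size~$O(n^{a-1})$ whose closure over-removes, and then refine with the geometric lemma of~\cite{ModelsofSOCinPerco} at cost~$O(n^{a/2})$.

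The one place where the paper is more explicit than your outline is the refinement step, which you correctly flag as delicate. The geometric lemma of~\cite{ModelsofSOCinPerco} applies to a single \emph{connected} graph, so it cannot be invoked directly on ``the over-removed region''. The paper bridges this as follows: it takes a subset~$H_1\subset H_0$ that is \emph{maximal} among those with~$|\Mn(\omega_{H_1})|$ still at least the target; by maximality there is an edge~$e\in H_0\setminus H_1$ whose closure drops~$|\Mn|$ strictly below the target, and the piece that falls off is a single connected cluster~$C_v\subset\Mn(\omega)$ of size at most~$4n^a$. The geometric lemma is then applied to~$C_v$ alone, yielding~$H_2$ with~$|H_2|\leqslant K_0\sqrt{|C_v|}=O(n^{a/2})$, and one sets~$H=H_1\cup H_2$. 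This maximal-subset trick is precisely the ``careful adaptation'' you allude to in your last paragraph.
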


\begin{proof}
Let~$a\in(31/16,\,2)$ be such that~$\mathcal{FSS}(16-8a)$ holds. Let~$n\geqslant 12$ and~$n_1=\Ent{5n/6}$, let~$p_n$ be defined as in section~\ref{section_quasi_surcritique} and~$\omega\in\mathcal{G}_n$.
We are going to construct a set of edges~$H=H(\omega)\subset\En$ such that
$$\abs{\Mn(\omega_{H})}\ =\ \Ceil{\frac{\abs{\Mn(\omega)}+b_n}{2}}\,.$$
For every~$j\geqslant 1$, we consider the edge set~$E_j=\partial^e\Lambda(j)$. If~$j,\,k\geqslant 1$ are such that~$\abs{j-k}\geqslant 2$, then~$E_j\cap E_k=\emptyset$. From this we deduce by the pigeonhole principle that there exists an integer~$j(\omega)$ satisfying
$$n_1\ \leqslant\ 2j(\omega)\ \leqslant\ n-2
\qquadet
\Big|\,E_{2j(\omega)}\cap\Edge{\Mn(\omega)}\Big|\ \leqslant\ \frac{\big|\,\Edge{\Mn(\omega)}\big|}{L_n}\,,$$
where
$$L_n\ =\ \Ent{\frac{n-2}{2}}-\Ceil{\frac{n_1}{2}}+1\ \eqninfty\ \frac{n}{12}\,.$$
We choose such an integer~$j(\omega)$ and we let~$H_0(\omega)=E_{2j(\omega)}\cap\Edge{\Mn(\omega)}$. Recall that, by the definition~(\ref{Mn_FK_definition_Gn}) of~$\mathcal{G}_n$, we have~$\abs{\Mn(\omega)}\leqslant 4n^a$, which entails that
$$\abs{H_0(\omega)}
\ \leqslant\ \frac{2\abs{\Mn(\omega)}}{L_n}
\ \leqslant\ \frac{8n^a}{L_n}\,.$$
Again by the definition of the event~$\mathcal{G}_n$, we know that
$$\abs{\Mn(\omega)}
\ \geqslant\ \abs{\Mn(\omega)\cap\Lambda(n_1)}
\ \geqslant\ 2n^a
\ >\ b_n+1\,,$$
for~$n$ large enough (because~$b_n+1\sim n^a\sqrt{T_c}$ and~$\sqrt{T_c}<2$), \correct{implying that}
$$\abs{\Mn(\omega)}\ >\ \Ceil{\frac{\abs{\Mn(\omega)}+b_n}{2}}\,.$$
Thus, we may consider a subset~$H_1(\omega)\subset H_0(\omega)$, maximal in the sense of inclusion among the subsets satisfying
\begin{equation}
\label{Mn_FK_wH1}
\abs{\Mn\big(\omega_{H_1(\omega)}\big)}\ \geqslant\ \Ceil{\frac{\abs{\Mn(\omega)}+b_n}{2}}\,.
\end{equation}
Due to the fact that the set~$H_0(\omega)$ separates~$\Lambda(n_1)$ from~$\partial\Lambda(n)$, we have
$$\abs{\Mn\big(\omega_{H_0(\omega)}\big)}\ \leqslant\ \abs{\Mn(\omega)}-\abs{\Mn(\omega)\cap\Lambda\left(n_1\right)}\ \leqslant\ \abs{\Mn(\omega)}-2n^a\,.$$
Yet, because~$\omega$ realizes the event~$\mathcal{G}_n$, we have~$\abs{\Mn(\omega)}\leqslant4n^a$, whence
$$\abs{\Mn\big(\omega_{H_0(\omega)}\big)}\ \leqslant\ \frac{\abs{\Mn(\omega)}}{2}+\frac{4n^a}{2}-2n^a\ =\ \frac{\abs{\Mn(\omega)}}{2}
\ <\ \Ceil{\frac{\abs{\Mn(\omega)}+b_n}{2}}\,.$$
It follows that the inclusion~$H_1(\omega)\subset H_0(\omega)$ is strict, and hence,~$H_1(\omega)$ being maximal, there exists an edge~$e\in H_0(\omega)\backslash H_1(\omega)$ such that
\begin{equation}
\label{Mn_FK_cond_e}
\abs{\Mn\big(\omega_{H_1(\omega)\cup\acc{e}}\big)}\ <\ \Ceil{\frac{\abs{\Mn(\omega)}+b_n}{2}}\,.
\end{equation}
Thus, closing the edge~$e$ in~$\omega_{H_1(\omega)}$ strictly decreases~$\abs{\Mn}$, hence one of the endpoints of~$e$, \correct{say}~$v$, must end up disconnected from~$\partial\Lambda(n)$ when closing~$e$ in~$\omega_{H_1(\omega)}$. Let us write~$C_v$ for the cluster of~$v$ in~$\omega_{H_1(\omega)\cup\acc{e}}$, and let~$E_v$ be the set of the edges of~$\Edge{C_v}$ which are open in~$\omega_{H_1(\omega)\cup\acc{e}}$. Then, because~$C_v$ is the piece which is disconnected when closing the edge~$e$ in~$\omega_{H_1(\omega)}$, we have
\begin{equation}
\label{Mn_FK_Cv}
\abs{C_v}\ =\ \abs{\Mn\big(\omega_{H_1(\omega)}\big)}-\abs{\Mn\big(\omega_{H_1(\omega)
\cup\acc{e}}\big)}\,.
\end{equation}
\begin{figure}[ht]
\begin{center}
\begin{tikzpicture}[scale=0.85]
\filldraw[gray!50, draw=black] (0,0) --++ (3,0) --++ (0,3) --++ (-3,0) -- cycle;
\filldraw[fill=white] (2,0.4) to[out=40,in=220] (2.3,1.4) to[out=40,in=240] (2.6,2.1) to[out=60,in=-45] (2.8,2.85) to[out=135,in=30] (2.4,2.6) to[out=210,in=-45] (1.4,2.6) to[out=135,in=70] (0.15,2.7) to[out=250,in=105] (0.4,1.9) to[out=285,in=45] (1.3,1.2) to[out=225,in=130] (1,0.4) to[out=310,in=180] (1.5,0.15) to[out=0,in=220] (2,0.4);
\draw (1.5,-0.6) node{$\Mn(\omega)$};
\draw[magenta,dashed] (0.6,0.6) to (2.4,0.6) to (2.4,2.4) to (0.6,2.4) to (0.6,0.6);
\draw[magenta] (1.2,2.1) node{$\Lambda(n_1)$};
\draw[->,thick] (3.5,1.5) --++ (1,0);
\draw (4,1) node{close~$H_0$};

\begin{scope}[xshift=5cm]
\filldraw[gray!50, draw=black] (0,0) --++ (3,0) --++ (0,3) --++ (-3,0) -- cycle;
\filldraw[fill=white] (2,0.4) to[out=0,in=180] (2.6,0.4) to[out=90,in=-90] (2.6,2.1) to[out=60,in=-45] (2.8,2.85) to[out=135,in=30] (2.4,2.6) to[out=180,in=0] (1.4,2.6) to[out=135,in=70] (0.15,2.7) to[out=250,in=105] (0.4,1.9) to[out=-90,in=90] (0.4,0.4) to[out=0,in=180] (1,0.4) to[out=310,in=180] (1.5,0.15) to[out=0,in=220] (2,0.4);
\draw[red, ultra thick] (2,0.4) to (2.6,0.4) to (2.6,2.1) (2.4,2.6) to (1.4,2.6) (0.4,1.9) to (0.4,0.4) to (1,0.4);
\draw (1.5,-0.6) node{$\Mn\big(\omega_{H_0(\omega)}\big)$};
\draw[red] (2.25,1.1) node{$H_0$};
\end{scope}

\draw[->,thick] (8.5,1.5) --++ (1,0);
\draw (9,1) node{reopen};
\draw (9,0.5) node{$H_0\backslash H_1$};

\begin{scope}[xshift=10cm]
\filldraw[gray!50, draw=black] (0,0) --++ (3,0) --++ (0,3) --++ (-3,0) -- cycle;
\filldraw[fill=white] (2,0.4) to[out=0,in=180] (2.6,0.4) to[out=90,in=-90] (2.6,0.8) to[out=210,in=220] (2.3,1.4) to[out=40,in=145] (2.6,1.4) to[out=90,in=-90] (2.6,2.1) to[out=60,in=-45] (2.8,2.85) to[out=135,in=30] (2.4,2.6) to[out=180,in=0] (1.4,2.6) to[out=135,in=70] (0.15,2.7) to[out=250,in=105] (0.4,1.9) to[out=-90,in=90] (0.4,1.7) to[out=-20,in=155] (0.6,1.6) to[out=-25,in=45] (1.2,1.2) to[out=225,in=20] (1,1.1) to[out=200,in=60] (0.4,0.8) to[out=-90,in=90] (0.4,0.4) to[out=0,in=180] (1,0.4) to[out=310,in=180] (1.5,0.15) to[out=0,in=220] (2,0.4);
\draw[dotted,ultra thick,blue] (2.6,0.8) to (2.6,1.4) (0.4,1.7) to (0.4,0.8);
\draw (1.5,-0.6) node{$\Mn\big(\omega_{H_1(\omega)}\big)$};
\draw[blue] (1.5,2.1) node{$H_0\backslash H_1$};
\draw [blue,decoration={markings,mark=at position 1 with
    {\arrow[scale=2,>=stealth]{>}}},postaction={decorate}] (1.6,1.8) to (2.5,1.1);
\draw [blue,decoration={markings,mark=at position 1 with
    {\arrow[scale=2,>=stealth]{>}}},postaction={decorate}] (1.4,1.8) to (0.5,1.3);
\end{scope}

\draw[->,thick] (13.5,1.5) --++ (1,0);
\draw (14,1) node{close~$H_2$};

\begin{scope}[xshift=15cm]
\filldraw[gray!50, draw=black] (0,0) --++ (3,0) --++ (0,3) --++ (-3,0) -- cycle;
\filldraw[fill=white] (2,0.4) to[out=0,in=180] (2.6,0.4) to[out=90,in=-90] (2.6,0.8) to[out=210,in=220] (2.3,1.4) to[out=40,in=145] (2.6,1.4) to[out=90,in=-90] (2.6,2.1) to[out=60,in=-45] (2.8,2.85) to[out=135,in=30] (2.4,2.6) to[out=180,in=0] (1.4,2.6) to[out=135,in=70] (0.15,2.7) to[out=250,in=105] (0.4,1.9) to[out=-90,in=90] (0.4,1.7) to[out=-20,in=155] (0.6,1.6) to[out=-120,in=170] (1,1.1) to[out=200,in=60] (0.4,0.8) to[out=-90,in=90] (0.4,0.4) to[out=0,in=180] (1,0.4) to[out=310,in=180] (1.5,0.15) to[out=0,in=220] (2,0.4);
\draw[red,ultra thick] (0.6,1.6) to[out=-120,in=170] (1,1.1);
\draw (1.5,-0.6) node{$\Mn\left(\omega_{H(\omega)}\right)$};
\draw[red] (1,1.5) node{$H_2$};
\end{scope}
\end{tikzpicture}
\end{center}
\caption{Steps of the surgical procedure to go from~$\abs{\Mn}=b_n+x$ to~\smash{$\abs{\Mn}=b_n+x/2$}.}
\end{figure}

\noindent Defining now
$$m\ =\ \Ceil{\frac{\abs{\Mn(\omega)}+b_n}{2}}-\abs{\Mn\big(\omega_{H_1(\omega)\cup\acc{e}}\big)}\,,$$
it follows from equations~(\ref{Mn_FK_wH1}),~(\ref{Mn_FK_cond_e}) and~(\ref{Mn_FK_Cv}) that~$1\leqslant m\leqslant\abs{C_v}$.
Therefore, according to lemma~1 of~\cite{ModelsofSOCinPerco}, there exists a subset~$H_2(\omega)\subset E_v$ with
$$\abs{H_2(\omega)}\ \leqslant\ K_0\sqrt{\abs{C_v}}\,,$$
where~$K_0>0$ is a fixed constant, such that the connected component of~$v$ in the graph~$\big(C_v,\,E_v\backslash H_2(\omega)\big)$ contains exactly~$m$ vertices. We then consider the set of edges~$H(\omega)=H_1(\omega)\cup H_2(\omega)$, whose cardinality is
$$\abs{H(\omega)}
\ \leqslant\ \abs{H_0(\omega)}+\abs{H_2(\omega)}
\ \leqslant\ \frac{8n^a}{L_n}+K_0\sqrt{4 n^a}\\
\ =\ O\big(n^{a/2}\big)\,.$$
Now take~$K>0$ (independent of~$n$ and~$\omega$) such that, for every~$n\geqslant 1$,
$$\frac{8n^a}{L_n}+K_0\sqrt{4 n^a}
\ \leqslant\ Kn^{a/2}\,.$$
Then, we have~$\abs{H(\omega)}\leqslant K n^{a/2}$.
Besides, by construction of~$H$, we have
$$\abs{\Mn\big(\omegaH\big)}\ =\ \abs{\Mn\big(\omega_{H_1(\omega)\cup\acc{e}}\big)}+m\ =\ \Ceil{\frac{\abs{\Mn(\omega)}+b_n}{2}}\,.$$
At the end of the day, we have proved the inclusion~$\mathcal{G}_n\subset\mathcal{R}_n$ and, consequently, the result follows from lemma~\ref{Mn_FK_proba_Fn}.
\end{proof}

What do we get by closing the edges of~$H(\omega)$ for~$\omega\in\mathcal{R}_n$? We will show that the resulting configurations~$\omegaH$ fall into the event
\begin{equation}
\label{Mn_FK_definition_Sn}
\mathcal{S}_n\ =\ \acc{\,\begin{aligned}
&\exists\,\Czero\subset\Clusdec\ :\ \abs{\Czero}\leqslant 2Kn^{a/2},\quad \abs{\Mn}-\sum_{C\in\Czero}{\abs{C}}=b_n\\
&\quad \quadet\max\limits_{C\in\Clusdec\backslash\Czero}\abs{C}\,\leqslant\, n^{33/2-8a}\,\leqslant\,\abs{\Clusdec(1)\backslash\Czero}
\end{aligned}\,}\,.
\end{equation}
The set~$\Czero$ corresponds to the piece that was disconnected from the boundary to \correct{go} half the way from~$\abs{\Mn}$ to~$b_n$, as explained in section~\ref{section_heuristique_Zn}. This event will allow us in section~\ref{section_minoration_Zn} to build an Ising configuration with~$m(\sigma)=b_n$, through the Edwards-Sokal coupling. To this end, we will force the clusters~$C\in\Czero$ to be assigned a negative spin and we will hope for the contribution of the other clusters to cancel out. \correct{The two last conditions appearing in~$\mathcal{S}_n$} will enable us to control this contribution. But first of all, we show the following estimate:

\begin{lemma}
\label{Mn_FK_lemme_minoration_Z_n}
For every~$a\in(31/16,2)$ satisfying~$\mathcal{FSS}(16-8a)$, with~$p_n$ defined as in section~\ref{section_quasi_surcritique}, there exists a constant~$K>0$ such that
$$\liminfn\,\frac{1}{(\ln n)n^{a/2}}\,\ln\Probapn\big(\mathcal{S}_n\big)\ >\ -\infty\,.$$
\end{lemma}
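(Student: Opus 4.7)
The plan is to deduce the lower bound on $\Probapn(\mathcal{S}_n)$ from the near-certainty $\Probapn(\mathcal{R}_n)\to 1$ established in lemma~\ref{lemme_proba_Rn}, via the surgery cost lemma~\ref{lemme_fermeture_H}. The idea is essentially tautological: $\mathcal{S}_n$ is the event one reaches by actually closing the edge set $H(\omega)$ whose existence is guaranteed on $\mathcal{R}_n$, so the cost of this forced closure is the only thing to pay.

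First I would fix $a\in(31/16,2)$ with $\mathcal{FSS}(16-8a)$ and take $K>0$ as in lemma~\ref{lemme_proba_Rn}. For each $\omega\in\mathcal{R}_n$, I would select (measurably) a witness set $H(\omega)\subset\En$ with $|H(\omega)|\leqslant Kn^{a/2}$, and set $\psi(\omega):=\omega_{H(\omega)}$. The geometric heart of the argument is the inclusion $\psi(\mathcal{R}_n)\subset\mathcal{S}_n$. Writing $\omega'=\omega_{H(\omega)}$, I would define $\Czero$ to be the collection of clusters of $\omega'$ whose vertex sets lie in $\Mn(\omega)\setminus\Mn(\omega')$: these are exactly the pieces of $\Mn(\omega)$ sheared off the boundary by closing $H$. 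Then one checks: (i) $\Czero\subset\Clusdec(\omega')$ by construction, and $|\Czero|\leqslant|H(\omega)|\leqslant Kn^{a/2}\leqslant 2Kn^{a/2}$ since each edge closure creates at most one new cluster; (ii) $\sum_{C\in\Czero}|C|=|\Mn(\omega)|-|\Mn(\omega')|$, hence the defining property $|\Mn(\omega')|=\Ceil{(|\Mn(\omega)|+b_n)/2}$ of $\mathcal{R}_n$ gives $|\Mn(\omega')|-\sum_{C\in\Czero}|C|=b_n$, where the parity choice $b_n\equiv n^2\modulo{2}$ (combined with the surgical adjustment carried out in the proof of lemma~\ref{lemme_proba_Rn}) ensures that the ceiling contributes no off-by-one error; (iii) every cluster in $\Clusdec(\omega')\setminus\Czero$ is contained in a cluster of $\Clusdec(\omega)$, so the size bound $n^{33/2-8a}$ is inherited from $\mathcal{R}_n$; (iv) singletons in $\Clusdec(1,\omega)$ have no open edges to close, remain singletons in $\omega'$, and are not in $\Czero$, so $|\Clusdec(1,\omega')\setminus\Czero|\geqslant|\Clusdec(1,\omega)|\geqslant n^{33/2-8a}$.

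Once the inclusion is in hand, applying lemma~\ref{lemme_fermeture_H} to $\mathcal{A}=\mathcal{R}_n$, this map $\psi$, and $N=\max_\omega|H(\omega)|\leqslant Kn^{a/2}$ yields
$$\Probapn(\mathcal{S}_n)\ \geqslant\ \Probapn\big(\psi(\mathcal{R}_n)\big)\ \geqslant\ \Probapn(\mathcal{R}_n)\left[\frac{p_n\wedge(1-p_n)}{3\,p_n\,n^2}\right]^{Kn^{a/2}}.$$
Since $\Probapn(\mathcal{R}_n)\to 1$ by lemma~\ref{lemme_proba_Rn} and $p_n\to p_c(2)\in(0,1)$ by~(\ref{Mn_FK_equivalent_pns}), the bracketed expression contributes $-O(\ln n)$ per closed edge, for a total of $-O(n^{a/2}\ln n)$. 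Dividing by $(\ln n)n^{a/2}$ delivers the announced liminf $>-\infty$.

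The main — and really the only — delicate point is the inclusion $\psi(\mathcal{R}_n)\subset\mathcal{S}_n$, specifically the \emph{exact} equality $|\Mn(\omega')|-\sum_{C\in\Czero}|C|=b_n$, which is why the parity constraint $b_n\equiv n^2\modulo{2}$ was built into the definition of $b_n$ in section~\ref{section_quasi_surcritique}. Everything else follows routinely from the design of the two events and the fact that closing edges can only split, not merge, clusters; no further probabilistic estimate is needed beyond lemmas~\ref{lemme_proba_Rn} and~\ref{lemme_fermeture_H}.
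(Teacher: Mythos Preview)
Your overall architecture is right and matches the paper: take $\omega\in\mathcal{R}_n$, close the witness set $H(\omega)$, identify the detached pieces as $\Czero$, and pay for the closure via lemma~\ref{lemme_fermeture_H}. But your step~(ii) contains a real gap. From $|\Mn(\omega')|=\Ceil{(|\Mn(\omega)|+b_n)/2}$ and $\sum_{C\in\Czero}|C|=|\Mn(\omega)|-|\Mn(\omega')|$ you get
\[
|\Mn(\omega')|-\sum_{C\in\Czero}|C|\ =\ 2\Ceil{\frac{|\Mn(\omega)|+b_n}{2}}-|\Mn(\omega)|\,,
\]
which equals $b_n$ when $|\Mn(\omega)|+b_n$ is even, but equals $b_n+1$ when it is odd. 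Your claim that ``the parity choice $b_n\equiv n^2\modulo{2}$ (combined with the surgical adjustment carried out in the proof of lemma~\ref{lemme_proba_Rn})'' removes this off-by-one is incorrect: nothing in $\mathcal{R}_n$ or in the proof of lemma~\ref{lemme_proba_Rn} constrains the parity of $|\Mn(\omega)|$, and the congruence $b_n\equiv n^2\modulo{2}$ is used only later, in section~\ref{section_minoration_Zn}, for an entirely different parity check.

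The paper handles the odd case explicitly by enlarging $\Czero$ with one singleton cluster $C_1\in\Clusdec(1,\omega)$, which brings the identity down to $b_n$; this is precisely why the events $\mathcal{G}_n$ and $\mathcal{R}_n$ carry the margin $|\Clusdec(1)|\geqslant n^{33/2-8a}+1$ rather than merely $\geqslant n^{33/2-8a}$, and why $|\Czero'|\leqslant|H(\omega)|+1\leqslant 2Kn^{a/2}$ appears in the bound. Once you add this one-line correction, your argument goes through exactly as the paper's does; without it, the inclusion $\psi(\mathcal{R}_n)\subset\mathcal{S}_n$ fails on configurations with $|\Mn(\omega)|\not\equiv b_n\modulo{2}$.
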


To construct a configuration realizing this event~$\mathcal{S}_n$, we start with~$\correct{\omega\in\mathcal{R}_n}$ and we close the set of edges~$H(\omega)$ given by the definition~(\ref{definition_Rn}) of~$\mathcal{R}_n$. This allows us to derive a lower bound on the probability of the event~$\mathcal{S}_n$, thanks to lemma~\ref{lemme_fermeture_H} which controls the price for closing edges.

\begin{proof}[Proof of lemma~\ref{Mn_FK_lemme_minoration_Z_n}]
Let~$a\in(31/16,2)$ such that~$\mathcal{FSS}(16-8a)$ holds, let~$p_n$ defined as in section~\ref{section_quasi_surcritique}, and take~$K>0$ given by lemma~\ref{lemme_proba_Rn}, such that
\begin{equation}
\label{limRn}
\limn\,\Probapn\big(\mathcal{R}_n\big)\ =\ 1\,.
\end{equation}
Let~$\correct{\omega\in\mathcal{R}_n}$. By the definition~(\ref{definition_Rn}) of~$\mathcal{R}_n$, we can take~$H(\omega)\subset\En$ such that
$$\abs{H(\omega)}\ \leqslant\ Kn^{a/2}
\qquadet
\abs{\Mn\big(\omegaH\big)}\ =\ \Ceil{\frac{\abs{\Mn(\omega)}+b_n}{2}}\,.$$
By shrinking the set~$H(\omega)$ if necessary, we can assume that~$H(\omega)\subset\Edge{\Mn(\omega)}$, which ensures that the clusters which do not touch the boundary in~$\omega$ are left undamaged in~$\omegaH$, so that~\smash{$\Clusdec(\omega)\subset\Clusdec\big(\omegaH\big)$}.
We then want to show that~$\correct{\omegaH\in\mathcal{S}_n}$. To this end, we have to build a set~\smash{$\Czero\subset\Clusdec\big(\omegaH\big)$} which satisfies the conditions of the definition~(\ref{Mn_FK_definition_Sn}) of~$\mathcal{S}_n$. A natural candidate is the set of the clusters in~$\omegaH$ which were connected to the boundary before the closure of the edges of~$H(\omega)$, that is to say in the configuration~$\omega$, but are not anymore in the modified configuration~$\omegaH$. Thus, we define
$$\Czero\ =\ \Czero(\omega)\ =\ \Clusdec\big(\omegaH\big)\backslash\Clusdec(\omega)\,.$$
Because we have taken~$H(\omega)\subset\Edge{\Mn(\omega)}$, the clusters of~$\Czero$ must be included in~$\Mn$, whence
$$\Czero\ =\ \Big\{\,C\in\Clusdec\big(\omegaH\big)\ :\ C\subset\Mn(\omega)\,\Big\}\,.$$
Therefore, we have
$$\bigcup_{C\in\Czero}{C}\ =\ \Mn(\omega)\,\backslash\,\Mn\big(\omegaH\big)\,,$$
which implies that
\begin{align*}
\abs{\Mn\big(\omegaH\big)}-\sum_{C\in\Czero}{\abs{C}}
\ &=\ 2\abs{\Mn\big(\omegaH\big)}-\abs{\Mn(\omega)}\\
\ &=\ 2\Ceil{\frac{\abs{\Mn(\omega)}+b_n}{2}}-\abs{\Mn(\omega)}\,,
\end{align*}
\correct{where we have used the fact that~$\omega\in\mathcal{R}_n$.}
If~$\abs{\Mn(\omega)}+b_n$ is even, then we have
$$\abs{\Mn\big(\omegaH\big)}-\sum_{C\in\Czero}{\abs{C}}
\ =\ b_n\,,$$
and
$$\abs{\Clusdec\left(1,\,\omegaH\right)\!\backslash\Czero}
\ =\ \abs{\Clusdec(1,\,\omega)}
\ \geqslant\ n^{33/2-8a}+1\,.$$
Assume now that~$\abs{\Mn(\omega)}+b_n$ is odd. In this case, we have
$$\abs{\Mn\big(\omegaH\big)}-\sum_{C\in\Czero}{\abs{C}}
\ =\ b_n+1\,.$$
Therefore, we need to add a unit cluster in~$\Czero$. By the definition of~$\mathcal{R}_n$, we have~$\abs{\Clusdec(1,\,\omega)}\geqslant 1$,
which allows us to choose a cluster~$C_1\in\Clusdec(1,\,\omega)$. We then let~$\Czero'=\Czero\cup \acc{C_1}$, and we have
$$\abs{\Mn\big(\omegaH\big)}-\sum_{C\in\Czero'}{\abs{C}}\ =\ b_n\,.$$
Besides, we have 
$$\abs{\Clusdec\left(1,\,\omegaH\right)\!\backslash\Czero'}
\ =\ \abs{\Clusdec(1,\,\omega)\backslash\acc{C_1}}
\ =\ \abs{\Clusdec(1,\,\omega)}-1
\ \geqslant\ n^{33/2-8a}\,.$$
This is why we gave ourselves a margin of~$1$ between the condition on the unit clusters in~$\mathcal{R}_n$ and the one in~$\mathcal{S}_n$.
Thus, if we set~$\Czero'=\Czero$ in the case where~$\abs{\Mn(\omega)}+b_n$ even, then whatever the parity of~$\abs{\Mn(\omega)}+b_n$, we have
$$\abs{\Mn\big(\omegaH\big)}-\sum_{C\in\Czero'}{\abs{C}}\ =\ b_n
\qquadet
\abs{\Clusdec\left(1,\,\omegaH\right)\!\backslash\Czero'}\ \geqslant\ n^{33/2-8a}\,.$$
In addition to that, due to the fact that~\smash{$\Clusdec\big(\omegaH\big)\backslash\Czero'\subset\Clusdec(\omega)$}, we know that
$$\max\limits_{C\in\Clusdec(\omegaH)\backslash\Czero'}\abs{C}
\ \leqslant\ \max\limits_{C\in\Clusdec(\omega)}\abs{C}
\ \leqslant\ n^{33/2-8a}\,.$$
Eventually, note that the closure of one edge cannot increase the number of open clusters by more than~$1$, whence~$\abs{\Czero}\leqslant\abs{H(\omega)}$ and thus
$$\abs{\Czero'}
\ \leqslant\ \abs{\Czero}+1
\ \leqslant\ \abs{H(\omega)}+1
\ \leqslant\ Kn^{a/2}+1
\ \leqslant\ 2Kn^{a/2}\,,$$
upon increasing~$K$ if necessary.
Thus, we have proved that the application~$\psi:\omega\in\mathcal{R}_n\mapsto\omega_{H(\omega)}$
takes its values in in~$\mathcal{S}_n$. Using lemma~\ref{lemme_fermeture_H}, we deduce that
$$\Probapn\big(\mathcal{S}_n\big)
\ \geqslant\ \Probapn\Big(\psi\big(\mathcal{R}_n\big)\Big)\\
\ \geqslant\ \Probapn\big(\mathcal{R}_n\big)\left(\frac{1-p_n}{3n^2}\right)^{2Kn^{a/2}}\,.$$
\correct{Combining this with~(\ref{limRn}) and using that~$p_n\rightarrow p_c(2)\in(0,1)$, we obtain}
$$\liminfn\,\frac{\ln\Probapn\big(\mathcal{S}_n\big)}{(\ln n)n^{a/2}}
\ \geqslant\ -4K
\ >\ -\infty\,,$$
which is the desired result.
\end{proof}

\subsection{Moving to the Ising model}

\label{section_minoration_Zn}

Armed with the estimate given by lemma~\ref{Mn_FK_lemme_minoration_Z_n}, we are now in a position to prove our lower bound on the partition function.

\begin{proof}[Proof of lemma~\ref{Ising_lemme_minoration_Z_n}]
Let~$a\in(31/16,\,2)$ be such that the finite-size scaling postulates~$\mathcal{FSS}(16-8a)$ hold.
Let~$p_n$ and~$b_n$ be defined as in section~\ref{section_quasi_surcritique}. Let us consider
$$\Tns\ =\ \frac{2}{-\ln\left(1-p_n\right)}\ =\ \frac{b_n^2}{n^{2a}}\,.$$
Recall that, by rewriting~$Z_n$ in the form~(\ref{reecriture_Zn}), we have seen that it suffices to prove a lower bound on the probability that, under the law~\smash{$\Probapn$}, the magnetization is exactly~$b_n$.
As in the proof of lemmas~\ref{Ising_majo_surcritique} and~\ref{Ising_majo_souscritique}, we use Edwards-Sokal coupling to deduce the lower bound on~$Z_n$ from our result on the random-cluster model. Let~$\omega\correct{\sim}\Probapn$,~$(\varepsilon_C)_{C\subset\Lambda(n)}$ \correct{and~$\sigma$ defined as in section~(\ref{section_Ising_majo_surcritique}).}
For every configuration~$\correct{\omega\in\mathcal{S}_n}$, we choose a set~$\Czero(\omega)\subset\Clusdec(\omega)$ satisfying the properties in the definition~(\ref{Mn_FK_definition_Sn}) of~$\mathcal{S}_n$.
This enables us to define the event
$$\mathcal{T}_n\ =\ \mathcal{S}_n\cap\acc{\,\sum_{C\in\Clusdec\backslash\Czero}{\abs{C}\varepsilon_C}=0
\quadet
\forall\,C\in\Czero\quad \varepsilon_C=-\,}\,.$$
Note that, if~$\mathcal{T}_n$ occurs, it implies that
$$m(\sigma)\ =\ \left(\,\abs{\Mn(\omega)}-\sum_{C\in\Czero}{\abs{C}}\,\right)+\sum_{C\in\Clusdec\backslash\Czero}{\abs{C}\varepsilon_C}
\ =\ b_n+0\ =\ b_n\,,$$
so that our lower bound~(\ref{reecriture_Zn}) on the partition function becomes
\begin{equation}
\label{lien_Tn_Zn}
Z_n\ \geqslant\ \Proba\Big(\,m(\sigma)=b_n\,\Big)
\ \geqslant\ \Proba\big(\mathcal{T}_n\big)\,.
\end{equation}
We fix now a configuration~$\omega_0\in\mathcal{S}_n$, and we will reason conditionally on the event~$\acc{\omega=\omega_0}$. In this context, the variables~$\Clusdec=\Clusdec(\omega_0)$ and~$\Czero=\Czero(\omega_0)$ are henceforth fixed. We can write
\begin{equation}
\label{Ising_eq_Tn}
\Proba\Big(\,\mathcal{T}_n\ \Big|\ \omega=\omega_0\,\Big)\ =\ \frac{1}{2^{\abs{\Czero}}}\,\Proba\Bigg(\,\sum_{C\in\Clusdec\backslash\Czero}{\abs{C}\varepsilon_C}=0\,\Bigg)\,.
\end{equation}
Let~$N=\Ent{n^{33/2-8a}}$. Recall that, by the definition~(\ref{Mn_FK_definition_Sn}) of~$\mathcal{S}_n$, we have
\begin{equation}
\label{Cardmax_CnmCz}
\max\limits_{C\in\Clusdec\backslash\Czero}\abs{C}
\ \leqslant\ N
\qquadet
\abs{\Clusdec(1)\backslash\Czero}\ \geqslant\ N\,,
\end{equation}
which means that there are at least~$N$ unit clusters in~$\Clusdec\backslash\Czero$. 
The idea is to leave~$N$ of these clusters aside, and to control the magnetization of the other clusters of~$\Clusdec\backslash\Czero$ to show that it falls within the range~$[-N,\,N]$ with sufficient probability. We will then be able to use these~$N$ unit clusters kept aside to adjust the value of the magnetization and to force it to reach exactly~$0$. Thus, we consider a set~$\Cun\subset\Clusdec(1)\backslash\Czero$, with cardinality~$N$.
To ensure that the overall magnetization of the clusters of~$\Clusdec\backslash(\Czero\cup\Cun)$ falls within the range~$[-N,\,N]$, we sort these clusters depending on their cardinality. Then, for~$j\in\acc{1,\,\ldots,\,N}$, we control separately the contribution of the clusters of~$\Clusdec\backslash(\Czero\cup\Cun)$ which contain exactly~$j$ vertices. Therefore, for every~$j\in\acc{0,\,\ldots,\,N}$, we write
$$S_j\ =\ \sum_{\substack{C\in\Clusdec\backslash(\Czero\cup\Cun)\\ \abs{C}\leqslant j}}{\abs{C}\varepsilon_C}
\ =\ \sum_{i=1}^j{\,\sum_{C\in\Clusdec(i)\backslash(\Czero\cup\Cun)}{\abs{C}\varepsilon_C}}\,.$$
It follows from~(\ref{Cardmax_CnmCz}) that
$$S_N\ =\ \sum_{C\in\Clusdec\backslash(\Czero\cup\Cun)}{\abs{C}\varepsilon_C}\,.$$
Equation~(\ref{Ising_eq_Tn}) therefore leads to
\begin{equation}
\label{Ising_Tn_cond}
\Proba\Big(\,\mathcal{T}_n\ \Big|\ \omega=\omega_0\,\Big)\ =\ \frac{1}{2^{\abs{\Czero}}}\,\Proba\Bigg(\,S_N+\sum_{C\in\Cun}{\varepsilon_C}=0\,\Bigg)\,.
\end{equation}
By Stirling's formula, we can fix~$K_2\in(0,1)$ such that
$$\forall k\geqslant 1\qquad
\binom{2k}{k}\frac{1}{4^k}\ \geqslant\ \frac{K_2}{\sqrt{2k}}\,.$$
We then prove by induction on~$j$ that, for all~$j\in\acc{0,\,\ldots,\,N}$,
\begin{equation}
\label{rec_local_limite}
\Proba\Big(\,\abs{S_j} \leqslant N\,\Big)
\ \geqslant\ \left(\frac{K_2}{2n}\right)^{j}\,.
\end{equation}
The result is obvious for~$j=0$, since~$S_0=0$. Let~$j\in\acc{1,\,\ldots,\,N}$ be such that~(\ref{rec_local_limite}) holds for~$j-1$. We consider~$B_j=\Clusdec(j)\backslash(\Czero\cup\Cun)$,
which is such that
$$S_{j}\ =\ S_{j-1}+j\sum_{C\in B_j}{\varepsilon_C}\,.$$
If~$\correct{B_j=\emptyset}$, then the inequality~(\ref{rec_local_limite}) follows from the induction hypothesis. Now suppose that~$\correct{B_j\neq\emptyset}$.
If~$\abs{B_j}$ is even, then to obtain a null contribution from the clusters~$C\in B_j$ to the total magnetization, we need \correct{exactly} half of these clusters to choose a~$+$ spin. Thus, for~$\abs{B_j}$ even, we have
$$\Proba\Big(\abs{S_{j}}\leqslant N\ \Big|\ \abs{S_{j-1}}\leqslant N\Big)
\ \geqslant\ \Proba\Bigg(\,\sum_{C\in B_j}{\varepsilon_C}=0\,\Bigg)
\ =\ \binom{\abs{B_j}}{\abs{B_j}/2}\frac{1}{2^{\abs{B_j}}}
\ \geqslant\ \frac{K_2}{n}\,.$$
Assume now that~$\abs{B_j}$ is odd, and choose an arbitrary cluster~$C_0\in B_j$. To control~$S_j$, we request a null overall contribution from the clusters of~$B_j\backslash\acc{C_0}$, and we ask for the extra term coming from~$C_0$ to have a sign opposed to~$S_{j-1}$, which ensures that~$\abs{S_{j}}$ remains in the interval~$[-N,\,N]$. Let us consider
$$\eta\,:\,x\in\Z\ \longmapsto\ -\mathrm{sgn}(x)\ =\ \left\{\begin{aligned}
&+\text{ if }x\leqslant 0\,,\\
&-\text{ otherwise.}
\end{aligned}\right.$$
We can write, for~$\abs{B_j}$ odd,
\begin{align*}
\Proba\Big(\,\abs{S_{j}}\leqslant N\ \Big|\ \abs{S_{j-1}}\leqslant N\,\Big)
&\ \geqslant\ \Proba\Bigg(\,\varepsilon_{C_0}=\eta(S_{j-1})\ \text{ and }\ \sum_{C\in B_j\backslash\acc{C_0}}{\varepsilon_C}=0\,\Bigg)\\
&\ =\ \demi\binom{\abs{B_j}-1}{\big(\abs{B_j}-1\big)/2}\frac{1}{2^{\abs{B_j}-1}}
\ \geqslant\ \frac{K_2}{2n}\,.
\end{align*}
This proves the induction step, and thus the lower bound~(\ref{rec_local_limite}) holds for every~$j\in\acc{0,\,\ldots,\,N}$, which implies in particular that
\begin{equation}
\label{S0_lambda}
\Proba\Big(\,\abs{S_N} \leqslant N\,\Big)
\ \geqslant\ \left(\frac{K_2}{2n}\right)^{N}\,.
\end{equation}
We will now use the variables~$\varepsilon_C$ for~$C\in \Cun$, which we had shelved aside, to compensate~$S_N$ and thus attain a magnetization exactly equal to~$b_n$. This is only possible if~$S_N$ has the same parity as the number~$N$ of unit clusters in~$\Cun$. Let us check that this is indeed the case, by writing
\begin{multline*}
S_N\ =\ \sum_{C\in\Clusdec\backslash(\Czero\cup\Cun)}{\abs{C}\varepsilon_C}
\ \equiv\ \sum_{C\in\Clusdec}{\abs{C}}+\sum_{C\in \Czero}{\abs{C}}+\abs{\Cun}\quad \modulo{2}\\
\ =\ \abs{\Lambda(n)}-\abs{\Mn}+\sum_{C\in \Czero}{\abs{C}}+N
\ =\ n^2-b_n+N\ \equiv\ N\quad\modulo{2}\,,
\end{multline*}
where we have used the fact that~$\omega\in\mathcal{S}_n$ and equation~(\ref{bn_mod2}) which ensures that~$b_n\equiv n^2\ \modulo{2}$. Hence, thanks to the precaution we took when defining~$b_n$,~$N-S_N$ is always even.
Thus, for every~$s\in\acc{0,\,\ldots,\,N}$, we write
$$\Proba\Bigg(\,\sum_{C\in \Cun}{\varepsilon_C}=-S_N\ \Bigg|\ S_N=N-2s\,\Bigg)
\ =\ \binom{N}{s}\frac{1}{2^N}
\ \geqslant\ \frac{1}{2^N}\,.$$
This being true for every~$s\in\acc{0,\,\ldots,\,N}$, it follows that
$$\Proba\Bigg(\,\sum_{C\in \Cun}{\varepsilon_C}=-S_N\ \Bigg|\ \abs{S_N}\leqslant N\,\Bigg)
\ \geqslant\ \frac{1}{2^N}\,.$$
From this we deduce, using~(\ref{S0_lambda}), that
$$\Proba\Bigg(\,S_N+\sum_{C\in\Cun}{\varepsilon_C}=0\,\Bigg)
\ \geqslant\ \frac{1}{2^N}\times\left(\frac{K_2}{2n}\right)^{N}
\ =\ \left(\frac{K_2}{4n}\right)^N\,.$$
Combining this with equation~(\ref{Ising_Tn_cond}), we obtain that
$$\Proba\Big(\,\mathcal{T}_n\ \Big|\ \omega=\omega_0\,\Big)
\ \geqslant\ \frac{1}{2^{\abs{\Czero}}}\left(\frac{K_2}{4n}\right)^N
\ \geqslant\ \left(\demi\right)^{2Kn^{a/2}}\left(\frac{K_2}{4n}\right)^{n^{33/2-8a}}\,.$$
This being true for all the configurations~$\correct{\omega_0\in\mathcal{S}_n}$, it follows that
$$\Proba\big(\mathcal{T}_n\,\big|\,\mathcal{S}_n\big)
\ \geqslant\ \left(\frac{K_2}{8n}\right)^{2Kn^{\rho}}
\qquadou
\rho\ =\ \max\left(\frac{a}{2},\ \frac{33}{2}-8a\right)\,.$$
We deduce that
\begin{equation}
\label{PGD_cond}
\frac{\ln\Proba\big(\mathcal{T}_n\,\big|\,\mathcal{S}_n\big)}{(\ln n)n^{\rho}}\ \geqslant\ \frac{2K(\ln K_2-\ln 8)}{\ln n}-2K\ \cvninfty\ -2K\,.
\end{equation}
To conclude the proof, it remains only to condition on~$\mathcal{S}_n$ in~(\ref{lien_Tn_Zn}), and to use~(\ref{PGD_cond}) and the lower bound on~\smash{$\Probapn\big(\mathcal{S}_n\big)$} derived in lemma~\ref{Mn_FK_lemme_minoration_Z_n}.
\end{proof}

\section{Conclusion}

We summarize here how to combine the different estimates to obtain our result.

\begin{proof}[Proof of theorem~\ref{Ising_thm_CV_avec_dep_Messikh}]
\correct{Let~$a\in(31/16,2)$ such that~$\mathcal{FSS}(16-8a)$ holds. Given that~$a<2$, the lower bound on~$Z_n$ obtained in lemma~\ref{Ising_lemme_minoration_Z_n} implies that}
$$\liminfn\,\frac{\ln Z_n}{n}\ \geqslant\ 0\,.$$
\correct{Plugging this into the computation~(\ref{outline_majo_surcritique}) and using lemma~\ref{Ising_majo_surcritique}, we get}
\begin{multline*}
\forall\varepsilon>0\qquad
\limsupn\,\frac{1}{n}\,\ln\mu_n\Big(\,T_n\geqslant T_c+\varepsilon\,\Big)\\
\ \leqslant\ \limsupn\,\frac{1}{n}\,\sup_{T\geqslant T_c+\varepsilon}\,\ln\mu_{n,\,T}^+\Big(\,\abs{m}\geqslant n^a\sqrt{T_c+\varepsilon}\,\Big)\ <\ 0\,.
\end{multline*}
\correct{The control of~\smash{$\mu_n\big(T_n\leqslant T_c-\varepsilon\big)$} is obtained similarly, using lemma~\ref{Ising_majo_souscritique}.}
\end{proof}

\bibliographystyle{alpha}
\bibliography{Ising2DSOC}

\end{document}